\newtheorem{theorem}{Theorem}[section]
\newtheorem{lemma}[theorem]{Lemma}
\newtheorem{corollary}[theorem]{Corollary}
\newtheorem{statement}[theorem]{Statement}
\numberwithin{equation}{section}
\newcommand{\R}{\mathbb{R}}
\newcommand{\rb}{\right)}
\newcommand{\PD}{\partial}
\newcommand{\wt}{\widetilde}
\newcommand{\Sc}{\mathcal{S}}
\newcommand{\Cb}{\mathbb{C}}
\newcommand{\Rb}{\mathbb{R}}
\newcommand{\Sb}{\mathbb{S}}
\newcommand{\lb}{\left(}
\newcommand{\Beq}{\begin{equation}}
\newcommand{\Eeq}{\end{equation}}
\newcommand{\beq}{\begin{equation*}}
\newcommand{\eeq}{\end{equation*}}
\newcommand{\bal}{\begin{align}}
\newcommand{\eal}{\end{align}}
\newcommand{\bp}{\begin{prob}}
	\newcommand{\ep}{\end{prob}}
\newcommand{\bpr}{\begin{proof}}
	\newcommand{\epr}{\end{proof}}
\newcommand{\vp}{\varphi}
\newcommand*\C{\mathbb{C}}
\def\tbl{\textcolor{blue}}
\def\l{\langle}
\def\r{\rangle}
\title[Range Characterization of Momentum ray transforms]{Momentum Ray Transforms, II: Range Characterization In the Schwartz space}
\author[V.\ P.\ Krishnan, R.\ Manna, S.\ K.\ Sahoo and V.\ A.\ Sharafutdinov]{Venkateswaran P.\ Krishnan$^\ast$$^\dagger$, Ramesh Manna$^\dagger$, Suman Kumar Sahoo$^\dagger$ and Vladimir A.\ Sharafutdinov$^\sharp$}
	\subjclass{Primary: 44A12, 65R32; Secondary: 46F12.}
	\keywords{Ray transform, Momentum ray transform, John's conditions, range characterization, inverse problems, tensor analysis.}
	\thanks{The first author was supported by US NSF grant DMS 1616564 and SERB Matrics Grant,
MTR/2017/000837.}
	\thanks{The second author was supported by SERB National Postdoctoral fellowship, PDF/2017/002780.}
		\thanks{The last author was supported by RFBR Grant ??-??-??????.}
	\email{vkrishnan@tifrbng.res.in,ramesh@tifrbng.res.in,suman@tifrbng.res.in,\newline\indent sharaf@math.nsc.ru}
	\address{$^\ast$Corresponding author
		\newline\indent$^\dagger$TIFR Centre for Applicable Mathematics, Sharada Nagar, Chikkabommasandra,\newline\indent\hspace{0mm} Yelahanka New Town, Bangalore, India
		\newline\indent$^\sharp$Sobolev Institute of Mathematics; 4 Koptyug Avenue, Novosibirsk, 630090, Russia;
		\newline\indent\hspace{0mm} Novosibirsk State University, 2 Pirogov street, 630090, Russia}
\begin{document}

\begin{abstract}
The momentum ray transform $I^k$ integrates a rank $m$ symmetric tensor field $f$ over lines of ${\R}^n$ with the weight $t^k$:
$
(I^k\!f)(x,\xi)=\int_{-\infty}^\infty t^k\l f(x+t\xi),\xi^m\r\,dt.
$
We give the range characterization for the operator $f\mapsto(I^0\!f,I^1\!f,\dots, I^m\!f)$ on the Schwartz space of rank $m$ smooth fast decaying tensor fields.
In dimensions $n\ge3$, the range is characterized by certain differential equations of order $2(m+1)$ which generalize the classical John equations. In the two-dimensional case, the range is characterized by certain integral conditions which generalize the classical Gelfand -- Helgason -- Ludwig conditions.
\end{abstract}

\maketitle

\section{Introduction}

Starting with the classical paper \cite{J} by F. John, the range characterization for many integral geometry operators is the traditional subject of Integral Geometry.

Let ${\mathcal S}({\R}^n)$ be the Schwartz space of smooth functions rapidly decaying at infinity together with all derivatives (we use the term {\it smooth} as the synonym of {\it $C^\infty$-smooth}). John considers the operator
\begin{equation}
I:{\mathcal S}({\R}^3)\rightarrow C^\infty({\R}^4)
	                                       \label{Eq1.1}
\end{equation}
that integrates a function $f$ over non-horizontal lines
$$
(If)(x_1,x_2,\alpha_1,\alpha_2)=\int\limits_{-\infty}^\infty f(x_1+\alpha_1t,x_2+\alpha_2t,t)\,dt.
$$
The operator \eqref{Eq1.1} and its different generalizations are called {\it ray transforms} (the name {\it X-ray transform} is also widely used).
John proves that a function $\varphi\in C^\infty({\R}^4),\ \varphi=\varphi(x,\alpha)$ belongs to the range of the operator \eqref{Eq1.1} if and only if it fast decays in $x$ and satisfies the second order differential equation
\begin{equation}
\frac{\partial^2\varphi}{\partial x_1\partial\alpha_2}-\frac{\partial^2\varphi}{\partial x_2\partial\alpha_1}=0.
	                                       \label{Eq1.2}
\end{equation}

It is convenient to parameterize the family of oriented lines in ${\mathbb R}^n$ by points of the manifold
$$
T{\mathbb S}^{n-1}=\{(x,\xi)\in{\mathbb R}^n\times{\mathbb R}^n\mid |\xi|=1,\langle x,\xi\rangle=0\}\subset{\mathbb R}^n\times{\mathbb R}^n\tbl{,}
$$
that is, by the tangent bundle of the unit sphere ${\mathbb S}^{n-1}$. Namely, a point $(x,\xi)\in T{\mathbb S}^{n-1}$ determines the line $\{x+t\xi\mid t\in{\mathbb R}\}$. Hereafter $\l\cdot,\cdot\r$ is the standard dot-product on ${\R}^n$ and $|\cdot|$ is the corresponding norm.
Observe that the Schwartz space ${\mathcal S}(E)$ is well defined for a smooth vector bundle $E\rightarrow M$ over a compact manifold $M$. In particular, the space ${\mathcal S}(T{\mathbb S}^{n-1})$ is well defined. The {\it ray transform}
\begin{equation}
I:{\mathcal S}({\R}^n)\rightarrow {\mathcal S}(T{\mathbb S}^{n-1})
	                                       \label{Eq1.3}
\end{equation}
is the linear continuous operator defined by
$$
(If)(x,\xi)=\int\limits_{-\infty}^\infty f(x+t\xi)\,dt.
$$
John's result was generalized to any dimension $n\ge3$ by S. Helgason \cite{Hb}. Instead of \eqref{Eq1.2}, a system of second order differential equations of the same structure appears in the range characterization for the operator \eqref{Eq1.3}.  We do not present the precise statement since it is covered by Theorem \ref{Th1.1} below.

Let $S^{m}\Rb^{n}$ be the complex vector space of rank $m$ symmetric tensors on $\Rb^n$. The dimension of $S^{m}\Rb^{n}$ is ${n+m-1\choose m}$.
In particular, $S^0\Rb^{n}=\Cb$ and $S^1\Rb^{n}={\Cb}^n$.
Let
$\Sc(\Rb^{n}; S^{m}\Rb^{n})$ be the Schwartz space of $S^{m}\Rb^{n}$-valued functions that are called {\it rank $m$ smooth fast decaying symmetric tensor fields} on ${\Rb}^n$. The {\it ray transform} is the linear continuous operator
\begin{equation}
I: \Sc(\Rb^{n}; S^{m}\Rb^{n})\to \Sc (T\Sb^{n-1})
                                   \label{Eq1.4}
\end{equation}
that is defined, for $f=(f_{i_1\dots i_m})\in\Sc(\Rb^{n}; S^{m}\Rb^{n})$, by
\begin{equation}
 (If) (x,\xi)=\int\limits_{-\infty}^\infty f_{i_1\dots i_m}(x+t\xi)\,\xi^{i_1}\dots\xi^{i_m}\,d t=\int\limits_{-\infty}^\infty \l f(x+t\xi),\xi^m\r\,d t\quad\big((x,\xi)\in T{\mathbb S}^{n-1}\big).
                                   \label{Eq1.5}
\end{equation}
We use the Einstein summation rule: the summation from 1 to $n$ is assumed over every index repeated in lower and upper positions in a monomial.
To adopt our formulas to the Einstein summation rule, we use either lower or upper indices for denoting coordinates of vectors and tensors. For instance, $\xi^i=\xi_i$ in \eqref{Eq1.5}. There is no difference between co- and contravariant tensors since we use Cartesian coordinates only.

Observe that the integral \eqref{Eq1.5} makes sense for arbitrary $x\in{\R}^n$ and $0\neq\xi\in{\R}^n$. We define the operator
\begin{equation}
J: \Sc(\Rb^{n}; S^{m}\Rb^{n})\to C^\infty\big({\R}^n\times({\R}^n\setminus\{0\})\big)
                                   \label{Eq1.6}
\end{equation}
by the same formula
\begin{equation}
 (Jf) (x,\xi)=\int\limits_{-\infty}^\infty \l f(x+t\xi),\xi^m\r\,d t\quad\big((x,\xi)\in {\R}^n\times({\R}^n\setminus\{0\})\big).
                                   \label{Eq1.7}
\end{equation}
For a tensor field $f\in{\mathcal S}({\R}^n;S^m{\R}^n)$, the function $\varphi=If$ is recovered from $\psi=Jf$ by $\varphi=\psi|_{T{\mathbb S}^{n-1}}$. On the other hand, $\psi$ can be recovered from $\varphi$. Indeed, as immediately follows from \eqref{Eq1.7}, the function $\psi=Jf$ possesses the following homogeneity in the second argument
$$
\psi(x,t\xi)=\frac{t^m}{|t|}\psi(x,\xi)\quad(0\neq t\in{\R})
$$
and has the following property in the first argument
$$
\psi(x+t\xi,\xi)=\psi(x,\xi)\quad(t\in{\R}).
$$
This implies that
\begin{equation}
\psi(x,\xi)=|\xi|^{m-1}\varphi\Big(x-\frac{\l\xi,x\r}{|\xi|^2}\xi,\frac{\xi}{|\xi|}\Big).
                                         \label{Eq1.8}
\end{equation}
Thus the functions $\varphi=If$ and $\psi=Jf$ give the same information for a tensor field $f$. Therefore the operator \eqref{Eq1.6} is also called the ray transform. The function $\psi=Jf$ is sometimes more convenient since the partial derivatives $\frac{\partial\psi}{\partial x^i}$ and $\frac{\partial\psi}{\partial\xi^i}$ are well defined.

The range characterization for the operator \eqref{Eq1.4} was obtained by V.\ Sharafutdinov. Let us cite Theorem 2.10.1 of \cite{Sh}.

\begin{theorem}  \label{Th1.1}
A function $\varphi\in{\mathcal S}(T{\mathbb S}^{n-1})\ (n\geq3)$ belongs to the range of the operator \eqref{Eq1.4} if and only if the following two conditions hold:
\begin{enumerate}
\item[(1)] $\varphi(x,-\xi)=(-1)^m\varphi(x,\xi)$;

\item[(2)] the function $\psi\in C^\infty\big({\mathbb R}^n\times({\mathbb R}^n\setminus\{0\})\big)$, defined by \eqref{Eq1.8}, satisfies the
equations
\begin{equation}
\Big(\frac{\partial^2}{\partial x^{i_1}\partial\xi^{j_1}}-\frac{\partial^2}{\partial x^{j_1}\partial\xi^{i_1}}\Big)\dots
\Big(\frac{\partial^2}{\partial x^{i_{m+1}}\partial\xi^{j_{m+1}}}-\frac{\partial^2}{\partial x^{j_{m+1}}\partial\xi^{i_{m+1}}}\Big)\psi=0
                                                   \label{Eq1.9}
\end{equation}
for all indices $1\leq i_1,j_1,\dots,i_{m+1},j_{m+1}\leq n$.
\end{enumerate}
\end{theorem}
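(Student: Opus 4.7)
The plan is to separate the necessity and sufficiency parts, handling necessity by a direct induction on the number of antisymmetric operators applied, and sufficiency by a Fourier slice argument.

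\emph{Necessity.} Condition (1) is immediate from $t\mapsto -t$ in \eqref{Eq1.5}. For (2), set $D_{ij}:=\partial_{x^i}\partial_{\xi^j}-\partial_{x^j}\partial_{\xi^i}$. A direct computation gives
\[
D_{ij}Jf(x,\xi)= m\int_{-\infty}^\infty\bigl[\partial_if_{j\,k_2\cdots k_m}-\partial_jf_{i\,k_2\cdots k_m}\bigr](x+t\xi)\,\xi^{k_2}\cdots \xi^{k_m}\,\d t,
\]
because the ``$t$-weighted'' contribution $\int t\,\partial_i\partial_j f_{k_1\cdots k_m}(x+t\xi)\xi^{k_1}\cdots \xi^{k_m}\,\d t$ is symmetric in $(i,j)$ and cancels. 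Thus each application of a $D$ consumes one $\xi$-factor while recording one extra derivative of $f$; after $m$ applications the integrand is free of $\xi$'s, and one further $D$ yields zero by the same symmetry argument. This proves \eqref{Eq1.9}.

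\emph{Sufficiency.} Let $\varphi\in\mathcal{S}(T\Sb^{n-1})$ satisfy (1) and (2), with $\psi$ its extension via \eqref{Eq1.8}. The idea is to Fourier-invert along each hyperplane $\xi^\perp$ and assemble the result into a single tensor field. For $|\xi|=1$ and $\eta\in\xi^\perp$, set
\[
\widetilde\varphi(\eta,\xi) := \int_{\xi^\perp} e^{-\I\l x,\eta\r}\varphi(x,\xi)\,\d x.
\]
The projection slice theorem asserts that, if $\varphi=If$, then $\widetilde\varphi(\eta,\xi)=\hat f_{i_1\cdots i_m}(\eta)\xi^{i_1}\cdots \xi^{i_m}$ whenever $\l\eta,\xi\r=0$. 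My target is therefore to reverse-engineer a Schwartz symmetric tensor field $\hat F(\eta)$ on $\Rb^n_\eta$ from the data $\widetilde\varphi$ and set $f:=\mathcal{F}^{-1}\hat F$.

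The key translation is that \eqref{Eq1.9}, pushed through the Fourier transform in $x$, becomes the statement that the function $\xi\mapsto \widetilde\varphi(\eta,\xi)$ on the punctured hyperplane $\eta^\perp\setminus\{0\}$ (with the homogeneity and parity prescribed by \eqref{Eq1.8} and (1)) is the restriction of a homogeneous polynomial of degree $m$ on $\Rb^n$. Its coefficients pin down $\hat F(\eta)$ modulo a longitudinal ambiguity $\eta\odot(\,\cdot\,)$; fixing the canonical $\eta$-solenoidal representative yields $\hat F$ smoothly on $\Rb^n\setminus\{0\}$. The principal obstacle, and the place where the hypothesis $n\ge 3$ enters essentially, is showing that $\hat F$ extends smoothly through $\eta=0$ with Schwartz decay at infinity: for $n\ge 3$ any two hyperplanes $\eta_1^\perp,\eta_2^\perp$ meet in codimension $2$, supplying enough overlap to glue the polynomial coefficients on neighbouring slices consistently, and a Hadamard-type lemma invoking the order of vanishing guaranteed by the $2(m+1)$-order equations \eqref{Eq1.9} then delivers regularity at the origin. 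Once $\hat F$ is constructed, Fourier inversion produces the required $f\in\mathcal{S}(\Rb^n;S^m\Rb^n)$, and the identity $If=\varphi$ follows by retracing the projection slice calculation.
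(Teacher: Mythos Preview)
The paper does not give its own proof of Theorem~\ref{Th1.1}; it is quoted from \cite[Theorem~2.10.1]{Sh}. So there is no in-paper argument to compare against for this statement. What the paper \emph{does} prove is Theorem~\ref{Th1.3}, and in doing so it appeals to Theorem~\ref{Th1.1} only in the scalar case $m=0$ (the Helgason result). With that caveat, here is an assessment of your proposal on its own terms.

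\medskip
\textbf{Necessity.} Your argument is correct and is exactly the induction carried out in Lemma~\ref{L2.4} of the paper: one application of $J_{ij}$ lowers the tensor rank in the integrand by one, and after $m+1$ applications you are left with zero. Nothing to add.

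\medskip
\textbf{Sufficiency.} What you have written is a plausible outline, and the Fourier-slice viewpoint is indeed the backbone of the argument in \cite{Sh}, but the two places where the real work lies are precisely the two places you wave through.

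First, the assertion that the John equations, transported by the $x$-Fourier transform, force $\xi\mapsto\widetilde\varphi(\eta,\xi)$ to be the restriction to $\eta^\perp\cap\mathbb S^{n-1}$ of a homogeneous polynomial of degree $m$ is not a free fact. After Fourier-transforming in $x$, each $J_{ij}$ becomes (up to $\I$) the infinitesimal rotation $\eta_i\partial_{\xi^j}-\eta_j\partial_{\xi^i}$, and the system \eqref{Eq1.9} says a certain iterated product of these annihilates $\hat\psi(\eta,\cdot)$. Passing from that differential statement on $\mathbb R^n\setminus\{0\}$ (with the prescribed homogeneity and parity) to ``restriction of a degree-$m$ polynomial on $\eta^\perp$'' requires an actual argument; merely invoking the operators does not do it. You should state and prove the lemma that performs this step.

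Second, and more seriously, the phrase ``a Hadamard-type lemma invoking the order of vanishing guaranteed by the $2(m+1)$-order equations then delivers regularity at the origin'' is not a proof. This is the hard point of the whole theorem: the slice-by-slice construction gives a candidate $\hat F$ on $\mathbb R^n\setminus\{0\}$ that is smooth and homogeneous of the correct degree, but turning that into a genuine element of $\mathcal S(\mathbb R^n;S^m\mathbb R^n)$ after inverse Fourier requires controlling $\hat F$ across $\eta=0$. The John equations do not directly give an ``order of vanishing'' at $\eta=0$; they are equations in the $(x,\xi)$ variables, and after Fourier they constrain the $\xi$-behaviour for each fixed $\eta\neq0$. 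You need a separate argument (in \cite{Sh} this is done with care) to show that the polynomial coefficients extracted on each slice patch into something smooth through the origin with Schwartz decay. The remark that $n\ge3$ makes any two hyperplanes $\eta_1^\perp,\eta_2^\perp$ intersect nontrivially is relevant to global consistency of the construction, but it does not by itself supply regularity at $\eta=0$.

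In short: your necessity is complete; your sufficiency is a correct strategic sketch with the two decisive lemmas missing.
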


We call \eqref{Eq1.9} the {\it John equations} and the differential operators
\begin{equation}
J_{ij}=\frac{\partial^2}{\partial x^i\partial\xi^j}-\frac{\partial^2}{\partial x^j\partial\xi^i}:
C^\infty({\R}^n\times{\R}^n)\rightarrow C^\infty({\R}^n\times{\R}^n)
                                                   \label{Eq1.10}
\end{equation}
the {John operators}. In the case of $(m,n)=(0,3)$, Theorem \ref{Th1.1} is equivalent to John's result. In the case of $m=0$, Theorem \ref{Th1.1} was proved by Helgason \cite{Hb}. Just the latter case is used in the current paper. Nevertheless, we have presented the statement in the most generality since it is interesting to compare Theorem \ref{Th1.1} with our main result, Theorem \ref{Th1.3} below.

The system \eqref{Eq1.9} is worth studying by itself. How many linearly independent equations are contained in the system? What is the geometric sense of the system? The present paper does not discuss such questions. See recent papers \cite{D} and \cite{NSV} related to these questions.

Theorem \ref{Th1.1} is definitely false in the case of $n=2$. More precisely, for a tensor field $f\in{\mathcal S}({\R}^2;S^m{\R}^2)$, the John equations \eqref{Eq1.9} are still satisfied by the function $\psi=Jf$; but the John equations are not sufficient for the existence of a tensor field $f\in{\mathcal S}({\R}^2;S^m{\R}^2)$ such that $\psi=Jf$.
Observe that, in the case of $(m,n)=(0,2)$, the operator \eqref{Eq1.4}, which in this case is the same as operator \eqref{Eq1.3} coincides, up to notation,
with the Radon transform on the plane. Unlike \eqref{Eq1.9}, the corresponding consistency conditions for the Radon transform are of integral nature, see
\cite[Chapter 1, Theorem 2.4]{Hb}. These conditions are named the {\it Helgason -- Ludwig conditions}. However, they were, in fact, first written down by I.~Gelfand et. al. \cite[Section 1.6]{G}. The situation is quite similar for tensor fields. Let us cite the result belonging to E.~Pantjukhina \cite{P}.

\begin{theorem} \label{Th1.2}
Let $n\ge2$ and $m\ge0$. If a function $\varphi\in{\mathcal S}(T{\mathbb S}^{n-1})$ belongs to the range of the operator \eqref{Eq1.4}, then

{\rm (1)} $\varphi(x,-\xi)=(-1)^m\varphi(x,\xi)$;

{\rm (2)} for every integer $r\ge0$, there exist homogeneous polynomials $P^r_{i_1\dots i_m}(x)$ of degree $r$ on ${\R}^n$ such that
$$
\int\limits_{\xi^\bot}\varphi(x',\xi)\l x,x'\r^r\,dx'=P^r_{i_1\dots i_m}(x)\xi^{i_1}\dots\xi^{i_m}\quad \big((x,\xi)\in T{\mathbb S}^{n-1}\big),
$$
where $dx'$ is the $(n-1)$-dimensional Lebesgue measure on the hyperplane $\xi^\bot=\{x'\in{\R}^n\mid\l\xi,x'\r=0\}$.

In the case of $n=2$, the converse statement is true: If a function $\varphi\in{\mathcal S}(T{\mathbb S}^1)$ satisfies {\rm (1)} and {\rm (2)} then there exists a tensor field $f\in{\mathcal S}({\R}^2;S^m{\R}^2)$ such that $\varphi=If$.
\end{theorem}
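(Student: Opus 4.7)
Both conditions should follow by direct calculation from the definition \eqref{Eq1.5}. For (1), substituting $t\mapsto -t$ yields the factor $(-\xi)^{i_1}\cdots(-\xi)^{i_m}=(-1)^m\xi^{i_1}\cdots\xi^{i_m}$, hence $(If)(x,-\xi)=(-1)^m(If)(x,\xi)$. For (2), the plan is to perform the change of variables $(x',t)\mapsto y=x'+t\xi$ from $\xi^\perp\times\R$ onto $\R^n$ (Jacobian $1$, inverse $x'=y-\langle y,\xi\rangle\xi$). Since $\langle x,\xi\rangle=0$ on $T\mathbb S^{n-1}$, the identity $\langle x,x'\rangle=\langle x,y\rangle$ holds, and the double integral collapses to $\xi^{i_1}\cdots\xi^{i_m}\int_{\R^n}f_{i_1\dots i_m}(y)\langle x,y\rangle^r\,dy$; the inner integral is a homogeneous polynomial of degree $r$ in $x$ (symmetric in the tensor indices by symmetry of $f$), yielding the desired $P^r_{i_1\dots i_m}(x)$.

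\textbf{Sufficiency, $n=2$: Fourier-slice reformulation.} The plan is to parameterize $T\mathbb S^1$ by $(s,\theta)\in\R\times\mathbb S^1$ with $\xi(\theta)=(\cos\theta,\sin\theta)$ and $\xi^\perp(\theta)=(-\sin\theta,\cos\theta)$, to take the Fourier transform of $\varphi$ in $s$, and to use the tensor Fourier-slice identity $\hat\varphi(\sigma,\theta)=\hat f_{i_1\dots i_m}(\sigma\xi^\perp)\,\xi^{i_1}\cdots\xi^{i_m}$. Writing $\eta=\sigma\xi^\perp\in\R^2$, so that $\xi=\eta^\perp/|\eta|$ for a suitable perpendicular vector $\eta^\perp$ with $|\eta^\perp|=|\eta|$, and multiplying by $|\eta|^m$ puts this in the scalar form
\[
\hat f_{i_1\dots i_m}(\eta)(\eta^\perp)^{i_1}\cdots(\eta^\perp)^{i_m}=\Phi(\eta),\qquad \Phi(\eta):=|\eta|^m\hat\varphi(|\eta|,\theta(\eta)).
\]
This is a single scalar equation per $\eta$ on the $(m+1)$-dimensional fiber $S^m\R^2$; its $m$-dimensional kernel corresponds to the potential fields $dh$ (i.e., to $\ker I$), so pointwise solvability off the origin is automatic. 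The genuine challenge will be to choose $\hat f$ globally smooth, hence Schwartz, across $\eta=0$.

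\textbf{Smoothness at the origin via Borel's lemma.} Formally, $\hat\varphi(\sigma,\theta)\sim\sum_{r\ge0}\mu_r(\theta)(-i\sigma)^r/r!$ with $\mu_r(\theta):=\int_{\R} s^r\varphi(s,\theta)\,ds$, and condition (2) can be rewritten in the form $\mu_r(\theta)|\eta|^{r+m}=P^r_{i_1\dots i_m}(\eta)(\eta^\perp)^{i_1}\cdots(\eta^\perp)^{i_m}$, a homogeneous polynomial of degree $r+m$ in $\eta$. Hence $\Phi$ admits the formal Taylor expansion
\[
\Phi(\eta)\sim\sum_{r\ge0}\frac{(-i)^r}{r!}P^r_{i_1\dots i_m}(\eta)(\eta^\perp)^{i_1}\cdots(\eta^\perp)^{i_m}
\]
at $0$; together with Schwartz decay of $\hat\varphi$ in $\sigma$, this gives $\Phi\in\Sc(\R^2)$. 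Borel's lemma delivers $\hat f^{(0)}\in\Sc(\R^2;S^m\R^2)$ whose Taylor coefficients at $0$ are $(-i)^rP^r_{i_1\dots i_m}/r!$, so the scalar $\Delta(\eta):=\Phi(\eta)-\hat f^{(0)}_{i_1\dots i_m}(\eta)(\eta^\perp)^{i_1}\cdots(\eta^\perp)^{i_m}$ is smooth and flat at the origin. Setting
\[
\hat f_{i_1\dots i_m}(\eta):=\hat f^{(0)}_{i_1\dots i_m}(\eta)+\frac{\Delta(\eta)}{|\eta|^{2m}}(\eta^\perp)_{i_1}\cdots(\eta^\perp)_{i_m}\quad(\eta\ne0),\qquad \hat f(0):=\hat f^{(0)}(0),
\]
the flatness of $\Delta$ at $0$ cancels the $|\eta|^{-2m}$ singularity, so $\hat f$ is globally smooth and Schwartz; a direct check using the contraction identity $(\eta^\perp)^{i_1}\cdots(\eta^\perp)^{i_m}(\eta^\perp)_{i_1}\cdots(\eta^\perp)_{i_m}=|\eta|^{2m}$ verifies the Fourier-slice identity. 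Inverse Fourier transforming then produces $f\in\Sc(\R^2;S^m\R^2)$ with $If=\varphi$. The only delicate point in the whole argument is smoothness at $\eta=0$, and condition (2) is exactly what supplies the polynomial Taylor expansion required to apply Borel's lemma and to make the correction term smooth.
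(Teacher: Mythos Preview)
The paper does not prove Theorem~\ref{Th1.2}; it merely cites it from Pantjukhina~\cite{P} and uses it as a black box in the proof of Theorem~\ref{Th1.4}. So there is no ``paper's own proof'' to compare against. That said, your outline follows the classical route (Fourier--slice plus moment/Borel matching) that is standard for this type of range theorem, and the necessity part is entirely correct.

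For the sufficiency, your strategy is sound, but two points deserve more care. First, your definition $\Phi(\eta)=|\eta|^m\hat\varphi(|\eta|,\theta(\eta))$ only reads $\hat\varphi(\sigma,\cdot)$ for $\sigma\ge0$; to conclude $If=\varphi$ from the scalar identity $\hat f_{i_1\dots i_m}(\eta)(\eta^\perp)^{i_1}\cdots(\eta^\perp)^{i_m}=\Phi(\eta)$ you must check consistency for $\sigma<0$, and this is exactly where the parity hypothesis~(1) enters (via $\hat\varphi(-\sigma,\theta)=(-1)^m\hat\varphi(\sigma,\theta+\pi)$). You invoke~(1) nowhere in the sufficiency argument, so as written the proof is incomplete. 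Second, the assertion ``this gives $\Phi\in\mathcal S(\R^2)$'' is the heart of the matter and needs a genuine argument: you must show that the Taylor remainders of $\hat\varphi$ in $\sigma$, after multiplication by $|\eta|^m$ and passage to Cartesian coordinates, are $O(|\eta|^N)$ \emph{together with all derivatives}. This follows from the smoothness of $\hat\varphi$ on $\R\times\mathbb S^1$ and the fact that each Taylor coefficient becomes a polynomial in $\eta$ thanks to condition~(2), but it is not automatic from a formal series alone. Once these two points are made explicit, the Borel-plus-flat-correction construction you describe is correct and yields the desired $f$.
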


This statement will be used in our proof of Theorem \ref{Th1.4} below.

\bigskip

Now we introduce the subject of the current paper and present our main results.

The {\it momentum ray transforms}
$$
I^k:{\mathcal S}({\R}^n;S^m{\R}^n)\rightarrow{\mathcal S}(T{\mathbb S}^{n-1})
$$
are defined for $k=0,1,\dots$ as follows:
\begin{equation}
(I^k\!f)(x,\xi)=\int\limits_{-\infty}^\infty t^k\l f(x+t\xi),\xi^m\r\,dt\quad\big((x,\xi)\in T{\mathbb S}^{n-1}\big).
                                   \label{Eq1.10a}
\end{equation}
In particular, $I^0=I$. A rank $m$ symmetric tensor field $f$ is uniquely determined by the functions $(I^0\!f,I^1\!f,\dots, I^m\!f)$. The inversion algorithm is presented in \cite{KMSS}.

Quite similarly to \eqref{Eq1.6}, we introduce the operators
$$
J^k: \Sc(\Rb^{n}; S^{m}\Rb^{n})\to C^\infty\big({\R}^n\times({\R}^n\setminus\{0\})\big)
$$
by
\begin{equation}
 (J^kf) (x,\xi)=\int\limits_{-\infty}^\infty t^k\,\l f(x+t\xi),\xi^m\r\, d t\quad\big((x,\xi)\in {\R}^n\times({\R}^n\setminus\{0\})\big).
                                   \label{Eq1.11}
\end{equation}
For a tensor field $f\in{\mathcal S}({\R}^n;S^m{\R}^n)$, the function $\varphi^k=I^kf$ is recovered from $\psi^k=J^kf$ by $\varphi^k=\psi^k|_{T{\mathbb S}^{n-1}}$. On the other hand, $\psi^k$ can be recovered from $(\varphi^0,\dots,\varphi^k)$. Indeed, as immediately follows from \eqref{Eq1.11}, the functions $\psi^k=J^kf$ possess the following homogeneity in the second argument
\begin{equation}
\psi^k(x,t\xi)=\frac{t^{m-k}}{|t|}\psi^k(x,\xi)\quad(0\neq t\in{\R})
                                         \label{Eq1.12}
\end{equation}
and have the following property in the first argument
\begin{equation}
\psi^k(x+t\xi,\xi)=\sum\limits_{\ell=0}^k{k\choose\ell}(-t)^{k-\ell}\psi^\ell(x,\xi)\quad(t\in{\R}).
                                         \label{Eq1.13}
\end{equation}
The two previous formulas imply
\begin{equation}
\psi^k(x,\xi)=|\xi|^{m-2k-1}\sum\limits_{\ell=0}^k(-1)^{k-\ell}{k\choose\ell}|\xi|^\ell\l\xi,x\r^{k-\ell}\,\varphi^\ell
\Big(x-\frac{\l\xi,x\r}{|\xi|^2}\xi,\frac{\xi}{|\xi|}\Big).
                                         \label{Eq1.14}
\end{equation}

Formulas \eqref{Eq1.13} and \eqref{Eq1.14} mean, in particular, that the operator $I^k$ must always be  considered together with lower order momenta $(I^0,\dots,I^{k-1})$, i.e., the data $(I^0f,\dots,I^kf)$ must always be used instead of $I^kf$.

\begin{theorem} \label{Th1.3}
Let $n\ge3$ and $m\ge0$. An $(m+1)$-tuple
$$
(\varphi^0,\dots,\varphi^m)\in\underbrace{{\mathcal S}(T{\mathbb S}^{n-1})\times\dots\times{\mathcal S}(T{\mathbb S}^{n-1})}_{m+1}
$$
belongs to the range of the operator
$$
{\mathcal S}({\R}^n;S^m{\R}^n)\rightarrow\underbrace{{\mathcal S}(T{\mathbb S}^{n-1})\times\dots\times{\mathcal S}(T{\mathbb S}^{n-1})}_{m+1},\quad
f\mapsto(I^0\!f,\dots,I^m\!f)
$$
if and only if the following two conditions are satisfied:\\
{\rm (1)} the functions possess the following evenness in the second argument:
\begin{equation}
\varphi^k(x,-\xi)=(-1)^{m-k}\varphi^k(x,\xi)\quad (0\le k\le m);
                                         \label{Eq1.14a}
\end{equation}
{\rm (2)} 
the function $\psi^m\in C^\infty\big({\R}^n\times({\R}^n\setminus\{0\})\big)$, defined by \eqref{Eq1.14} for $k=m$, satisfies the John conditions
\begin{equation}
J_{i_1j_1}\dots J_{i_{m+1}j_{m+1}}\psi^m=0\quad\mbox{\rm for all}\quad 1\le i_1,j_1,\dots i_{m+1},j_{m+1}\le n,
                                         \label{Eq1.15}
\end{equation}
where the John operators $J_{ij}$ are defined by \eqref{Eq1.10}.
\end{theorem}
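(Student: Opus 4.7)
The plan is to prove necessity and sufficiency separately, with sufficiency by induction on $m$.

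For necessity, the parity condition (1) follows immediately from the change of variable $t\to-t$ combined with $\xi\to-\xi$ in \eqref{Eq1.10a}. For the John equation (2), the key algebraic identity is $t\xi^i = (x+t\xi)^i-x^i$, which gives
$$
t^m\l f(y),\xi^m\r = \l f(y),(y-x)^m\r,\quad y=x+t\xi.
$$
Expanding $(y-x)^m$ binomially and using symmetry of $f$, I rewrite
$$
\psi^m(x,\xi) = \sum_{k=0}^m(-1)^k\binom{m}{k}x^{i_1}\cdots x^{i_k}\,\Phi^{(k)}_{i_1\cdots i_k}(x,\xi),
$$
where $\Phi^{(k)}_{i_1\cdots i_k}(x,\xi)=\int f_{i_1\cdots i_m}(x+t\xi)(x+t\xi)^{i_{k+1}}\cdots(x+t\xi)^{i_m}\md t$ is, for each fixed index tuple, the scalar ray transform of a scalar function, hence annihilated by every $J_{pq}$ by the classical John--Helgason theorem (Theorem \ref{Th1.1} with $m=0$). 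Since $[J_{pq},\partial_{\xi^r}]=0$ and each $J_{pq}$ strips one factor from the degree-$k$ polynomial $x^{i_1}\cdots x^{i_k}$, iterating $m+1$ times annihilates every term: the pure-$x$ polynomial is killed after at most $k\le m$ applications, and subsequent operators commute through the produced $\partial_\xi$'s to meet $J_{pq}\Phi^{(k)}=0$.

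For sufficiency, I proceed by induction on $m$, the base $m=0$ being Theorem \ref{Th1.1} with $m=0$. For the inductive step, a direct calculation from \eqref{Eq1.14} yields $(\xi\cdot\nabla_x)\psi^k = -k\psi^{k-1}$, so $\psi^0 = \tfrac{(-1)^m}{m!}(\xi\cdot\nabla_x)^m\psi^m$. Since $[J_{pq},\xi\cdot\nabla_x]=0$, the John condition on $\psi^m$ descends to $\psi^0$, and Theorem \ref{Th1.1} applied to $\varphi^0$ produces a rank-$m$ Schwartz field $f_0$ with $If_0=\varphi^0$. Next, the residual data $\chi^j:=-\tfrac{1}{j+1}(\varphi^{j+1}-I^{j+1}f_0)$ for $j=0,\dots,m-1$ has the correct parity for a rank-$(m-1)$ tensor, and a bookkeeping identity (using $\tfrac{1}{k}\binom{m-1}{k-1}=\tfrac{1}{m}\binom{m}{k}$) gives the crucial relation $\psi^{m-1}_\chi = -\tfrac{1}{m}(\psi^m-\psi^m_{f_0})$. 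After verifying the John equation of order $2m$ on $\psi^{m-1}_\chi$, the inductive hypothesis supplies $v\in\Sc(\Rb^n;S^{m-1}\Rb^n)$ with $I^jv=\chi^j$. Setting $f = f_0 + dv$ (with $d$ the symmetrized inner derivative) and using the integration-by-parts identity $I^k(dv)=-kI^{k-1}v$ for $k\ge1$ then yields $I^kf = \varphi^k$ for all $0\le k\le m$.

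The main obstacle will be upgrading the John order from $2(m+1)$ to $2m$ on $\psi^{m-1}_\chi$. Both $\psi^m$ (by hypothesis) and $\psi^m_{f_0}$ (by the already-established necessity) satisfy John of order $2(m+1)$, so the difference does too, but only $2(m+1)$, not $2m$. The extra vanishing must be extracted from the identity $\varphi^0-If_0=0$, which removes the $k=0$ term in the sum representing $\psi^m-\psi^m_{f_0}$ and allows a factor of $|\xi|$ to be pulled out. Turning this structural observation into an actual differential identity---i.e., showing that the missing $k=0$ term is precisely the obstruction to one John operator already annihilating the difference---will be the technical heart of the argument, requiring careful commutation of $J_{pq}$ with multiplication by $|\xi|$ and $\l\xi,x\r$ alongside formula \eqref{Eq1.14}.
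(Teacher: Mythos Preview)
Your necessity argument is correct and is a pleasant alternative to the paper's Lemma~\ref{L2.4}: writing $t^m\l f(x+t\xi),\xi^m\r=\l f(y),(y-x)^m\r$ and expanding reduces $\psi^m$ to $x$-polynomials of degree $\le m$ times scalar ray transforms, and since each John operator lowers the polynomial degree by one while commuting with $\partial_\xi$, iterating $m+1$ times kills everything. The paper instead inducts on the tensor rank directly.

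Your sufficiency strategy is genuinely different from the paper's. The paper does \emph{not} induct on $m$; it writes down explicit candidate components $\psi_{i_1\dots i_m}$ via \eqref{Eq2.22}, proves in one stroke (the Main Lemma~\ref{S2.7}) that every $\psi_{i_1\dots i_m}$ satisfies the \emph{scalar} John equation $J_{ij}\psi_{i_1\dots i_m}=0$, invokes Theorem~\ref{Th1.1} only in the $m=0$ case to produce each $f_{i_1\dots i_m}$, and then verifies $J^kf=\psi^k$ by a combinatorial identity (Statement~\ref{S2.12}). Your inductive scheme, in which one first solves $If_0=\varphi^0$ using the full rank-$m$ Theorem~\ref{Th1.1} and then seeks $v\in\Sc(\Rb^n;S^{m-1}\Rb^n)$ with $f=f_0+dv$, is the method the paper uses in the \emph{two}-dimensional case (Theorem~\ref{Th1.4}), transplanted to $n\ge3$.

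The gap is exactly where you place it, but your proposed mechanism for closing it is wrong. You need $J_{i_1j_1}\cdots J_{i_mj_m}D=0$ for $D=\psi^m-\psi^m_{f_0}$, starting from the order-$(m{+}1)$ condition and the extra datum $\varphi^0=If_0$. The latter is equivalent to $(\l\xi,\partial_x\r)^m D=0$, which is the right input; but ``pulling out a factor of $|\xi|$'' from \eqref{Eq1.14} and commuting $J_{pq}$ with $|\xi|$ and $\l\xi,x\r$ does not produce the needed differential identity. Those commutators generate many terms with no evident cancellation, and the representation \eqref{Eq1.14} is ill-suited to differentiation in $\xi$. The correct device is the $\xi$-contraction used in the paper's proof of Lemma~\ref{S2.7}: from $J_{i_0j_0}(\text{John}_m D)=0$ contract with $\xi^{j_0}$ and use the homogeneity degree $-(m+1)$ of $\text{John}_m D$ to obtain
\[
m\,\partial_{x^{i_0}}\big(\text{John}_m D\big)=-\,\partial_{\xi^{i_0}}\big(\text{John}_m(\l\xi,\partial_x\r D)\big).
\]
Iterating this with $E_k:=(\l\xi,\partial_x\r)^kD$ and descending from $k=m-1$ (where $E_m=0$) shows that each $\text{John}_m E_k$ is constant in $x$, hence vanishes by Schwartz decay; at $k=0$ you recover $\text{John}_m D=0$. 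To make the decay step rigorous you also need the analogue of Lemma~\ref{L2.5}/Statement~\ref{S2.10}, namely that all $x,\xi$-derivatives of $\psi^m$ restrict to $\Sc(T\Sb^{n-1})$; you have not mentioned this, and it is not automatic. In short, your inductive route can be completed, but only by importing essentially the same contraction-plus-homogeneity trick that powers the paper's Main Lemma, together with the Schwartz-control lemma; the approach through explicit manipulation of \eqref{Eq1.14} is a dead end.
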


Observe that the differential equations \eqref{Eq1.15} are imposed on the function $\psi^m$ only. Nevertheless, all the data $(\varphi^0,\dots,\varphi^m)$ indirectly appear in \eqref{Eq1.15} in view of \eqref{Eq1.14}.

Let us now discuss the two-dimensional case. Fix Cartesian coordinates on ${\R}^2$. For $\xi=(\xi_1,\xi_2)\in{\R}^2$, let $\xi^\bot=(-\xi_2,\xi_1)$.
As before, to adopt our formulas to the Einstein summation rule, we use either lower or upper indices for coordinates of vectors.

\begin{theorem} \label{Th1.4}
Let $m\ge0$. If an $(m+1)$-tuple
$$
(\varphi^0,\dots,\varphi^m)\in\underbrace{{\mathcal S}(T{\mathbb S}^{1})\times\dots\times{\mathcal S}(T{\mathbb S}^{1})}_{m+1}
$$
belongs to the range of the operator,
$$
{\mathcal S}({\R}^2;S^m{\R}^2)\rightarrow\underbrace{{\mathcal S}(T{\mathbb S}^{1})\times\dots\times{\mathcal S}(T{\mathbb S}^{1})}_{m+1},\quad
f\mapsto(I^0\!f,\dots,I^m\!f),
$$
then the following conditions are satisfied.
\begin{enumerate}
\item $\varphi^k(x,-\xi)=(-1)^{m-k}\varphi^k(x,\xi)\ (0\le k\le m)$.

\item For every $r=0,1,2,\dots$ and for every $k=0,1,\dots,m$
$$
\int\limits_{-\infty}^\infty p^r\,\varphi^k(p\xi^\bot,\xi)\,dp=P^{rk}(\xi)
\quad\mbox{for}\quad\xi\in {\mathbb S}^1,
$$
where $P^{rk}(\xi)$ are homogeneous polynomials of degree $r+k+m$ on ${\R}^2$.

\item Polynomials $P^{rk}(\xi)$ are not independent. They are described by the following construction. For
every pair $(\alpha,\beta)$ of non-negative integers there exists a symmetric $m$-tensor
$\mu^{\alpha\beta}=(\mu^{\alpha\beta}_{i_1\dots i_m})\in S^m{\R}^2$ such that
$$
P^{rk}(\xi)=\sum\limits_{\alpha=0}^r\sum\limits_{\beta=0}^k{r\choose\alpha}{k\choose\beta}\mu^{\alpha+\beta,r+k-\alpha-\beta}_{i_1\dots i_m}
(\xi_1^\bot)^\alpha(\xi_2^\bot)^{r-\alpha}\xi_1^{\beta}\xi_2^{k-\beta}\xi^{i_1}\dots\xi^{i_m}.
$$
\end{enumerate}
Conversely, if functions
$\varphi^k\in{\mathcal S}(T{\mathbb S}^{1})\ (k=0,\dots m)$
satisfy conditions {\rm (1)}--{\rm (3)} with some tensors $\mu^{\alpha\beta}\in S^m{\R}^2$, then there exists a tensor field $f\in{\mathcal S}({\R}^2;S^m{\R}^2)$ such that $(\varphi^0,\dots,\varphi^m)=(I^0\!f,\dots,I^m\!f)$.
\end{theorem}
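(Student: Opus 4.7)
\emph{Necessity.} Given $f\in\Sc(\R^2;S^m\R^2)$, condition (1) follows from the substitution $t\mapsto-t$ combined with $\l f,(-\xi)^m\r=(-1)^m\l f,\xi^m\r$. For (2) and (3), I substitute the definition of $\varphi^k$ and change variables $(p,t)\mapsto y=p\xi^\bot+t\xi\in\R^2$ (Jacobian one), producing
\beq
\int_{-\infty}^\infty p^r\,\varphi^k(p\xi^\bot,\xi)\,dp=\int_{\R^2}\l y,\xi^\bot\r^r\l y,\xi\r^k\l f(y),\xi^m\r\,dy.
\eeq
Expanding both powers by the binomial theorem in the coordinates $y_1,y_2$ and $\xi_1,\xi_2$ and setting $\mu^{\alpha\beta}_{i_1\dots i_m}:=\int y_1^\alpha y_2^\beta f_{i_1\dots i_m}(y)\,dy$ reproduces the polynomial $P^{rk}(\xi)$ exactly as stated in (3).

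\emph{Sufficiency, step 1: reduction to zero moments.} Given the tensors $\mu^{\alpha\beta}$, I construct an auxiliary $g\in\Sc(\R^2;S^m\R^2)$ whose monomial moments equal $\mu^{\alpha\beta}$. Componentwise, this is Borel's theorem for the Taylor coefficients of $\hat g_{i_1\dots i_m}$ at the origin, performed with a Schwartz-convergent sum of shrinking cutoffs so that $\hat g\in\Sc$ and hence $g\in\Sc$. By the necessity direction applied to $g$, the moment polynomials of $(I^0g,\dots,I^mg)$ are exactly $P^{rk}$, so the residuals $\psi^k:=\varphi^k-I^kg$ satisfy (1) together with $\int p^r\psi^k(p\xi^\bot,\xi)\,dp=0$ for every $r\ge 0$, $0\le k\le m$, and $\xi\in\Sb^1$.

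\emph{Sufficiency, step 2: Fourier inversion.} Taking the $1$D Fourier transform $\tilde\psi^k(\sigma,\xi)=\int e^{-\I\sigma p}\psi^k(p\xi^\bot,\xi)\,dp$ and running the necessity computation in reverse, the sought $h\in\Sc(\R^2;S^m\R^2)$ with $I^kh=\psi^k$ must satisfy
\beq
\tilde\psi^k(\sigma,\xi)=\I^k(\xi\cdot\n_\eta)^k\l\hat h(\eta),\xi^m\r\big|_{\eta=\sigma\xi^\bot}\qquad(k=0,\dots,m).
\eeq
The vanishing of all $p$-moments is equivalent to $\tilde\psi^k(\sigma,\xi)$ being flat to infinite order at $\sigma=0$ uniformly in $\xi$. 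Applying Theorem~\ref{Th1.2} to $\psi^0$, whose Pantjukhina polynomials all vanish, produces a first field $h_0\in\Sc$ with $I^0h_0=\psi^0$; the equations for $k=1,\dots,m$ are then used iteratively, exploiting the large kernel of $I^0$ on symmetric derivatives $d\sigma$ and the identity $I^1(d\sigma)=-I^0\sigma$ (which reduces the $k=1$ adjustment to a rank-$(m-1)$ Pantjukhina problem), or handled simultaneously via the inversion algorithm of \cite{KMSS}. Setting $f:=g+h$ then recovers $(I^0f,\dots,I^mf)=(\varphi^0,\dots,\varphi^m)$.

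\emph{Main obstacle.} The central technical point is in step 2: showing that the candidate $\hat h$ reconstructed ray-by-ray extends smoothly through $\eta=0$ and is rapidly decaying, i.e., $h\in\Sc(\R^2;S^m\R^2)$. The infinite-order flatness of $\tilde\psi^k$ at $\sigma=0$, which is exactly the vanishing of all $\mu^{\alpha\beta}$, is what permits this smooth extension across the origin; condition (1) is needed for single-valuedness under the identification $(\sigma,\xi)\sim(-\sigma,-\xi)$. The required uniform estimates on $\tilde\psi^k$ in $\xi\in\Sb^1$ parallel those in the classical range characterization of the scalar Radon transform and are expected to be the most delicate part of the argument.
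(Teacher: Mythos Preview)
Your necessity argument is correct and essentially the same as the paper's (the paper works in complex coordinates $z=x_1+\textsl{i}x_2$, $\zeta=\xi_1+\textsl{i}\xi_2$, but the computation is identical).

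For sufficiency, your inductive architecture in Step~2---use Theorem~\ref{Th1.2} to get a first field matching $\varphi^0$ (or $\psi^0$), then correct by a potential $dv$ via $I^{k}(dv)=-kI^{k-1}v$ to reduce to a rank-$(m-1)$ problem---is exactly the paper's strategy. The Borel step you insert beforehand is harmless but does not buy anything: after you pick $h_0$ with $I^0h_0=\psi^0$, the moments of $h_0$ are \emph{not} all zero, so the residuals $\chi^k=-\tfrac{1}{k+1}(\psi^{k+1}-I^{k+1}h_0)$ again carry nontrivial moment polynomials, and you are back to the same verification the paper must do.

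The genuine gap is that you do not address that verification. A priori $\int p^r\chi^k(p\xi^\bot,\xi)\,dp$ is a homogeneous polynomial of degree $r+k+m+1$ in $\xi$ (since $\psi^{k+1}$ and $I^{k+1}h_0$ each give degree $r+(k+1)+m$), whereas the rank-$(m-1)$ hypothesis requires degree $r+k+(m-1)$. You must show that the difference $P^{r,k+1}-Q^{r,k+1}$ is divisible by $|\xi|^2$ on the circle, and that the quotient again has the structural form of condition~(3). The paper singles this out as ``the most difficult part of the proof'' and carries it through by extracting from $P^{r0}=Q^{r0}$ the two families of identities $\tilde\mu_0^{0r}=\tilde\nu_0^{0r}$ and $\tilde\mu_m^{k+1,0}=\tilde\nu_m^{k+1,0}$, which kill the two extreme monomials $\zeta^{m+r+k+1}$ and $\bar\zeta^{m+r+k+1}$ that obstruct the degree drop. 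Your proposal does not mention this step, and neither the Borel reduction nor the inversion algorithm of \cite{KMSS} (which presupposes the data are already in the range) can replace it.

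Relatedly, your ``Main obstacle'' is misplaced. Once you invoke Theorem~\ref{Th1.2} as a black box, the smoothness of $\hat h$ through the origin is already handled; you never need to build $\hat h$ ray-by-ray. The actual obstacle is the algebraic degree-drop just described.
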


Proofs of Theorems \ref{Th1.3} and \ref{Th1.4} are presented in Sections \ref{S2} and \ref{S3} respectively.

\section{Proof of Theorem \ref{Th1.3}}\label{S2}

\subsection{Preliminaries}
In this section, we prove several preliminary lemmas required for the proof of Theorem \ref{Th1.3}.

\begin{lemma} \label{L2.1}
	If a function $\psi\in C^\infty({\R}^n\setminus\{0\})$ is positively homogeneous of degree $\lambda$, then for any integer $k\ge0$
	$$
	\xi^{i_1}\dots\xi^{i_k}\frac{\partial^k\psi}{\partial\xi^{i_1}\dots\partial\xi^{i_k}}(\xi)=
	\lambda(\lambda-1)\dots(\lambda-k+1)\psi(\xi).
	$$
\end{lemma}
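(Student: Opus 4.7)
The plan is to derive the identity directly from the defining relation of positive homogeneity by differentiating $k$ times in the scaling parameter. Since $\psi$ is positively homogeneous of degree $\lambda$, we have $\psi(t\xi) = t^\lambda \psi(\xi)$ for every $t>0$ and every $\xi \in \mathbb{R}^n \setminus \{0\}$.

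First I would fix $\xi$ and differentiate both sides $k$ times with respect to $t$. On the right-hand side this produces $\lambda(\lambda-1)\cdots(\lambda-k+1)\,t^{\lambda-k}\,\psi(\xi)$. On the left-hand side, a repeated application of the chain rule yields
$$\frac{d^k}{dt^k}\psi(t\xi) = \xi^{i_1}\cdots\xi^{i_k}\,\frac{\partial^k\psi}{\partial\xi^{i_1}\cdots\partial\xi^{i_k}}(t\xi),$$
with summation over repeated indices. Setting $t=1$ in both expressions gives the claimed identity.

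As an alternative, one could proceed by induction on $k$. The base case $k=1$ is the classical Euler identity $\xi^i \partial_i \psi = \lambda \psi$, obtained by differentiating $\psi(t\xi)=t^\lambda\psi(\xi)$ in $t$ at $t=1$. For the inductive step, observe that each partial derivative $\partial_j\psi$ is positively homogeneous of degree $\lambda-1$, so Euler's identity applied to $\partial_j\psi$ gives $\xi^i \partial_i \partial_j \psi = (\lambda-1)\partial_j \psi$; iterating this and contracting with $\xi^{i_1}\cdots\xi^{i_{k-1}}$ produces the falling factorial coefficient.

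There is no substantive obstacle here: the lemma is a direct consequence of the definition of positive homogeneity together with the chain rule, and the only care needed is to ensure the order of contractions and derivatives is handled cleanly (which is automatic because partial derivatives commute on $C^\infty$ functions). The falling factorial $\lambda(\lambda-1)\cdots(\lambda-k+1)$ correctly vanishes when $\lambda$ is a non-negative integer strictly less than $k$, reflecting the fact that $k$-th partials of a polynomial of degree $\lambda<k$ vanish.
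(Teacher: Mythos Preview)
Your proposal is correct. The paper itself omits the proof, remarking only that it ``can be easily done by induction on $k$ on the basis of Euler equation for homogeneous functions,'' which is precisely your second approach; your first approach (differentiating the homogeneity relation $k$ times in $t$ and setting $t=1$) is an equally valid and slightly more direct variant.
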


We omit the proof which can be easily done by induction on $k$ on the basis of Euler equation for homogeneous functions.

We will use two first order differential operators:
\[
\l\xi,\partial_x\r=\xi^p\frac{\partial}{\partial x^p},\quad \l\xi,\partial_\xi\r=\xi^p\frac{\partial}{\partial \xi^p}.
\]
The {\it symmetrization} $\sigma(j_1\dots j_k)$ in the indices $(j_1,\dots, j_k)$ is defined by
$$
\sigma(j_1\dots j_k)a_{j_1\dots j_k}=\frac{1}{k!}\sum\limits_{\pi\in\Pi_k}a_{j_{\pi(1)}\dots j_{\pi(k)}},
$$
where $\Pi_k$ is the set of all permutations of the set $\{1,\dots,k\}$.

\begin{lemma} \label{L2.2}
	For any non-negative integers $k$ and $\ell$ and for all indices $(j_1,\dots,j_k)$ satisfying $1\le j_1,\dots,j_k\le n$, the commutator formula
	\begin{equation}
	\l\xi,\partial_x\r^\ell\frac{\partial^k}{\partial\xi^{j_1}\dots\partial\xi^{j_k}}
	=\sigma(j_1\dots j_k)\sum\limits_{p=0}^k(-1)^p{k\choose p}\frac{\ell!}{(\ell\!-\!p)!}\,
	\frac{\partial^k}{\partial x^{j_1}\dots\partial x^{j_p}\partial\xi^{j_{p+1}}\dots\partial\xi^{j_k}}\l\xi,\partial_x\r^{\ell-p}
	                              \label{Eq2.1}
	\end{equation}
	holds under the agreement: $\l\xi,\partial_x\r^r=0$ for $r<0$.
\end{lemma}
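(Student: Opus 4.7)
The plan is to prove the commutator formula by induction on $k$, with the key input being the special case $k=1$, which itself follows by induction on $\ell$ from the elementary commutator $[\l\xi,\partial_x\r,\partial/\partial\xi^j]=-\partial/\partial x^j$.

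\textbf{Step 1: The base case $k=1$.} I first verify the identity
\[
\l\xi,\partial_x\r^\ell\frac{\partial}{\partial\xi^j}=\frac{\partial}{\partial\xi^j}\l\xi,\partial_x\r^\ell-\ell\,\frac{\partial}{\partial x^j}\l\xi,\partial_x\r^{\ell-1},
\]
which is the content of \eqref{Eq2.1} for $k=1$ (the symmetrization is trivial). For $\ell=1$ this is the direct commutator computation $[\l\xi,\partial_x\r,\partial/\partial\xi^j]=-\partial/\partial x^j$. The induction step uses that $\l\xi,\partial_x\r$ commutes with $\partial/\partial x^j$, so from $\l\xi,\partial_x\r^\ell\frac{\partial}{\partial\xi^j}=\l\xi,\partial_x\r\cdot\l\xi,\partial_x\r^{\ell-1}\frac{\partial}{\partial\xi^j}$ the formula propagates with the coefficient $\ell$ arising from $(\ell-1)+1$.

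\textbf{Step 2: Induction on $k$.} The case $k=0$ is trivial (only the $p=0$ term survives). Assuming \eqref{Eq2.1} holds at level $k$, I write
\[
\l\xi,\partial_x\r^\ell\frac{\partial^{k+1}}{\partial\xi^{j_1}\dots\partial\xi^{j_{k+1}}}=\Big(\l\xi,\partial_x\r^\ell\frac{\partial^{k}}{\partial\xi^{j_1}\dots\partial\xi^{j_k}}\Big)\frac{\partial}{\partial\xi^{j_{k+1}}},
\]
apply the induction hypothesis to the parenthesized operator, and then push each factor $\l\xi,\partial_x\r^{\ell-p}$ past the trailing $\partial/\partial\xi^{j_{k+1}}$ using Step 1. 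This produces two types of terms: one in which $j_{k+1}$ becomes a $\xi$-index (coming from the first summand of the $k=1$ formula) and one in which $j_{k+1}$ becomes an $x$-index (with an extra factor $-(\ell-p)$).

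\textbf{Step 3: Re-indexing and Pascal's identity.} Since the LHS is manifestly symmetric in $(j_1,\dots,j_{k+1})$ I may apply $\sigma(j_1\dots j_{k+1})$ to both sides. The two families of terms organize themselves by the total number $q$ of $x$-derivatives: the first contributes with $p=q$ and coefficient $(-1)^q\binom{k}{q}\frac{\ell!}{(\ell-q)!}$, while the second contributes with $p=q-1$ and coefficient $(-1)^q\binom{k}{q-1}\frac{\ell!(\ell-q+1)}{(\ell-q+1)!}=(-1)^q\binom{k}{q-1}\frac{\ell!}{(\ell-q)!}$. Summing and using $\binom{k}{q}+\binom{k}{q-1}=\binom{k+1}{q}$ yields exactly the coefficient prescribed by \eqref{Eq2.1} at level $k+1$, closing the induction. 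The convention $\l\xi,\partial_x\r^r=0$ for $r<0$ handles the boundary case $q=\ell+1$ automatically.

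The only real subtlety — and the step most prone to sign and index errors — is the symmetrization bookkeeping in Step 3: after induction, the outputs are symmetric in $(j_1,\dots,j_k)$ but treat $j_{k+1}$ asymmetrically, and one must observe that partial derivatives commute (so each multi-derivative is symmetric in its $x$-indices and in its $\xi$-indices separately) and that applying $\sigma(j_1\dots j_{k+1})$ to the LHS changes nothing, in order to legitimately collect the two families of terms under a common symmetrization before invoking Pascal's identity.
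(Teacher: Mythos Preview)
Your proof is correct and follows essentially the same induction-on-$k$ / Pascal's-identity strategy as the paper; the only tactical difference is that you peel off $\partial/\partial\xi^{j_{k+1}}$ on the \emph{right} and invoke the separately-proved $k=1$ formula to move each $\l\xi,\partial_x\r^{\ell-p}$ past it, whereas the paper applies $\partial/\partial\xi^{j_{k+1}}$ on the \emph{left} of the induction hypothesis (producing the commutator term $\ell\,\l\xi,\partial_x\r^{\ell-1}\partial/\partial x^{j_{k+1}}$) and then applies the level-$k$ induction hypothesis a second time with $\ell$ replaced by $\ell-1$ to handle that term. Both routes land on the same pair of sums, symmetrize to $\sigma(j_1\dots j_{k+1})$, and finish via $\binom{k}{p}+\binom{k}{p-1}=\binom{k+1}{p}$.
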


\begin{proof}
	We prove the statement by induction on $k$. The statement trivially holds for $k=0$. Assuming \eqref{Eq2.1} to be valid for some $k$, we apply  $\frac{\partial}{\partial\xi^{j_{k+1}}}$ to \eqref{Eq2.1} from the left
	$$
	\begin{aligned}
	&\l\xi,\partial_x\r^\ell\,\frac{\partial^{k+1}}{\partial\xi^{j_1}\dots\partial\xi^{j_{k+1}}}
	+\ell\,\l\xi,\partial_x\r^{\ell-1}\,\frac{\partial^{k+1}}{\partial x^{j_{k+1}}\partial\xi^{j_1}\dots\partial\xi^{j_{k}}}=\\
	&=\sigma(j_1\dots j_k)\sum\limits_{p=0}^k(-1)^p{k\choose p}\frac{\ell!}{(\ell -p)!}\,
	\frac{\partial^{k+1}}{\partial x^{j_1}\dots\partial x^{j_p}\partial\xi^{j_{p+1}}\dots\partial\xi^{j_{k+1}}}\l\xi,\partial_x\r^{\ell-p}.
	\end{aligned}
	$$
	We write this in the form
	\begin{equation}
	\begin{aligned}
	\l\xi,\partial_x\r^\ell\,\frac{\partial^{k+1}}{\partial\xi^{j_1}\dots\partial\xi^{j_{k+1}}}
	&=\sigma(j_1\dots j_k)\sum\limits_{p=0}^k\frac{(-1)^p\ell!}{(\ell -p)!}{k\choose p}\,
	\frac{\partial^{k+1}}{\partial x^{j_1}\dots\partial x^{j_p}\partial\xi^{j_{p+1}}\dots\partial\xi^{j_{k+1}}}\l\xi,\partial_x\r^{\ell-p}\\
	&-\ell\,\l\xi,\partial_x\r^{\ell-1}\,\frac{\partial^{k+1}}{\partial x^{j_{k+1}}\partial\xi^{j_1}\dots\partial\xi^{j_{k}}}.
	\end{aligned}
	                                            \label{Eq2.2}
	\end{equation}
		By the same induction hypothesis \eqref{Eq2.1},
	$$
\begin{aligned}
	\l\xi,\partial_x\r^{\ell-1}\,&\frac{\partial^{k}}{\partial\xi^{j_1}\dots\partial\xi^{j_{k}}}=\\
	&=\sigma(j_1\dots j_k)\sum\limits_{p=0}^k(-1)^p{k\choose p}\frac{(\ell-1)!}{(\ell -p-1)!}\,
	\frac{\partial^k}{\partial x^{j_1}\dots\partial x^{j_p}\partial\xi^{j_{p+1}}\dots\partial\xi^{j_k}}\l\xi,\partial_x\r^{\ell-p-1}.
\end{aligned}
	$$
	We apply $\frac{\partial}{\partial x^{j_{k+1}}}$ from the left to the above equality. Since the latter derivative commutes with $\l\xi,\partial_x\r$, the result can be written as
	$$
	\begin{aligned}
	&\l\xi,\partial_x\r^{\ell-1}\,\frac{\partial^{k+1}}{\partial x^{j_{k+1}}\partial\xi^{j_1}\dots\partial\xi^{j_{k}}}=\\
	&=\sigma(j_1\dots j_k)\sum\limits_{p=0}^k(-1)^p{k\choose p}\frac{(\ell-1)!}{(\ell -p-1)!}\,
	\frac{\partial^{k+1}}{\partial x^{j_1}\dots\partial x^{j_p}\partial x^{j_{k+1}}\partial\xi^{j_{p+1}}\dots\partial\xi^{j_k}}\l\xi,\partial_x\r^{\ell-p-1}.
	\end{aligned}
	$$
	Substitute this expression into \eqref{Eq2.2}
	$$
	\begin{aligned}
	&\l\xi,\partial_x\r^\ell\,\frac{\partial^{k+1}}{\partial\xi^{j_1}\dots\partial\xi^{j_{k+1}}}
	=\sigma(j_1\dots j_k)\sum\limits_{p=0}^k\frac{(-1)^p\ell!}{(\ell -p)!}{k\choose p}
	\frac{\partial^{k+1}}{\partial x^{j_1}\dots\partial x^{j_p}\partial\xi^{j_{p+1}}\dots\partial\xi^{j_{k+1}}}\l\xi,\partial_x\r^{\ell-p}\\
	&-\ell\,\sigma(j_1\dots j_k)\sum\limits_{p=0}^k(-1)^p{k\choose p}\frac{(\ell-1)!}{(\ell -p-1)!}\,
	\frac{\partial^{k+1}}{\partial x^{j_1}\dots\partial x^{j_p}\partial x^{j_{k+1}}\partial\xi^{j_{p+1}}\dots\partial\xi^{j_k}}\l\xi,\partial_x\r^{\ell-p-1}.
	\end{aligned}
	$$
	The symmetrization $\sigma(j_1\dots j_k)$ can be replaced with $\sigma(j_1\dots j_{k+1})$ since the left-hand side is symmetric in indices
	$(j_1,\dots, j_{k+1})$. After the replacement, we can write these indices in the lexicographic order. In this way the formula becomes
	$$
	\begin{aligned}
	&\l\xi,\partial_x\r^\ell\,\frac{\partial^{k+1}}{\partial\xi^{j_1}\dots\partial\xi^{j_{k+1}}}
	=\sigma(j_1\dots j_{k+1})\sum\limits_{p=0}^k\frac{(-1)^p\ell!}{(\ell -p)!}{k\choose p}
	\frac{\partial^{k+1}}{\partial x^{j_1}\dots\partial x^{j_p}\partial\xi^{j_{p+1}}\dots\partial\xi^{j_{k+1}}}\l\xi,\partial_x\r^{\ell-p}\\
	&+\sigma(j_1\dots j_{k+1})\sum\limits_{p=0}^k(-1)^{p+1}{k\choose p}\frac{\ell!}{(\ell -p-1)!}\,\,
	\frac{\partial^{k+1}}{\partial x^{j_1}\dots\partial x^{j_{p+1}}\partial\xi^{j_{p+2}}\dots\partial\xi^{j_{k+1}}}\l\xi,\partial_x\r^{\ell-p-1}.
	\end{aligned}
	$$
	In the second sum, we change the summation index as $p:=p-1$
	$$
	\begin{aligned}
	&\l\xi,\partial_x\r^\ell\,\frac{\partial^{k+1}}{\partial\xi^{j_1}\dots\partial\xi^{j_{k+1}}}
	=\sigma(j_1\dots j_{k+1})\sum\limits_{p=0}^k\frac{(-1)^p\ell!}{(\ell -p)!}{k\choose p}
	\frac{\partial^{k+1}}{\partial x^{j_1}\dots\partial x^{j_p}\partial\xi^{j_{p+1}}\dots\partial\xi^{j_{k+1}}}\l\xi,\partial_x\r^{\ell-p}\\
	&+\sigma(j_1\dots j_{k+1})\sum\limits_{p=1}^{k+1}(-1)^p{k\choose {p-1}}\frac{\ell!}{(\ell -p)!}\,
	\frac{\partial^{k+1}}{\partial x^{j_1}\dots\partial x^{j_p}\partial\xi^{j_{p+1}}\dots\partial\xi^{j_{k+1}}}\l\xi,\partial_x\r^{\ell-p}.
	\end{aligned}
	$$
	Under the agreement ${r\choose s}=0$ if either $s<0$ or $r<s$, both summations can be extended to the limits $0\le p\le k+1$, and the formula becomes
	$$
	\begin{aligned}
	&\l\xi,\partial_x\r^\ell\,\frac{\partial^{k+1}}{\partial\xi^{j_1}\dots\partial\xi^{j_{k+1}}}=\\
	&=\sigma(j_1\dots j_{k+1})\sum\limits_{p=0}^{k+1}(-1)^p\Big[{k\choose p}+{k\choose {p\!-\!1}}\Big]\frac{\ell!}{(\ell -p)!}\,
	\frac{\partial^{k+1}}{\partial x^{j_1}\dots\partial x^{j_p}\partial\xi^{j_{p+1}}\dots\partial\xi^{j_{k+1}}}\l\xi,\partial_x\r^{\ell-p}.
	\end{aligned}
	$$
	With the help of the Pascal relation ${k\choose p}+{k\choose {p-1}}={{k+1}\choose p}$, the formula takes the final form
	$$
	\begin{aligned}
	\l\xi,\partial_x\r^\ell\,&\frac{\partial^{k+1}}{\partial\xi^{j_1}\dots\partial\xi^{j_{k+1}}}=\\
	&=\sigma(j_1\dots j_{k+1})\sum\limits_{p=0}^{k+1}(-1)^p{{k+1}\choose p}\frac{\ell!}{(\ell -p)!}\,
	\frac{\partial^{k+1}}{\partial x^{j_1}\dots\partial x^{j_p}\partial\xi^{j_{p+1}}\dots\partial\xi^{j_{k+1}}}\l\xi,\partial_x\r^{\ell-p}.
	\end{aligned}
	$$
	This finishes the induction step.
\end{proof}

\begin{corollary} \label{C2.1}
For integers $0\le k\le m$, the identity
\begin{equation}
\begin{aligned}
\sigma(j_1\dots j_m)\sum\limits_{k=0}^m\frac{1}{(m-k)!}\l\xi,\partial_x\r
&\frac{\partial^m}{\partial x^{j_1}\dots\partial x^{j_k}\partial \xi^{j_{k+1}}\dots\partial\xi^{j_m}}\l\xi,\partial_x\r^{m-k}=\\
&=\frac{1}{m!}\frac{\partial^m}{\partial\xi^{j_1}\dots\partial\xi^{j_m}}\l\xi,\partial_x\r^{m+1}
\end{aligned}
                       	\label{Eq2.2a}
\end{equation}
holds for all $1\le j_1,\dots,j_m\le n$.
\end{corollary}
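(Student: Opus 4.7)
\medskip

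\textbf{Proof plan for Corollary~\ref{C2.1}.} The strategy is to apply Lemma~\ref{L2.2} with $\ell=1$ to commute the single outer $\l\xi,\partial_x\r$ past the $\partial/\partial\xi$ block, and then watch the resulting sum telescope. The point is that for $\ell=1$ the factor $\l\xi,\partial_x\r^{1-p}$ is zero for $p\geq 2$, so only the $p=0$ and $p=1$ terms of Lemma~\ref{L2.2} contribute, giving a clean two-term identity.

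First, since $\l\xi,\partial_x\r$ commutes with every $\partial/\partial x^{j_i}$, rewrite the $k$-th summand on the left-hand side of \eqref{Eq2.2a} as
\[
\sigma(j_1\dots j_m)\,\frac{\partial^k}{\partial x^{j_1}\dots\partial x^{j_k}}\,
\Bigl(\l\xi,\partial_x\r\,\frac{\partial^{m-k}}{\partial\xi^{j_{k+1}}\dots\partial\xi^{j_m}}\Bigr)\,\l\xi,\partial_x\r^{m-k}.
\]
Now apply Lemma~\ref{L2.2} with $\ell=1$ and with its $k$ equal to $m-k$ (and indices $j_{k+1},\dots,j_m$). Only $p=0,1$ survive, yielding
\[
\l\xi,\partial_x\r\frac{\partial^{m-k}}{\partial\xi^{j_{k+1}}\dots\partial\xi^{j_m}}
=\frac{\partial^{m-k}}{\partial\xi^{j_{k+1}}\dots\partial\xi^{j_m}}\l\xi,\partial_x\r
-(m-k)\,\sigma(j_{k+1}\dots j_m)\frac{\partial^{m-k}}{\partial x^{j_{k+1}}\partial\xi^{j_{k+2}}\dots\partial\xi^{j_m}}.
\]
The outer $\sigma(j_1\dots j_m)$ subsumes the inner partial symmetrization $\sigma(j_{k+1}\dots j_m)$ and also renders the first term's ordering of indices immaterial.

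Introduce the shorthand
\[
A_k:=\sigma(j_1\dots j_m)\,\frac{\partial^m}{\partial x^{j_1}\dots\partial x^{j_k}\partial\xi^{j_{k+1}}\dots\partial\xi^{j_m}},
\]
so that after substitution the left-hand side of \eqref{Eq2.2a} becomes
\[
\sum_{k=0}^{m}\frac{1}{(m-k)!}\Bigl[A_k\,\l\xi,\partial_x\r^{m-k+1}-(m-k)\,A_{k+1}\,\l\xi,\partial_x\r^{m-k}\Bigr].
\]
Using $(m-k)/(m-k)!=1/(m-k-1)!$ and reindexing $k\mapsto k-1$ in the second sum (whose $k=m$ summand vanishes), the two sums cancel in a telescoping fashion, leaving only the $k=0$ contribution from the first sum:
\[
\frac{1}{m!}\,A_0\,\l\xi,\partial_x\r^{m+1}
=\frac{1}{m!}\,\frac{\partial^m}{\partial\xi^{j_1}\dots\partial\xi^{j_m}}\l\xi,\partial_x\r^{m+1},
\]
which is the right-hand side.

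The only subtle point is bookkeeping of the symmetrizations (verifying that the composition of an outer full symmetrization with an inner partial one reduces to the outer one), and recognizing that the sum telescopes after the reindexing; everything else reduces to an application of Lemma~\ref{L2.2} in its simplest nontrivial case $\ell=1$.
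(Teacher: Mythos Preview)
Your proof is correct and follows essentially the same approach as the paper's own proof: both commute the outer $\l\xi,\partial_x\r$ past the $x$-derivatives, apply the $\ell=1$ case of Lemma~\ref{L2.2} to obtain the two-term commutator identity, absorb the inner partial symmetrization into the outer full one, and then observe that the resulting sum telescopes to leave only the $k=0$ term. Your shorthand $A_k$ makes the telescoping more transparent than the paper's longhand version, but the logical content is identical.
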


\begin{proof}
Denote the left-hand side of \eqref{Eq2.2a} by $A$. Indices can be written in an arbitrary order because of the presence of the symmetrization $\sigma(j_1\dots j_m)$. In particular.
$$
A=\sigma(j_1\dots j_m)\sum\limits_{k=0}^m\frac{1}{(m-k)!}\l\xi,\partial_x\r
\frac{\partial^m}{\partial x^{j_{m-k+1}}\dots\partial x^{j_m}\partial\xi^{j_{1}}\dots\partial\xi^{j_{m-k}}}\l\xi,\partial_x\r^{m-k}.
$$
Since $\l\xi,\partial_x\r$ commutes with derivatives $\frac{\partial}{\partial x^j}$, this can be written as
\begin{equation}
A=\sigma(j_1\dots j_m)\sum\limits_{k=0}^m\frac{1}{(m-k)!}
\frac{\partial^k}{\partial x^{j_{m-k+1}}\dots\partial x^{j_m}}
\l\xi,\partial_x\r
\frac{\partial^{m-k}}{\partial\xi^{j_{1}}\dots\partial\xi^{j_{m-k}}}\l\xi,\partial_x\r^{m-k}.
                       	\label{Eq2.2b}
\end{equation}
By Lemma \ref{L2.2},
$$
\l\xi,\partial_x\r\frac{\partial^{m-k}}{\partial\xi^{j_{1}}\dots\partial\xi^{j_{m-k}}}
=\frac{\partial^{m-k}}{\partial\xi^{j_{1}}\dots\partial\xi^{j_{m-k}}}\l\xi,\partial_x\r
-(m-k)\sigma(j_1\dots j_{m-k})\frac{\partial^{m-k}}{\partial x^{j_{1}}\partial\xi^{j_{2}}\dots\partial\xi^{j_{m-k}}}.
$$
While substituting this expression into \eqref{Eq2.2b}, we can omit the symmetrization $\sigma(j_1\dots j_{m-k})$ because of the presence of the stronger symmetrization $\sigma(j_1\dots j_{m})$. In this way we obtain
$$
\begin{aligned}
A&=\sigma(j_1\dots j_m)\sum\limits_{k=0}^m\frac{1}{(m-k)!}
\frac{\partial^m}{\partial x^{j_{m-k+1}}\dots\partial x^{j_m}\partial\xi^{j_{1}}\dots\partial\xi^{j_{m-k}}}\l\xi,\partial_x\r^{m-k+1}\\
&-\sigma(j_1\dots j_m)\sum\limits_{k=0}^m\frac{m-k}{(m-k)!}
\frac{\partial^m}{\partial x^{j_{m-k+1}}\dots\partial x^{j_m}\partial x^{j_{1}}\partial\xi^{j_{2}}\dots\partial\xi^{j_{m-k}}}\l\xi,\partial_x\r^{m-k}.
\end{aligned}
$$
In the second sum, we can reduce summation limits to $0\le k\le m-1$ because of the presence of the factor $m-k$. Besides this, we can again write indices in the lexicographic order. The formula becomes
$$
\begin{aligned}
A&=\sigma(j_1\dots j_m)\sum\limits_{k=0}^m\frac{1}{(m-k)!}
\frac{\partial^m}{\partial x^{j_1}\dots\partial x^{j_k}\partial\xi^{j_{k+1}}\dots\partial\xi^{j_{m}}}\l\xi,\partial_x\r^{m-k+1}\\
&-\sigma(j_1\dots j_m)\sum\limits_{k=0}^{m-1}\frac{1}{(m-k-1)!}
\frac{\partial^m}{\partial x^{j_{1}}\dots\partial x^{j_{k+1}}\partial\xi^{j_{k+2}}\dots\partial\xi^{j_{m}}}\l\xi,\partial_x\r^{m-k}.
\end{aligned}
$$
We distinguish the summand of the first sum corresponding to $k=0$ and change the summation variable in the second sum as $k:=k-1$. The formula takes the form
$$
\begin{aligned}
A&=\frac{1}{m!}\frac{\partial^m}{\partial\xi^{j_{1}}\dots\partial\xi^{j_{m}}}\l\xi,\partial_x\r^{m+1}\\
&+\sigma(j_1\dots j_m)\sum\limits_{k=1}^m\frac{1}{(m-k)!}
\frac{\partial^m}{\partial x^{j_1}\dots\partial x^{j_k}\partial\xi^{j_{k+1}}\dots\partial\xi^{j_{m}}}\l\xi,\partial_x\r^{m-k+1}\\
&-\sigma(j_1\dots j_m)\sum\limits_{k=1}^{m}\frac{1}{(m-k)!}
\frac{\partial^m}{\partial x^{j_{1}}\dots\partial x^{j_{k}}\partial\xi^{j_{k+1}}\dots\partial\xi^{j_{m}}}\l\xi,\partial_x\r^{m-k+1}.
\end{aligned}
$$
Two sums on the right-hand side cancel each other and we arrive at \eqref{Eq2.2a}.
\end{proof}

\begin{lemma} \label{L2.3}
	For a function $f\in{\mathcal S}({\R}^n)$, the equalities
	\begin{equation}
	\frac{\partial^\ell(J^kf)}{\partial x^{j_1}\dots\partial x^{j_\ell}}=
	\frac{\partial^\ell(J^{k-\ell}f)}{\partial \xi^{j_1}\dots\partial \xi^{j_\ell}}
	                          \label{Eq2.3}
	\end{equation}
	hold for all $0\le\ell\le k$ and for all indices $1\le j_1,\dots,j_\ell\le n$.
\end{lemma}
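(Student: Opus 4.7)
The plan is to verify \eqref{Eq2.3} by computing both sides directly via differentiation under the integral sign. This interchange is legitimate because $f\in{\mathcal S}({\R}^n)$ has Schwartz decay, so for every multi-index $\alpha$ and every $k\ge 0$ the integrand $t^k \partial^\alpha f(x+t\xi)$ and its $(x,\xi)$-derivatives are dominated, uniformly on compact sets in $(x,\xi)$, by absolutely integrable functions of $t$; this is a standard application of dominated convergence.

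First I would compute the left-hand side. Differentiating $(J^k f)(x,\xi)=\int_{-\infty}^\infty t^k f(x+t\xi)\,dt$ under the integral $\ell$ times with respect to $x$ gives
\[
\frac{\partial^\ell (J^kf)}{\partial x^{j_1}\cdots\partial x^{j_\ell}}(x,\xi)
=\int_{-\infty}^\infty t^k\,\frac{\partial^\ell f}{\partial x^{j_1}\cdots\partial x^{j_\ell}}(x+t\xi)\,dt,
\]
since each $\partial/\partial x^{j_i}$ falls directly on $f$ through the chain rule and leaves the weight $t^k$ untouched.

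Next I would compute the right-hand side. Each application of $\partial/\partial\xi^{j_i}$ to $f(x+t\xi)$ produces $t\,(\partial f/\partial x^{j_i})(x+t\xi)$ by the chain rule, so iterating $\ell$ times on $(J^{k-\ell}f)(x,\xi)=\int_{-\infty}^\infty t^{k-\ell} f(x+t\xi)\,dt$ yields
\[
\frac{\partial^\ell (J^{k-\ell}f)}{\partial\xi^{j_1}\cdots\partial\xi^{j_\ell}}(x,\xi)
=\int_{-\infty}^\infty t^{k-\ell}\cdot t^\ell\,\frac{\partial^\ell f}{\partial x^{j_1}\cdots\partial x^{j_\ell}}(x+t\xi)\,dt.
\]
The weights combine as $t^{k-\ell}\cdot t^\ell=t^k$, so the two integrals coincide and \eqref{Eq2.3} follows.

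There is essentially no obstacle here; the identity simply encodes the observation that, inside the integral defining $J^kf$, the operators $\partial/\partial x^j$ and $t^{-1}\partial/\partial\xi^j$ have identical effect on the integrand $t^k f(x+t\xi)$, with each pass of $\partial/\partial\xi^j$ absorbing one factor of $t$ into the polynomial weight. An inductive proof on $\ell$ is equally clean: the base case $\ell=0$ is $J^kf=J^kf$, and the inductive step reduces to the single-derivative case just verified, using that the differentiations in $x$ and in $\xi$ commute with one another.
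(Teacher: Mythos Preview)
Your proof is correct and follows essentially the same approach as the paper: both compute each side by differentiating under the integral sign and observe that the resulting integrals coincide. Your additional remark justifying the interchange via dominated convergence is a welcome detail that the paper leaves implicit.
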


\begin{proof}
	Applying the operator $\frac{\partial^\ell}{\partial x^{j_1}\dots\partial x^{j_\ell}}$ to the equality
	$$
	(J^k\!f)(x,\xi)=\int\limits_{-\infty}^\infty t^k\, f(x+t\xi)\,dt,
	$$
	we obtain
	\begin{equation}
	\frac{\partial^\ell(J^kf)}{\partial x^{j_1}\dots\partial x^{j_\ell}}(x,\xi)=
	\int\limits_{-\infty}^\infty t^k\,
	\frac{\partial^\ell f}{\partial x^{j_1}\dots\partial x^{j_\ell}}(x+t\xi)\,dt.
	\label{Eq2.4}
	\end{equation}
	On the other hand, applying the operator $\frac{\partial^\ell}{\partial \xi^{j_1}\dots\partial \xi^{j_\ell}}$ to the equality
	$$
	(J^{k-\ell}\!f)(x,\xi)=\int\limits_{-\infty}^\infty t^{k-\ell}\, f(x+t\xi)\,dt,
	$$
	we obtain
	\begin{equation}
	\frac{\partial^\ell(J^{k-\ell}f)}{\partial \xi^{j_1}\dots\partial \xi^{j_\ell}}(x,\xi)=
	\int\limits_{-\infty}^\infty t^k\,
	\frac{\partial^\ell f}{\partial x^{j_1}\dots\partial x^{j_\ell}}(x+t\xi)\,dt.
	                          \label{Eq2.5}
	\end{equation}
	Formulas \eqref{Eq2.4} and \eqref{Eq2.5} imply \eqref{Eq2.3}.
\end{proof}

\begin{lemma} \label{L2.4}
	Given an integer $m\ge0$, define the operators
	$$
	J^{m,k}:{\mathcal S}({\R}^n;S^k{\R}^n)\rightarrow C^\infty\big({\R}^n\times({\R}^n\setminus\{0\})\big)\quad(k=0,1,\dots)
	$$
	by
	\begin{equation}
	(J^{m,k}\!f)(x,\xi)=\int\limits_{-\infty}^\infty t^m\,\l f(x+t\xi),\xi^k\r\,dt.
	\label{Eq2.6}
	\end{equation}
	Then for every $k$ and every $f\in{\mathcal S}({\R}^n;S^k{\R}^n)$,
	\begin{equation}
	J_{i_1j_1}\dots J_{i_{k+1}j_{k+1}}(J^{m,k}\!f)=0\quad\mbox{for all}\quad 1\le i_1,j_1,\dots i_{k+1},j_{k+1}\le n,
	\label{Eq2.7}
	\end{equation}
	where $J_{ij}=\frac{\partial^2}{\partial x^i\partial\xi^j}-\frac{\partial^2}{\partial x^j\partial\xi^i}$ is the John operator.
\end{lemma}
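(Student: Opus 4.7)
The plan is to proceed by induction on the tensor rank $k$, based on the reduction identity
$$
J_{ij}(J^{m,k}f) = J^{m,k-1}h_{ij},
$$
in which $h_{ij}$ is a rank $(k-1)$ symmetric tensor field built from first-order derivatives of $f$. The base case $k=0$ is immediate: for a scalar $f\in\Sc(\R^n)$, differentiation under the integral gives
$$
\frac{\partial^{2}}{\partial x^{i}\,\partial\xi^{j}}(J^{m,0}f)(x,\xi)=\int_{-\infty}^{\infty}t^{m+1}\,\partial_{i}\partial_{j}f(x+t\xi)\,dt,
$$
which is manifestly symmetric in $(i,j)$, so $J_{ij}(J^{m,0}f)=0$.

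For the inductive step, write $F(y,\xi)=\l f(y),\xi^{k}\r$, so that $(J^{m,k}f)(x,\xi)=\int_{-\infty}^{\infty}t^{m}\,F(x+t\xi,\xi)\,dt$. Differentiating under the integral and applying the chain rule (the $\xi^{j}$-derivative acts both on the first slot of $F$, producing the factor $t$, and on the second slot directly) yields
$$
\frac{\partial^{2}(J^{m,k}f)}{\partial x^{i}\,\partial\xi^{j}}(x,\xi)=\int_{-\infty}^{\infty}t^{m+1}\partial_{y^{i}}\partial_{y^{j}}F(x+t\xi,\xi)\,dt+\int_{-\infty}^{\infty}t^{m}\,\partial_{y^{i}}\partial_{\xi^{j}}F(x+t\xi,\xi)\,dt.
$$
The first integral is symmetric in $(i,j)$ and therefore cancels in $J_{ij}(J^{m,k}f)$. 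Using the symmetry of $f$, a short calculation gives $\partial_{\xi^{j}}F(y,\xi)=k\,f_{j i_{2}\dots i_{k}}(y)\,\xi^{i_{2}}\cdots\xi^{i_{k}}$, whence
$$
\partial_{y^{i}}\partial_{\xi^{j}}F-\partial_{y^{j}}\partial_{\xi^{i}}F=k\bigl(\partial_{i}f_{j i_{2}\dots i_{k}}-\partial_{j}f_{i i_{2}\dots i_{k}}\bigr)\,\xi^{i_{2}}\cdots\xi^{i_{k}}=\l h_{ij}(y),\xi^{k-1}\r,
$$
where $h_{ij}\in\Sc(\R^{n};S^{k-1}\R^{n})$ is symmetric in the remaining $k-1$ indices. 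This establishes $J_{ij}(J^{m,k}f)=J^{m,k-1}h_{ij}$. Since the John operators $J_{ij}$ have constant coefficients and therefore commute, applying the remaining $k$ factors $J_{i_{2}j_{2}}\cdots J_{i_{k+1}j_{k+1}}$ to $J^{m,k-1}h_{i_{1}j_{1}}$ and invoking the induction hypothesis on the rank-$(k-1)$ field $h_{i_{1}j_{1}}$ yields zero, completing the induction.

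The only step requiring care is the reduction identity: one must observe that the chain-rule contribution proportional to $t^{m+1}$ is automatically symmetric in $(i,j)$ and is therefore annihilated by the antisymmetric combination $J_{ij}$. Once this cancellation is identified, the weight $t^{m}$ plays no further role and the induction on tensor rank works uniformly in $m$, so the argument reduces to bookkeeping.
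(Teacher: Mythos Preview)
Your proof is correct and follows essentially the same approach as the paper: induction on the tensor rank $k$, with the base case $k=0$ handled by the symmetry of $\int t^{m+1}\partial_i\partial_j f\,dt$, and the inductive step via the reduction identity $J_{ij}(J^{m,k}f)=J^{m,k-1}h_{ij}$ for the same rank-$(k-1)$ field $h_{ij}$. The only cosmetic difference is that you package the integrand as $F(y,\xi)=\langle f(y),\xi^k\rangle$ and invoke commutativity of the $J_{ij}$, whereas the paper works directly with the components; the substance is identical.
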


{\bf Remark.} We hope the reader is not confused by using the letter $J$ in two different senses: $J_{ij}$ is the John operator while $J^{m,k}$ is defined by \eqref{Eq2.6}. Both notations are standard.

\begin{proof}
In the case of $k=0$, \eqref{Eq2.6} reads
$$
(J^{m,0}\!f)(x,\xi)=\int\limits_{-\infty}^\infty t^m\, f(x+t\xi)\,dt.
$$
Differentiating this equality, we obtain
$$
\frac{\partial^2(J^{m,0}\!f)}{\partial x^i\partial\xi^j}(x,\xi)=\int\limits_{-\infty}^\infty\! t^{m+1}
\frac{\partial^2 f}{\partial x^i\partial x^j}(x+t\xi)\,dt.
$$
The right-hand side is symmetric in $(i,j)$. Therefore
$$
J_{ij}(J^{m,0}\!f)=
\Big(\frac{\partial^2(J^{m,0}\!f)}{\partial x^i\partial\xi^j}-\frac{\partial^2(J^{m,0}\!f)}{\partial x^j\partial\xi^i}\Big)=0.
$$
This proves the statement of the lemma for $k=0$. Then we proceed by induction in $k$.

Let $k>0$. Differentiating \eqref{Eq2.6}, we obtain
$$
\frac{\partial^2(J^{m,k}\!f)}{\partial x^i\partial\xi^j}(x,\xi)=\int\limits_{-\infty}^\infty\! t^{m+1}
\Big\langle \frac{\partial^2 f}{\partial x^i\partial x^j}(x+t\xi),\xi^k\Big\rangle\,dt
+k\!\int\limits_{-\infty}^\infty\! t^m
\frac{\partial f_{i_1\dots i_{k-1}j}}{\partial x^i}(x+t\xi)\,\xi^{i_1}\dots\xi^{i_{k-1}}\,dt.
$$
The first summand on the right-hand side of this equality is symmetric in $(i,j)$. Therefore
\begin{equation}
\big(J_{ij}(J^{m,k}\!f)\big)(x,\xi)=k\int\limits_{-\infty}^\infty t^m\,
\Big(\frac{\partial f_{i_1\dots i_{k-1}j}}{\partial x^i}(x+t\xi)
-\frac{\partial f_{i_1\dots i_{k-1}i}}{\partial x^j}(x+t\xi)\Big)\xi^{i_1}\dots\xi^{i_{k-1}}\,dt.
                       \label{Eq2.8}
\end{equation}

Let us fix indices $i$ and $j$ and define the tensor field
$h_{ij}\in {\mathcal S}({\R}^n;S^{k-1}{\R}^n)$ by
$$
(h_{ij})_{i_1\dots i_{k-1}}=k\Big(\frac{\partial f_{i_1\dots i_{k-1}j}}{\partial x^i}-\frac{\partial f_{i_1\dots i_{k-1}i}}{\partial x^j}\Big).
$$
Then \eqref{Eq2.8} can be written as
\begin{equation}
J_{ij}(J^{m,k}\!f)=J^{m,k-1}h_{ij}.
                         \label{Eq2.9}
\end{equation}
By the induction hypothesis,
$$
J_{i_1j_1}\dots J_{i_mj_m}(J^{m,k-1}h_{ij})=0\quad\mbox{for all}\quad 1\le i_1,j_1,\dots,i_m,j_m\le n.
$$
Together with \eqref{Eq2.9}, this gives
$$
J_{i_1j_1}\dots J_{i_mj_m}J_{ij}(J^{m,k}\!f)=0\quad\mbox{for all}\quad 1\le i_1,j_1,\dots,i_m,j_m,i,j\le n.
$$
This coincides with \eqref{Eq2.7}.
\end{proof}

\begin{lemma} \label{L2.5}
Let a function $\psi\in C^\infty\big({\R}^n\times{\R}^n\setminus\{0\})\big)$ be positively homogeneous in the second argument
\begin{equation}
\psi(x,t\xi)=t^\lambda\psi(x,\xi)\quad(t>0).
                                         \label{Eq2.10}
\end{equation}
Assume the restriction $\psi|_{T{\mathbb S}^{n-1}}$ to belong to ${\mathcal S}(T{\mathbb S}^{n-1})$. Assume also that restrictions to ${\mathcal S}(T{\mathbb S}^{n-1})$ of the function $\l\xi,\partial_x\r\psi$ and of all its derivatives belong to ${\mathcal S}(T{\mathbb S}^{n-1})$, i.e.,
\begin{equation}
\left.\frac{\partial^{k+\ell}(\l\xi,\partial_x\r\psi)}{\partial x^{i_1}\dots\partial x^{i_k}\partial \xi^{j_1}\dots\partial \xi^{j_\ell}}\right|_{T{\mathbb S}^{n-1}}\in{\mathcal S}(T{\mathbb S}^{n-1})\quad\mbox{for all}\quad 1\le i_1,\dots,i_k,j_1,\dots,j_\ell\le n.
                                         \label{Eq2.11}
\end{equation}
Then the restriction to ${\mathcal S}(T{\mathbb S}^{n-1})$ of every derivative of $\psi$ also belongs to ${\mathcal S}(T{\mathbb S}^{n-1})$, i.e.,
\begin{equation}
\left.\frac{\partial^{k+\ell}\psi}{\partial x^{i_1}\dots\partial x^{i_k}\partial \xi^{j_1}\dots\partial \xi^{j_\ell}}\right|_{T{\mathbb S}^{n-1}}
\in{\mathcal S}(T{\mathbb S}^{n-1})\quad\mbox{for all}\quad 1\le i_1,\dots,i_k,j_1,\dots,j_\ell\le n.
                                         \label{Eq2.12}
\end{equation}
\end{lemma}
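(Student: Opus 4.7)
The plan is to change to coordinates adapted to the submanifold $T\mathbb{S}^{n-1}\hookrightarrow\mathbb{R}^n\times(\mathbb{R}^n\setminus\{0\})$ in which the two normal directions are transparent, and then to deduce Schwartz control of every intrinsic derivative of $\psi$ from the two given hypotheses. Concretely, I would fix a local parameterization $z\mapsto\hat\xi(z)$ of $\mathbb{S}^{n-1}$ and write any nearby point of the ambient space as $\xi=t\hat\xi(z)$, $x=y+s\hat\xi(z)$ with $y\in\hat\xi(z)^\perp$, $s\in\mathbb{R}$, and $t>0$. The submanifold $T\mathbb{S}^{n-1}$ is then cut out by $\{s=0,\,t=1\}$, with $(y,z)$ serving as intrinsic coordinates; the two normal operators are $\langle\xi,\partial_x\rangle=t\,\partial_s$ and $\langle\xi,\partial_\xi\rangle=t\,\partial_t$.

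In the second step I would invoke the homogeneity \eqref{Eq2.10} to write $\psi=t^\lambda\Psi(y,z,s)$, so that every $\partial_t$-derivative of $\psi$ at $t=1$ reduces to a numerical multiple of some $\partial_y^a\partial_s^b\partial_z^c\Psi|_{s=0}$. The hypothesis $\psi|_{T\mathbb{S}^{n-1}}\in\mathcal{S}(T\mathbb{S}^{n-1})$ says precisely that $\partial_y^a\partial_z^c\Psi|_{s=0}$ is Schwartz in $y$ for all $a$ and $c$, and rewriting hypothesis \eqref{Eq2.11} for $\langle\xi,\partial_x\rangle\psi=t^{\lambda+1}\partial_s\Psi$ in the new coordinates gives that $\partial_y^a\partial_s^{b+1}\partial_z^c\Psi|_{s=0}$ is Schwartz for all $a,b,c\ge0$. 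Together these furnish Schwartzness of every mixed intrinsic derivative $\partial_y^a\partial_s^b\partial_z^c\Psi|_{s=0}$.

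To obtain \eqref{Eq2.12}, I would apply the chain rule to express each ambient derivative $\partial/\partial x^i$, $\partial/\partial\xi^j$ as a linear combination of $\partial_y,\partial_s,\partial_z,\partial_t$. A direct computation shows that on $T\mathbb{S}^{n-1}$ only $\partial y^k/\partial\xi^j=-y^j\xi^k$ and $\partial s/\partial\xi^j=y^j$ depend on $y$, and they do so linearly; all other Jacobian entries are bounded smooth functions of $\xi$ alone. Iterating, any $\partial_x^\alpha\partial_\xi^\beta\psi|_{T\mathbb{S}^{n-1}}$ expands as a finite sum of terms of the form $P(y,z)\,(\partial_y^a\partial_s^b\partial_z^c\Psi)|_{s=0}$ with $P$ polynomial in $y$ of degree at most $|\alpha|+|\beta|$, and each such term is Schwartz since the Schwartz class is closed under multiplication by polynomials. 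The one genuinely nontrivial point — where I expect the real work to lie — is this last bookkeeping step: one must verify that iterating the polynomial-in-$y$ Jacobian through nested derivatives produces no unbounded coefficients in $\xi$ or negative powers of $t$, so that the final expansion consists only of polynomial coefficients times Schwartz factors evaluated at $s=0$.
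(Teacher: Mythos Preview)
Your approach is correct and genuinely different from the paper's. The paper never introduces adapted coordinates; instead it writes down explicit vector fields $\tilde X_i=\partial_{x^i}-\xi_i\langle\xi,\partial_x\rangle$ and $\tilde\Xi_i=\partial_{\xi^i}-x_i\langle\xi,\partial_x\rangle-\xi_i\langle\xi,\partial_\xi\rangle$ on ${\R}^n\times({\R}^n\setminus\{0\})$ that are tangent to $T\mathbb{S}^{n-1}$, so that their restrictions $X_i,\Xi_i$ are intrinsic operators preserving $\mathcal{S}(T\mathbb{S}^{n-1})$. Each ambient partial is then $\tilde X_i$ (or $\tilde\Xi_i$) plus a correction involving $\langle\xi,\partial_x\rangle\psi$ and, via Euler's relation, $\psi$ itself; the proof proceeds by induction on the number $\ell$ of $\xi$-derivatives, using the commutator formula of Lemma~\ref{L2.2} to push $\langle\xi,\partial_x\rangle$ through the $\partial_\xi$'s. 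What the paper's route buys is that everything is global and coordinate-free on the sphere: no charts, no moving frames for the fibres $\hat\xi^\perp$, and the only algebraic input is the commutator identity already available. What your route buys is conceptual transparency---the two normal directions are literally $\partial_s$ and $\partial_t$, one killed by homogeneity and the other controlled by \eqref{Eq2.11}---at the cost of the chain-rule bookkeeping you flag. That bookkeeping is indeed routine: the only Jacobian entries that grow are linear in $x$ (hence in $y$ once $s=0$), iteration raises the degree by at most one per derivative, and since $t=1$ and $z$ ranges over the compact sphere no other unboundedness can occur. One small notational point to tidy: your formulas $\partial y^k/\partial\xi^j=-y^j\xi^k$ and $\partial s/\partial\xi^j=y^j$ treat $y^1,\dots,y^n$ as the \emph{ambient} components of $y\in\hat\xi^\perp$, which are not independent coordinates; when you write $\partial_y^a\Psi$ you mean derivatives with respect to $n-1$ genuine fibre coordinates, so a moving frame $e_1(z),\dots,e_{n-1}(z)$ should appear explicitly once you carry out the computation.
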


\begin{proof}
For $1\le i\le n$, we define first order differential operators on ${\R}^n\times({\R}^n\setminus\{0\})$
\begin{equation}
{\tilde X}_i=\frac{\partial}{\partial x^i}-\xi_i\l\xi,\partial_x\r,\quad
{\tilde \Xi}_i=\frac{\partial}{\partial \xi^i}-x_i\l\xi,\partial_x\r-\xi_i\l\xi,\partial_\xi\r.
                                         \label{Eq2.13}
\end{equation}
Being considered as vector fields on ${\R}^n\times({\R}^n\setminus\{0\})$, ${\tilde X}_i$ and ${\tilde \Xi}_i$ are tangent to $T{\mathbb S}^{n-1}$ at every point of the latter submanifold, see \cite[Lemma 4.1]{KMSS}. Let $X_i$ and $\Xi_i$ be the restrictions to $T{\mathbb S}^{n-1}$ of ${\tilde X}_i$ and ${\tilde \Xi}_i$ respectively. Thus, $X_i$ and $\Xi_i$ are well defined first order differential operators on $T{\mathbb S}^{n-1}$. Obviously,
$$
X_i,\Xi_i:{\mathcal S}(T{\mathbb S}^{n-1})\rightarrow{\mathcal S}(T{\mathbb S}^{n-1})\quad(1\le i\le n).
$$

Let $\varphi=\psi|_{T{\mathbb S}^{n-1}}\in{\mathcal S}(T{\mathbb S}^{n-1})$. Then
\begin{equation}
{\tilde X}_i\psi|_{T{\mathbb S}^{n-1}}=X_i\varphi,\quad {\tilde \Xi}_i\psi|_{T{\mathbb S}^{n-1}}=\Xi_i\varphi.
                                         \label{Eq2.14}
\end{equation}

By the Euler equation for homogeneous functions,
$$
\l\xi,\partial_\xi\r\psi=\lambda\psi.
$$
With the help of the last formula, \eqref{Eq2.13} gives
$$
\frac{\partial\psi}{\partial x^i}={\tilde X}_i\psi+\xi_i\l\xi,\partial_x\r\psi,\quad
\frac{\partial\psi}{\partial \xi^i}={\tilde \Xi}_i\psi+x_i\l\xi,\partial_x\r\psi+\lambda\xi_i\psi.
$$
Together with \eqref{Eq2.14}, this implies
$$
\begin{aligned}
\left.\frac{\partial\psi}{\partial x^i}\right|_{T{\mathbb S}^{n-1}}&=X_i\varphi
+(\xi_i\l\xi,\partial_x\r\psi)|_{T{\mathbb S}^{n-1}},\\
\left.\frac{\partial\psi}{\partial \xi^i}\right|_{T{\mathbb S}^{n-1}}&=\Xi_i\varphi
+(x_i\l\xi,\partial_x\r\psi)|_{T{\mathbb S}^{n-1}}+\lambda\xi_i\varphi.
\end{aligned}
$$
All terms on right-hand sides of these equalities belong to ${\mathcal S}(T{\mathbb S}^{n-1})$. Thus,
$$
\left.\frac{\partial\psi}{\partial x^i}\right|_{T{\mathbb S}^{n-1}}\in{\mathcal S}(T{\mathbb S}^{n-1}),\quad
\left.\frac{\partial\psi}{\partial \xi^i}\right|_{T{\mathbb S}^{n-1}}\in{\mathcal S}(T{\mathbb S}^{n-1}).
$$
We have thus proved the statement of the lemma for first order derivatives. In the case of $\ell=0$, \eqref{Eq2.12} is proved actually in the same way.
Indeed, since the operator $\l\xi,\partial_x\r$ commutes with the derivatives $\frac{\partial}{\partial x^i}$, every derivative $\frac{\partial^k\psi}{\partial x^{i_i}\dots\partial x^{i_k}}$ satisfy hypotheses \eqref{Eq2.10} and \eqref{Eq2.11} of the lemma. On using this fact, we easily prove \eqref{Eq2.12} for $\ell=0$ by induction in $k$.

In the case of $\ell\neq0$, the proof is more complicated since the operator $\l\xi,\partial_x\r$ does not commute with the derivatives $\frac{\partial}{\partial \xi^j}$. Nevertheless, we have the commutator formula
\begin{equation}
\l\xi,\partial_x\r\frac{\partial^\ell}{\partial \xi^{j_1}\dots\partial \xi^{j_\ell}}
=\frac{\partial^\ell}{\partial \xi^{j_1}\dots\partial \xi^{j_\ell}}\l\xi,\partial_x\r
-\ell\,\sigma(j_1\dots j_\ell)\frac{\partial^\ell}{\partial x^{j_1}\partial \xi^{j_2}\dots\partial \xi^{j_\ell}}
                                         \label{Eq2.15}
\end{equation}
which is a partial case of Lemma \ref{L2.2}.

Now we prove \eqref{Eq2.12} by induction on $\ell$. Assume that
$$
\left.\frac{\partial^{k+s}\psi}{\partial x^{i_1}\dots\partial x^{i_k}\partial \xi^{j_1}\dots\partial \xi^{j_s}}\right|_{T{\mathbb S}^{n-1}}
\in {\mathcal S}(T{\mathbb S}^{n-1})\quad\mbox{for}\quad s\le\ell
$$
with some $\ell$.

Let $\ell\ge1$. From \eqref{Eq2.13},
$$
\begin{aligned}
\frac{\partial^{k+\ell+1}\psi}{\partial x^{i_1}\dots\partial x^{i_k}\partial \xi^{j_1}\dots\partial \xi^{j_{\ell+1}}}
&=\tilde\Xi_{j_{\ell+1}}\frac{\partial^{k+\ell}\psi}{\partial x^{i_1}\dots\partial x^{i_k}\partial \xi^{j_1}\dots\partial \xi^{j_\ell}}\\
&+x_{j_{\ell+1}}\l\xi,\partial_x\r\frac{\partial^{k+\ell}\psi}{\partial x^{i_1}\dots\partial x^{i_k}\partial \xi^{j_1}\dots\partial \xi^{j_\ell}}\\
&+\xi_{j_{\ell+1}}\l\xi,\partial_\xi\r\frac{\partial^{k+\ell}\psi}{\partial x^{i_1}\dots\partial x^{i_k}\partial \xi^{j_1}\dots\partial \xi^{j_\ell}}.
\end{aligned}
$$
Since $\frac{\partial^{k+\ell}\psi}{\partial x^{i_1}\dots\partial x^{i_k}\partial \xi^{j_1}\dots\partial \xi^{j_\ell}}$ is positively homogeneous of degree $\lambda-\ell$ in $\xi$, the formula becomes
\begin{equation}
\begin{aligned}
\frac{\partial^{k+\ell+1}\psi}{\partial x^{i_1}\dots\partial x^{i_k}\partial \xi^{j_1}\dots\partial \xi^{j_{\ell+1}}}
&=\tilde\Xi_{j_{\ell+1}}\frac{\partial^{k+\ell}\psi}{\partial x^{i_1}\dots\partial x^{i_k}\partial \xi^{j_1}\dots\partial \xi^{j_\ell}}\\
&+x_{j_{\ell+1}}\l\xi,\partial_x\r\frac{\partial^{k+\ell}\psi}{\partial x^{i_1}\dots\partial x^{i_k}\partial \xi^{j_1}\dots\partial \xi^{j_\ell}}\\
&+(\lambda-\ell)\xi_{j_{\ell+1}}\frac{\partial^{k+\ell}\psi}{\partial x^{i_1}\dots\partial x^{i_k}\partial \xi^{j_1}\dots\partial \xi^{j_\ell}}.
\end{aligned}
                                         \label{Eq2.16}
\end{equation}

We transform the second term on the right hand side of \eqref{Eq2.16} with the help of \eqref{Eq2.15}
$$
x_{j_{\ell+1}}\l\xi,\partial_x\r\frac{\partial^{k+\ell}\psi}{\partial x^{i_1}\dots\partial x^{i_k}\partial \xi^{j_1}\dots\partial \xi^{j_\ell}}
=x_{j_{\ell+1}}\frac{\partial^{k+\ell}(\l\xi,\partial_x\r\psi)}{\partial x^{i_1}\dots\partial x^{i_k}\partial \xi^{j_1}\dots\partial \xi^{j_\ell}}
+\chi_{i_1\dots i_k j_1\dots j_{\ell+1}},
$$
where
$$
\chi_{i_1\dots i_k j_1\dots j_{\ell+1}}=-\ell\,x_{j_{\ell+1}}\sigma(j_1\dots j_\ell)\,
\frac{\partial^{k+\ell}\psi}{\partial x^{i_1}\dots\partial x^{i_k}\partial x^{j_1}
\partial \xi^{j_2}\dots\partial \xi^{j_\ell}}.
$$
By the induction hypothesis,
\begin{equation}
\chi_{i_1\dots i_k j_1\dots j_{\ell+1}}|_{T{\mathbb S}^{n-1}}\in {\mathcal S}(T{\mathbb S}^{n-1}).
                                         \label{Eq2.17}
\end{equation}
Formula \eqref{Eq2.16} becomes now
$$
\begin{aligned}
&\frac{\partial^{k+\ell+1}\psi}{\partial x^{i_1}\dots\partial x^{i_k}\partial \xi^{j_1}\dots\partial \xi^{j_{\ell+1}}}
=\tilde\Xi_{j_{\ell+1}}\frac{\partial^{k+\ell}\psi}{\partial x^{i_1}\dots\partial x^{i_k}\partial \xi^{j_1}\dots\partial \xi^{j_\ell}}\\
&+x_{j_{\ell+1}}\frac{\partial^{k+\ell}(\l\xi,\partial_x\r\psi)}{\partial x^{i_1}\dots\partial x^{i_k}\partial \xi^{j_1}\dots\partial \xi^{j_\ell}}
+\chi_{i_1\dots i_k j_1\dots j_{\ell+1}}
+(\lambda-\ell)\xi_{j_{\ell+1}}\frac{\partial^{k+\ell}\psi}{\partial x^{i_1}\dots\partial x^{i_k}\partial \xi^{j_1}\dots\partial \xi^{j_\ell}}.
\end{aligned}
$$
Taking the restriction to $T{\mathbb S}^{n-1}$, we have
\begin{equation}
\begin{aligned}
\left.\frac{\partial^{k+\ell+1}\psi}{\partial x^{i_1}\dots\partial x^{i_k}\partial \xi^{j_1}\dots\partial \xi^{j_{\ell+1}}}\right|_{T{\mathbb S}^{n-1}}
&=\left.\Big(\tilde\Xi_{j_{\ell+1}}\frac{\partial^{k+\ell}\psi}{\partial x^{i_1}\dots\partial x^{i_k}\partial \xi^{j_1}\dots\partial \xi^{j_\ell}}
\Big)\right|_{T{\mathbb S}^{n-1}}\\
&+x_{j_{\ell+1}}
\left.\frac{\partial^{k+\ell}(\l\xi,\partial_x\r\psi)}{\partial x^{i_1}\dots\partial x^{i_k}\partial \xi^{j_1}\dots\partial \xi^{j_\ell}}
\right|_{T{\mathbb S}^{n-1}}\\
&+\chi_{i_1\dots i_k j_1\dots j_{\ell+1}}|_{T{\mathbb S}^{n-1}}\\
&+\left.(\lambda-\ell)\xi_{j_{\ell+1}}\frac{\partial^{k+\ell}\psi}{\partial x^{i_1}\dots\partial x^{i_k}\partial \xi^{j_1}\dots\partial \xi^{j_\ell}}
\right|_{T{\mathbb S}^{n-1}}.
\end{aligned}
                                         \label{Eq2.18}
\end{equation}

By the induction hypothesis,
$$
\left.\frac{\partial^{k+\ell}\psi}{\partial x^{i_1}\dots\partial x^{i_k}\partial \xi^{j_1}\dots\partial \xi^{j_\ell}}
\right|_{T{\mathbb S}^{n-1}}\in {\mathcal S}(T{\mathbb S}^{n-1}).
$$
Therefore
$$
\left.\Big(\tilde\Xi_{j_{\ell+1}}\frac{\partial^{k+\ell}\psi}{\partial x^{i_1}\dots\partial x^{i_k}\partial \xi^{j_1}\dots\partial \xi^{j_\ell}}
\Big)\right|_{T{\mathbb S}^{n-1}}=
\Xi_{j_{\ell+1}}\Bigg(\left.\frac{\partial^{k+\ell}\psi}{\partial x^{i_1}\dots\partial x^{i_k}\partial \xi^{j_1}\dots\partial \xi^{j_\ell}}
\right|_{T{\mathbb S}^{n-1}}\Bigg).
$$
Formula \eqref{Eq2.18} becomes
$$
\begin{aligned}
\left.\frac{\partial^{k+\ell+1}\psi}{\partial x^{i_1}\dots\partial x^{i_k}\partial \xi^{j_1}\dots\partial \xi^{j_{\ell+1}}}\right|_{T{\mathbb S}^{n-1}}
&=\Xi_{j_{\ell+1}}\Bigg(\left.\frac{\partial^{k+\ell}\psi}{\partial x^{i_1}\dots\partial x^{i_k}\partial \xi^{j_1}\dots\partial \xi^{j_\ell}}
\right|_{T{\mathbb S}^{n-1}}\Bigg)\\
&+x_{j_{\ell+1}}
\left.\frac{\partial^{k+\ell}(\l\xi,\partial_x\r\psi)}{\partial x^{i_1}\dots\partial x^{i_k}\partial \xi^{j_1}\dots\partial \xi^{j_\ell}}
\right|_{T{\mathbb S}^{n-1}}\\
&+\chi_{i_1\dots i_k j_1\dots j_{\ell+1}}|_{T{\mathbb S}^{n-1}}\\
&+\left.(\lambda-\ell)\xi_{j_{\ell+1}}\frac{\partial^{k+\ell}\psi}{\partial x^{i_1}\dots\partial x^{i_k}\partial \xi^{j_1}\dots\partial \xi^{j_\ell}}
\right|_{T{\mathbb S}^{n-1}}.
\end{aligned}
$$
The second and third terms on the right hand side belong to ${\mathcal S}(T{\mathbb S}^{n-1})$ by \eqref{Eq2.11} and \eqref{Eq2.17} respectively.
The last term on the right-hand side belongs to ${\mathcal S}(T{\mathbb S}^{n-1})$ by the induction hypothesis. Finally, the first term on the right-hand side belongs to ${\mathcal S}(T{\mathbb S}^{n-1})$ since $\Xi_{j_{\ell+1}}$ is an intrinsic operator on $T{\mathbb S}^{n-1}$. Thus
$$
\left.\frac{\partial^{k+\ell+1}\psi}{\partial x^{i_1}\dots\partial x^{i_k}\partial \xi^{j_1}\dots\partial \xi^{j_{\ell+1}}}\right|_{T{\mathbb S}^{n-1}}
\in{\mathcal S}(T{\mathbb S}^{n-1}).
$$
This finishes the induction step.
\end{proof}

\subsection{Necessity}

\begin{proof}[Proof of the necessity part in Theorem \ref{Th1.3}]
Given a tensor field $f\in{\mathcal S}({\R}^n;S^m{\R}^n)$, let
$
\varphi^k=I^k\!f\in{\mathcal S}(T{\mathbb S}^{n-1})\ (k=0,1,\dots,m).
$
The definition \eqref{Eq1.10a} implies
$$
\varphi^k(x,-\xi)=(-1)^m\int\limits_{-\infty}^\infty t^k\,\l f(x-t\xi),\xi^m\r\,dt
$$
Changing the integration variable as $t:=-t$, we obtain
$$
\varphi^k(x,-\xi)=(-1)^{m-k}\int\limits_{-\infty}^\infty t^k\,\l f(x+t\xi),\xi^m\r\,dt=(-1)^{m-k}\varphi^k(x,\xi).
$$
This proves property (1) of Theorem \ref{Th1.3}.

Let the function $\psi^m\in C^\infty\big({\R}^n\times({\R}^n\setminus\{0\})\big)$ be defined by \eqref{Eq1.14} with $k=m$. This means
\begin{equation}
\psi^m=J^m\!f.
\label{Eq2.19}
\end{equation}

Observe that $J^m=J^{m,m}$ as is seen from \eqref{Eq2.6} and \eqref{Eq1.11}.
By Lemma \ref{L2.4},
$$
J_{i_1j_1}\dots J_{i_{m+1}j_{m+1}}(J^m\!f)=0\quad\mbox{for all}\quad 1\le i_1,j_1,\dots i_{m+1},j_{m+1}\le n.
$$
Together with \eqref{Eq2.19}, this gives \eqref{Eq1.15}.
\end{proof}

\subsection{Main Lemma}
The following statement is the most essential part of the proof of Theorem \ref{Th1.3}. 

\begin{lemma} \label{S2.7}
Given an integer $m\ge0$, let a function $\psi^m\in C^\infty\big({\R}^n\times({\R}^n\setminus\{0\})\big)$ satisfy \eqref{Eq1.15} and
\begin{equation}
\psi^m(x,t\xi)=\frac{1}{|t|}\,\psi^m(x,\xi)\quad\mbox{for}\quad 0\neq t\in\R.
                                         \label{Eq2.28}
\end{equation}
Define the functions $\psi_{i_1\dots i_m}\in C^\infty\big({\R}^n\times({\R}^n\setminus\{0\})\big)$ for all indices $1\le i_1,\dots,i_m\le n$ by
\begin{equation}
\psi_{i_1\dots i_m}=\frac{(-1)^m}{m!}\sigma(i_1\dots i_m)\Big(\sum\limits_{k=0}^m\frac{1}{(m-k)!}\,\frac{\partial^m}
{\partial x^{i_1}\dots\partial x^{i_k}\partial\xi^{i_{k+1}}\dots\partial\xi^{i_m}}\l\xi,\partial_x\r^{m-k}\Big)\psi^m.
                                         \label{Eq2.22}
\end{equation}
Then
\begin{equation}
J_{ij}\psi_{i_1\dots i_m}=0\quad\mbox{for all}\quad 1\le i,j\le n.
                                         \label{Eq2.29}
\end{equation}
\end{lemma}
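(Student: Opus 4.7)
The plan is to reduce the claim to a single scalar identity in the Heisenberg-like algebra generated by $X=\l\zeta,\partial_x\r$, $Y=\l\xi,\partial_x\r$, and $\Xi=\l\zeta,\partial_\xi\r$ (with $\zeta\in\R^n$ an auxiliary vector), and then to derive that identity by contracting the $m$-fold John equation with $\xi^{\otimes m}\otimes\zeta^{\otimes m}$ and invoking Lemma \ref{L2.1} together with Euler's homogeneity formula.

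First I would observe that $J_{ij}$ commutes with $\partial_{x^k}$, $\partial_{\xi^k}$, and also with $\l\xi,\partial_x\r$; the last by a one-line computation since $[\partial_{\xi^j},\l\xi,\partial_x\r]=\partial_{x^j}$ is symmetric in the two terms of $J_{ij}$. Consequently $J_{ij}$ passes through the entire differential operator on the right-hand side of \eqref{Eq2.22}, so $J_{ij}\psi_{i_1\dots i_m}$ is obtained from \eqref{Eq2.22} by replacing $\psi^m$ with $\chi:=J_{ij}\psi^m$. By \eqref{Eq2.28} and the extra $\partial_\xi$ in $J_{ij}$, $\chi$ is positively homogeneous of $\xi$-degree $-2$; and by the $(m+1)$-fold equation \eqref{Eq1.15} it satisfies the $m$-fold John equation $J_{k_1l_1}\cdots J_{k_ml_m}\chi=0$ for all indices. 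The lemma therefore reduces to showing that the symmetrized order-$m$ operator in \eqref{Eq2.22} annihilates any such $\chi$.

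To pass to a scalar identity I would contract the free indices with $\zeta^{i_1}\cdots\zeta^{i_m}$: by polarization, the desired vanishing is equivalent to
$$\Phi_m[\chi]:=\sum_{k=0}^m\frac{1}{(m-k)!}X^k\Xi^{m-k}Y^{m-k}\chi=0$$
for every $\zeta$. Using the only nontrivial commutator $[\Xi,Y]=X$ (with $X$ central) and the Heisenberg-type identity $\Xi^qY^q=\sum_{j}\binom{q}{j}^2 j!\,Y^{q-j}\Xi^{q-j}X^j$, a hockey-stick summation rearranges $\Phi_m$ into the normal-ordered form $\Phi_m[\chi]=\sum_{p=0}^m\frac{\binom{m+1}{p}}{(m-p)!}X^pY^{m-p}\Xi^{m-p}\chi$.

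The decisive calculation is to match this against the contracted $m$-fold John equation $\xi^{k_1}\cdots\xi^{k_m}\zeta^{l_1}\cdots\zeta^{l_m}J_{k_1l_1}\cdots J_{k_ml_m}\chi=0$. Expanding each $J_{k_il_i}=A_{k_i}^{l_i}-A_{l_i}^{k_i}$ with $A_k^l=\partial_{x^k}\partial_{\xi^l}$ and grouping by the number $p$ of ``swaps'', each of the $2^m$ resulting terms is a tensor contraction of a $2m$-derivative of $\chi$ against $\xi^{\otimes m}\otimes\zeta^{\otimes m}$. Because all partials commute, the $x$-slot pairings cleanly produce $Y^{m-p}X^p$; the $\xi$-slot pairings yield $\Xi^{m-p}$ together with a $p$-fold product $\sum\xi^{a_1}\cdots\xi^{a_p}\partial_{\xi^{a_1}}\cdots\partial_{\xi^{a_p}}$, which by Lemma \ref{L2.1} applied to $\Xi^{m-p}\chi$ (of $\xi$-degree $p-m-2$) evaluates to the scalar $(-1)^p\frac{(m+1)!}{(m-p+1)!}$. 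Combining this with the $(-1)^p$ from the $J$-expansion (so that the two signs cancel), summing over the $\binom{m}{p}$ placements of swaps, and using the identity $\binom{m}{p}(m+1)!/(m-p+1)!=m!\cdot\binom{m+1}{p}/(m-p)!$, the contracted John equation collapses to $m!\,\Phi_m[\chi]=0$, which proves the lemma. The principal obstacle I anticipate is precisely this last piece of bookkeeping: correctly separating the $x$- and $\xi$-slot contractions, recognizing the falling factorial hidden in the diagonal $\xi\cdot\partial_\xi$ pairings rather than a power of $\l\xi,\partial_\xi\r$, and verifying that the two sign factors and binomial coefficients conspire to reproduce $\Phi_m[\chi]$ exactly up to the constant $m!$.
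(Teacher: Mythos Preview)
Your argument is correct and takes a genuinely different route from the paper's. The paper proceeds by induction on $m$: given $\psi^{m+1}$ satisfying the $(m+2)$-fold John equations, it contracts a \emph{single} John factor with $\xi$ (multiplying $J_{ij}\widetilde J\psi^{m+1}=0$ by $\xi^j$), uses Euler's equation on the degree $-(m+2)$ function $\widetilde J\psi^{m+1}$ to produce a first-order operator $\frac{1}{m+1}\partial_{\xi^{i_{m+1}}}\langle\xi,\partial_x\rangle+\partial_{x^{i_{m+1}}}$, and then verifies algebraically (their formula (2.41)) that composing this operator with the level-$m$ expression from \eqref{Eq2.22} reproduces the level-$(m+1)$ expression. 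Your proof is direct: you contract all $m$ free indices at once with an auxiliary vector $\zeta$, reformulate the target as the scalar operator $\Phi_m$ in the Heisenberg algebra generated by $X,Y,\Xi$, and then match $\Phi_m$ against the full contraction $\xi^{\otimes m}\zeta^{\otimes m}$ of the $m$-fold John equation, using Lemma~\ref{L2.1} on the diagonal $\xi$-contractions to extract the falling factorial $(-1)^p(m+1)!/(m+1-p)!$.

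What each approach buys: the paper's inductive proof is elementary (only $q=1$ of the commutator identity is needed at each step) but requires the somewhat delicate algebraic identity (2.41) connecting consecutive levels. Your approach is structurally cleaner---the polarization and Heisenberg normal-ordering make the combinatorics transparent, and the hockey-stick identity yielding $\binom{m+1}{p}/(m-p)!$ together with the factorial matching $\binom{m}{p}(m+1)!/(m+1-p)!=m!\binom{m+1}{p}/(m-p)!$ closes the computation in one stroke. The bookkeeping you flag as the principal obstacle does work out exactly as you describe; the only point worth recording explicitly is that $\Xi^{m-p}$ must be commuted past the $\partial_{\xi^{k_a}}$ factors (which it does, trivially) before invoking Lemma~\ref{L2.1} on $\Xi^{m-p}\chi$, so that the $\xi$-multiplications sit directly against the bare $\partial_\xi$-derivatives.
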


\bpr
The John operator $J_{ij}$ commutes with partial derivatives (as any differential operator with constant coefficients). Therefore \eqref{Eq2.29} is equivalent to the equation
$$
\sigma(i_1\dots i_m)\Big(\sum\limits_{k=0}^m\frac{1}{(m-k)!}\,\frac{\partial^m}
{\partial x^{i_1}\dots\partial x^{i_k}\partial\xi^{i_{k+1}}\dots\partial\xi^{i_m}}J_{ij}\l\xi,\partial_x\r^{m-k}\Big)\psi^m=0.
$$
Moreover, the operators $\l\xi,\partial_x\r$ and $J_{ij}$ commute as one can easily check. Therefore our equation takes the form
\begin{equation}
\sigma(i_1\dots i_m)\Big(\sum\limits_{k=0}^m\frac{1}{(m-k)!}\,\frac{\partial^m}
{\partial x^{i_1}\dots\partial x^{i_k}\partial\xi^{i_{k+1}}\dots\partial\xi^{i_m}}\l\xi,\partial_x\r^{m-k}\Big)J_{ij}\psi^m=0.
                                         \label{Eq2.30}
\end{equation}
Thus we have to prove that \eqref{Eq2.28} and \eqref{Eq1.15} imply \eqref{Eq2.30}. This is trivially true for $m=0$.
We proceed by induction in $m$.
Assume Lemma \ref{S2.7} to be valid for some $m\ge0$. Now, let a function $\psi^{m+1}\in C^\infty\big({\R}^n\times({\R}^n\setminus\{0\})\big)$ satisfy
\begin{equation}
\psi^{m+1}(x,t\xi)=\frac{1}{|t|}\psi^{m+1}(x,\xi)\quad(0\neq t\in{\R})
                                         \label{Eq2.31}
\end{equation}
and
\begin{equation}
J_{ij}J_{i_1j_1}\dots J_{i_{m+1}j_{m+1}}\psi^{m+1}=0\quad\mbox{for all}\quad 1\le i,j,i_1,j_1,\dots,i_{m+1},j_{m+1}\le n.
                                         \label{Eq2.32}
\end{equation}
We have to prove that $\psi^{m+1}$ satisfies equation \eqref{Eq2.30} with $m$ increased by 1, i.e.,
\begin{equation}
\sigma(i_1\dots i_{m+1})\Big(\sum\limits_{k=0}^m\frac{1}{(m-k+1)!}\,\frac{\partial^{m+1}}
{\partial x^{i_1}\dots\partial x^{i_k}\partial\xi^{i_{k+1}}\dots\partial\xi^{i_{m+1}}}\l\xi,\partial_x\r^{m-k+1}\Big)J_{ij}\psi^{m+1}=0
                                         \label{Eq2.33}
\end{equation}
for all $1\le i,j,i_1,\dots i_{m+1}\le n$.

Let us temporary fix values of indices $(i_1,j_1,\dots,i_{m+1},j_{m+1})$ and set $ \wt{J}= 	J_{i_1j_1}\dots J_{i_{m+1} j_{m+1}}$. Equation \eqref{Eq2.32} is now written as $J_{ij}\widetilde{J}\psi^{m+1}=0$ or
$$
\frac{\PD^2 (\widetilde J\psi^{m+1}) }{\PD x^i\PD \xi^j}- \frac{\PD^2(\widetilde J\psi^{m+1})}{\PD x^j\PD \xi^i}=0.
$$
Multiplying this equation by $ \xi^j $ and taking sum over $j$, we have
$$
\xi^j \frac{\PD^2 (\widetilde{J}\psi^{m+1}) }{\PD x^i \PD \xi^j}-  \xi^j \frac{\PD^2(\widetilde{J}\psi^{m+1})}{\PD x^j\PD \xi^i}=0.
$$
This can be written in the form
\Beq
\frac{\PD}{\PD x^i} \l \xi,\PD_{\xi}\r \wt{J}\psi^{m+1} - \frac{\PD}{\PD\xi^i} \l \xi,\PD_{x}\r \wt{J}\psi^{m+1}+ \frac{\PD}{\PD x^i} \wt{J}\psi^{m+1}=0.
                                            \label{Eq2.34}
\Eeq
Note that $ \wt{J}\psi^{m+1}(x,\xi) $ is positively homogeneous of degree $ -(m+2) $ in its second argument. By the Euler equation for homogeneous functions,
$$
\l \xi,\PD_{\xi}\r \wt{J}\psi^{m+1} = -(m+2)  \wt{J}\psi^{m+1}.
$$
Substituting this expression into \eqref{Eq2.34}, we get
\[
(m+1)\frac{\PD}{\PD x^i}\wt{J} \psi^{m+1}+\frac{\PD}{\PD \xi^i}\l \xi,\PD_{x}\r \wt{J}\psi^{m+1}=0.
\]
Since $ \l \xi,\PD_{x}\r $ commutes with $ \wt{J} $, the last equation can be written as
$$
	 \wt{J} \left(\frac{1}{m+1}\,\frac{\PD}{\PD\xi^i} \l \xi,\PD_{x}\r + \frac{\PD}{\PD x^i} \right) \psi^{m+1}=0.
$$
Substituting the value $ \wt{J}= 	J_{i_1j_1}\dots J_{i_{m+1} j_{m+1}}$, we write this in the final form
\begin{equation}
J_{i_1j_1}\dots J_{i_{m+1} j_{m+1}}\left(\frac{1}{m+1}\,\frac{\PD}{\PD\xi^i} \l \xi,\PD_{x}\r + \frac{\PD}{\PD x^i} \right) \psi^{m+1}=0.
                                         \label{Eq2.35}
\end{equation}
From now on, the indices $(i_1,j_1,\dots,i_{m+1},j_{m+1})$ take again arbitrary values. Thus \eqref{Eq2.35} holds for all
$1\le i,i_1,j_1,\dots,i_{m+1},j_{m+1}\le n$.

For every index $i_{m+1}$ satisfying $1\le i_{m+1}\le n$, we define the function $\psi^m_{i_{m+1}}$ by
\begin{equation}
\psi^m_{i_{m+1}}(x,\xi)=\left(\frac{1}{m+1}\,\frac{\PD}{\PD\xi^{i_{m+1}}} \l \xi,\PD_{x}\r + \frac{\PD}{\PD x^{i_{m+1}}} \right) \psi^{m+1}(x,\xi).
                                        \label{Eq2.36}
\end{equation}
As easily follows from \eqref{Eq2.31} and \eqref{Eq2.36}, this function satisfies
\begin{equation}
\psi^m_{i_{m+1}}(x,t\xi)=\frac{1}{|t|}\psi^m_{i_{m+1}}(x,\xi)\quad(0\neq t\in{\R}).
                                         \label{Eq2.37}
\end{equation}
Equations  \eqref{Eq2.35} and \eqref{Eq2.37} mean that, for every ${i_{m+1}}$, the function $\psi^m_{i_{m+1}}$ satisfies hypotheses of Lemma \ref{S2.7}. By the induction hypothesis \eqref{Eq2.30}, for all $1\le i,j,{i_{m+1}},i_1,\dots, i_m\le n$,
\begin{align}
\sigma(i_1\dots i_m)\Big(\sum\limits_{k=0}^m\frac{1}{(m-k)!}\,\frac{\partial^m}
	{\partial x^{i_1}\dots\partial x^{i_k}\partial\xi^{i_{k+1}}\dots\partial\xi^{i_m}}\l\xi,\partial_x\r^{m-k}\Big)J_{ij}\psi^m_{i_{m+1}}=0.
                       \label{Eq2.38}
\end{align}

Let us denote
\Beq
\chi^{m}_{i_1\dots i_{m+1}}=\sigma(i_1\dots i_m)\Big(\sum\limits_{k=0}^m\frac{1}{(m-k)!}\,\frac{\partial^m}
{\partial x^{i_1}\dots\partial x^{i_k}\partial\xi^{i_{k+1}}\dots\partial\xi^{i_m}}\l\xi,\partial_x\r^{m-k}\Big)\psi^m_{i_{m+1}}.
                                       \label{Eq2.39}
\Eeq
Equation \eqref{Eq2.38} is written in terms of functions $\chi^{m}_{i_1\dots i_{m+1}}$ as follows:
\begin{align}
J_{ij}\chi^{m}_{i_1\dots i_{m+1}}=0.
                       \label{Eq2.40}
\end{align}
We will show the following equality, which would complete the induction step and hence the proof of Lemma \ref{S2.7}:
\Beq
\chi^{m}_{i_1\!\dots i_{m+1}}=\sigma(i_1\!\dots i_{m+1})\Big( \sum\limits_{k=0}^{m+1}\frac{1}{(m\!-\!k\!+\!1)!}\,\frac{\partial^{m+1}}
{\partial x^{i_1}\!\dots\partial x^{i_k}\partial\xi^{i_{k\!+\!1}}\!\dots\partial\xi^{i_m}\PD\xi^{i_{m\!+\!1}}}
\l\xi,\partial_x\r^{m\!-\!k\!+\!1}\Big)\psi^{m\!+\!1}.
                                       \label{Eq2.41}
\Eeq
Indeed, substitution of \eqref{Eq2.41} into \eqref{Eq2.40} gives \eqref{Eq2.33}.
By the way, formula \eqref{Eq2.41} shows that $\chi^{m}_{i_1\dots i_{m+1}}$ is symmetric in the indices $(i_1,\dots, i_{m+1})$. The latter fact is not obvious from definition \eqref{Eq2.39}.

It remains to prove \eqref{Eq2.41}. To this end we substitute expression \eqref{Eq2.36} for $\psi^m_{i_{m+1}}$ into \eqref{Eq2.39}
\Beq
\begin{aligned}
\chi^{m}_{i_1\!\dots i_{m+1}}=\sigma(i_1\dots i_m)&\Bigg(\sum\limits_{k=0}^m\frac{1}{(m-k)!}\,\frac{\partial^m}
{\partial x^{i_1}\dots\partial x^{i_k}\partial\xi^{i_{k+1}}\dots\partial\xi^{i_m}}\l\xi,\partial_x\r^{m-k}\Bigg)\times \\
&\times\left( \frac{1}{m+1}\frac{\PD}{\PD\xi^{i_{m+1}}} \l \xi,\PD_{x}\r + \frac{\PD}{\PD x^{i_{m+1}}} \right)\psi^{m+1}.
\end{aligned}
                                       \label{Eq2.42}
\Eeq

The commutator formula
	\[
	\l \xi, \PD_x\r^{m-k} \frac{\PD}{\PD \xi^{i_{m+1}}}=\frac{\PD}{\PD \xi^{i_{m+1}}}\l \xi, \PD_x\r^{m-k} - (m-k) \l \xi, \PD_x\r^{m-k-1}\frac{\PD}{\PD x^{i_{m+1}}}
	\]
is a partial case of Lemma \ref{L2.2}. With the help of the latter formula, \eqref{Eq2.42} becomes
	\begin{align*}
\chi^{m}_{i_1\!\dots i_{m+1}}= \sigma(i_1\dots i_m)\Bigg[&\sum\limits_{k=0}^m\frac{1}{(m-k)!}\,\frac{\partial^{m}}
	{\partial x^{i_1}\dots\partial x^{i_k}\partial\xi^{i_{k+1}}\dots\partial\xi^{i_m}}  \Bigg(\frac{1}{m+1}\frac{\PD}{\PD\xi^{i_{m+1}}} \l\xi,\partial_x\r^{m-k+1} \\
	&+ \l \xi,\PD_{x}\r^{m-k}\lb 1- \frac{m-k}{m+1}\rb \frac{\PD}{\PD x^{i_{m+1}}}\Bigg) \Bigg]\psi^{m+1}.
\end{align*}
This is easily transformed to the form
\begin{align*}
\chi^{m}_{i_1\!\dots i_{m+1}}=	\frac{\sigma(i_1\dots i_{m})}{m+1}&\Bigg[ \sum\limits_{k=0}^m\frac{m-k+1}{(m-k+1)!}\,\frac{\partial^{m+1}}
	{\partial x^{i_1}\dots\partial x^{i_k}\partial\xi^{i_{k+1}}\dots\PD\xi^{i_{m+1}}}\l\xi,\partial_x\r^{m-k+1}\\&+ \sum\limits_{k=0}^m\frac{k+1}{(m-k)!}\,\frac{\partial^{m+1}}
	{\partial x^{i_1}\dots\partial x^{i_k}\PD x^{i_{m+1}}\partial\xi^{i_{k+1}}\dots\partial\xi^{i_m}}\l\xi,\partial_x\r^{m-k} \Bigg]\psi^{m+1}.
\end{align*}
Replacing the summation variable $ k $ with $ k-1 $ in the second sum, we get
$$
\begin{aligned}
	\chi^{m}_{i_1\dots i_{m+1}}=&\frac{\sigma(i_1\dots i_{m})}{m+1}\Bigg[ \sum\limits_{k=0}^m\frac{m-k+1}{(m-k+1)!}\,\frac{\partial^{m+1}}
	{\partial x^{i_1}\dots\partial x^{i_k}\partial\xi^{i_{k+1}}\dots\PD\xi^{i_{m+1}}}\l\xi,\partial_x\r^{m-k+1}\\
	 &+ \sum\limits_{k=1}^{m+1}\frac{k}{(m-k+1)!}\,\frac{\partial^{m+1}}
	{\partial x^{i_1}\dots\partial x^{i_{k-1}}\PD x^{i_{m+1}}\partial\xi^{i_{k}}\dots\partial\xi^{i_m}}\l\xi,\partial_x\r^{m-k+1} \Bigg]\psi^{m+1}.
\end{aligned}
$$
Both summations can be extended to the limits $0\le k\le m+1$ because of the presence of the factors $m-k+1$ and $k$ in the first and second sum respectively. In this way, our formula becomes
\begin{equation}
\begin{aligned}
	\chi^{m}_{i_1\dots i_{m+1}}=\frac{1}{m+1}\sum\limits_{k=0}^{m+1}\Bigg[& \frac{\sigma(i_1\dots i_{m})}{(m-k+1)!}\left((m-k+1)\,\frac{\partial^{m+1}}
	{\partial x^{i_1}\dots\partial x^{i_k}\partial\xi^{i_{k+1}}\dots\PD\xi^{i_{m+1}}}\right.\\
	 &+\left. k\,\frac{\partial^{m+1}}
	{\partial x^{i_1}\dots\partial x^{i_{k-1}}\PD x^{i_{m+1}}\partial\xi^{i_{k}}\dots\partial\xi^{i_m}}\right) \Bigg]\l\xi,\partial_x\r^{m-k+1}\psi^{m+1}.
\end{aligned}
                                            \label{Eq2.43}
\end{equation}

Let us recall an easy statement of tensor algebra. If a tensor $(y_{i_1\dots i_{m+1}})$ is symmetric in first $k$ indices and in last $m-k+1$ indices, then
$$
\sigma(i_1\dots i_{m+1})y_{i_1\dots i_{m+1}}=\frac{1}{m+1}\,\sigma(i_1\dots i_{m})[(m-k+1)y_{i_1\dots i_{m+1}}+ky_{i_{m+1}i_1\dots i_{m}}].
$$
The proof is given in \cite[Lemma 2.4.1]{Sh} although the reader can easily prove this by themselves. We apply the statement to the operator-valued tensor
$$
y_{i_1\dots i_{m+1}}=\frac{\partial^{m+1}}{\partial x^{i_1}\dots\partial x^{i_k}\partial\xi^{i_{k+1}}\dots\PD\xi^{i_{m+1}}}
$$
to obtain
$$
\begin{aligned}
\sigma(i_1\dots i_{m})\Big((m\!-\!k\!+\!1)\,&\frac{\partial^{m+1}}{\partial x^{i_1}\!\dots\partial x^{i_k}\partial\xi^{i_{k+1}}\!\dots\PD\xi^{i_{m+1}}}
+k\,\frac{\partial^{m+1}}{\partial x^{i_1}\!\dots\partial x^{i_{k-1}}\PD x^{i_{m+1}}\partial\xi^{i_{k}}\!\dots\partial\xi^{i_m}}\Big)=\\
&=(m+1)\sigma(i_1\dots i_{m+1})\frac{\partial^{m+1}}{\partial x^{i_1}\!\dots\partial x^{i_k}\partial\xi^{i_{k+1}}\!\dots\PD\xi^{i_{m+1}}}.
\end{aligned}
$$
Substituting this value into \eqref{Eq2.43}, we arrive at \eqref{Eq2.41}.
\end{proof}

\subsection{Sufficiency}
We start the proof of the sufficiency part in Theorem \ref{Th1.3}. The proof consists of several
steps that are called \emph{statements}. Hypotheses of statements coincide with that of Theorem \ref{Th1.3}.

Given functions
$
\varphi^k\in{\mathcal S}(T{\mathbb S}^{n-1})\quad(k=0,\dots,m)
$
satisfying \eqref{Eq1.14a},
let the function $\psi^m\in C^\infty\big({\R}^n\times({\R}^n\setminus\{0\})\big)$ be defined by \eqref{Eq1.14} with $k=m$. Assume the function to satisfy
\eqref{Eq1.15}. We also define the functions
$\psi^k\in C^\infty\big({\R}^n\times({\R}^n\setminus\{0\})\big)\ (k=0,\dots,m-1)$ by \eqref{Eq1.14}.
Thus, \eqref{Eq1.14} is valid for all $k=0,1,\dots,m$. It implies
\Beq           \label{Eq2.21}
\psi^{k}|_{T\Sb^{n-1}}=\vp^{k}\quad \mbox{for}\quad 0\leq k\leq m.
\Eeq
In Section 1, properties \eqref{Eq1.12}--\eqref{Eq1.14} were easily derived from \eqref{Eq1.11}. Now we have no tensor field $f$ (we are, in fact, proving the existence of such a tensor field) and we cannot use Formula \eqref{Eq1.11}. Nevertheless, the functions $\psi^k$ possess the same properties as the following statement shoes.

\begin{statement} \label{S2.6}
For every $k=0,\dots,m$,
\begin{equation}
\psi^k(x,t\xi)=\frac{t^{m-k}}{|t|}\,\psi^k(x,\xi)\quad\mbox{for}\quad 0\neq t\in\R
                                         \label{Eq2.23}
\end{equation}
and
\begin{equation}
\psi^k(x+t\xi,\xi)=\sum\limits_{\ell=0}^k{k\choose \ell}(-t)^{k-\ell}\,\psi^\ell(x,\xi)\quad\mbox{for}\quad  t\in\R.
                                         \label{Eq2.24}
\end{equation}
\end{statement}

\begin{proof}
For $t\ne0$ by \eqref{Eq1.14},
\begin{equation}
\psi^k(x,t\xi)=|t|^{m-2k-1}|\xi|^{m-2k-1}\!\sum\limits_{\ell=0}^k(-1)^{k-\ell}{k\choose \ell}|t|^\ell|\xi|^\ell t^{k-\ell}\l\xi,x\r^{k-\ell}\,\varphi^\ell
\Big(x-\frac{\l\xi,x\r}{|\xi|^2}\xi,\mbox{sgn}(t)\,\frac{\xi}{|\xi|}\Big).
                                         \label{Eq2.25}
\end{equation}
In the case of $t>0$, this reads
$$
\psi^k(x,t\xi)=t^{m-k-1}|\xi|^{m-2k-1}\sum\limits_{\ell=0}^k(-1)^{k-\ell}{k\choose \ell}|\xi|^\ell \l\xi,x\r^{k-\ell}\,\varphi^\ell
\Big(x-\frac{\l\xi,x\r}{|\xi|^2}\xi,\frac{\xi}{|\xi|}\Big).
$$
Together with \eqref{Eq1.14}, this gives
$$
\psi^k(x,t\xi)=t^{m-k-1}\psi^k(x,\xi).
$$
This coincides with \eqref{Eq2.23} for $t>0$.

In the case of $t<0$, \eqref{Eq2.25} reads
$$
\psi^k(x,t\xi)=(-t)^{m-k-1}|\xi|^{m-2k-1}\sum\limits_{\ell=0}^k{k\choose \ell}|\xi|^\ell \l\xi,x\r^{k-\ell}\,\varphi^\ell
\Big(x-\frac{\l\xi,x\r}{|\xi|^2}\xi,-\frac{\xi}{|\xi|}\Big).
$$
On using \eqref{Eq1.14a}, this takes the form
$$
\psi^k(x,t\xi)=-t^{m-k-1}|\xi|^{m-2k-1}\sum\limits_{\ell=0}^k(-1)^{k-\ell}{k\choose \ell}|\xi|^\ell \l\xi,x\r^{k-\ell}\,\varphi^\ell
\Big(x-\frac{\l\xi,x\r}{|\xi|^2}\xi,\frac{\xi}{|\xi|}\Big).
$$
Together with \eqref{Eq1.14}, this gives
$$
\psi^k(x,t\xi)=-t^{m-k-1}\psi^k(x,\xi).
$$
This coincides with \eqref{Eq2.23} for $t<0$.

By \eqref{Eq1.14},
$$
\psi^k(x+t\xi,\xi)=|\xi|^{m-2k-1}\sum\limits_{\ell=0}^k(-1)^{k-\ell}{k\choose \ell}|\xi|^\ell (\l\xi,x\r+t|\xi|^2)^{k-\ell}\,\varphi^\ell
\Big(x-\frac{\l\xi,x\r}{|\xi|^2}\xi,\frac{\xi}{|\xi|}\Big).
$$
Expanding $(\l\xi,x\r+t|\xi|^2)^{k-\ell}$ in powers of $t$, we write this in the form
$$
\psi^k(x+t\xi,\xi)=|\xi|^{m-2k-1}\sum\limits_{\ell=0}^k(-1)^{k-\ell}{k\choose \ell}\sum\limits_{s=0}^{k-\ell}{{k-\ell}\choose s} |\xi|^{\ell+2s}\l\xi,x\r^{k-\ell-s} \,t^s\,\varphi^\ell\Big(x-\frac{\l\xi,x\r}{|\xi|^2}\xi,\frac{\xi}{|\xi|}\Big).
$$
After changing the order of summations, this becomes
\begin{equation}
\psi^k(x+t\xi,\xi)=\sum\limits_{s=0}^k t^s\sum\limits_{\ell=0}^{k-s}(-1)^{k-\ell}{k\choose \ell}{{k\!-\!\ell}\choose s}
|\xi|^{m-2k+\ell+2s+1}\l\xi,x\r^{k-\ell-s} \,\varphi^\ell\Big(x-\frac{\l\xi,x\r}{|\xi|^2}\xi,\frac{\xi}{|\xi|}\Big).
                                         \label{Eq2.26}
\end{equation}

Let us transform the right hand side of \eqref{Eq2.26}. Obviously,
$$
\sum\limits_{\ell=0}^k{k\choose \ell}(-t)^{k-\ell}\,\psi^\ell(x,\xi)=
\sum\limits_{s=0}^{{k}}(-1)^s{k\choose s}t^s\,\psi^{k-s}(x,\xi)
$$
(we just changed the summation variable as $\ell=k-s$). Substituting value \eqref{Eq1.14} for $\psi^{k-s}(x,\xi)$ into the right-hand side of the last formula, we obtain
$$
\begin{aligned}
&\sum\limits_{\ell=0}^k{k\choose \ell}(-t)^{k-\ell}\,\psi^\ell(x,\xi)=\\
&=\sum\limits_{s=0}^k(-1)^s{k\choose s}t^s|\xi|^{m-2(k-s)-1}\sum\limits_{\ell=0}^{k-s}(-1)^{k-s-\ell}{{k-s}\choose \ell}|\xi|^\ell\,
\l\xi,x\r^{k-s-\ell}\varphi^\ell\Big(x-\frac{\l\xi,x\r}{|\xi|^2}\xi,\frac{\xi}{|\xi|}\Big).
\end{aligned}
$$
This can be written as
\begin{equation}
\begin{aligned}
&\sum\limits_{\ell=0}^k{k\choose \ell}(-t)^{k-\ell}\,\psi^\ell(x,\xi)=\\
&=\sum\limits_{s=0}^k t^s\sum\limits_{\ell=0}^{k-s}(-1)^{k\!-\!\ell}{k\choose s}{{k-s}\choose \ell}|\xi|^{m-2k+2s+\ell-1}\,
\l\xi,x\r^{k-s-\ell}\varphi^\ell\Big(x-\frac{\l\xi,x\r}{|\xi|^2}\xi,\frac{\xi}{|\xi|}\Big).
\end{aligned}
                                         \label{Eq2.27}
\end{equation}
In virtue of the obvious equality
$$
{k\choose \ell}{{k-\ell}\choose s}={k\choose s}{{k-s}\choose \ell},
$$
right hand sides of formulas \eqref{Eq2.26} and \eqref{Eq2.27} coincide. Equating left hand sides of these formulas, we obtain \eqref{Eq2.24}.
\end{proof}

Equations \eqref{Eq1.15} and \eqref{Eq2.23} mean that the function $\psi^m$ satisfies hypotheses of Lemma \ref{S2.7}. Applying the lemma, we can state that the functions $\psi_{i_1\dots i_m}\in C^\infty\big({\R}^n\times({\R}^n\setminus\{0\})\big)$, defined by \eqref{Eq2.22} for all indices $1\le i_1,\dots,i_m\le n$,  satisfy \eqref{Eq2.29}. To study properties of functions $\psi_{i_{1}\dots i_{m}}$, it is convenient to look at an alternate formulation of the formula \eqref{Eq2.22} which we now show.

\begin{statement}\label{S2.8}
For all indices $(i_1,\dots,i_m)$,
\begin{equation}
\psi_{i_1\dots i_m}=\frac{1}{m!}\sigma(i_1\dots i_m)\sum\limits_{k=0}^m(-1)^k{m\choose k}\frac{\partial^m \psi^k}
		{\partial x^{i_1}\dots\partial x^{i_k}\partial\xi^{i_{k+1}}\dots\partial\xi^{i_m}}.
                                         \label{Eq2.44}
\end{equation}
\end{statement}

Compare \eqref{Eq2.44} with the inversion formula \cite[Formula (3.1)]{KMSS}.

\begin{proof}
We first show that for every $k=0,1,\dots,m$ and for every integer $\ell\ge0$,
\begin{equation}
\l\xi,\partial_x\r^\ell\psi^k=\left\{
\begin{aligned}
&(-1)^\ell{k\choose \ell}\ell!\,\psi^{k-\ell}\quad\mbox{if}\quad\ell\le k,\\
&0\quad\mbox{if}\quad\ell>k.
\end{aligned}\right.
                                         \label{Eq2.45}
\end{equation}
To see \eqref{Eq2.45}, we first change the summation variable in equation \eqref{Eq2.24} as $\ell=k-p$. We get,
$$
\psi^k(x+t\xi,\xi)=\sum\limits_{p=0}^k(-1)^p{k\choose p}t^p\,\psi^{k-p}(x,\xi).
$$
Apply the operator $\l\xi,\partial_x\r^\ell$ to this equation
$$
(\l\xi,\partial_x\r^\ell\psi^k)(x+t\xi,\xi)=\frac{\partial^\ell\big(\psi^k(x+t\xi,\xi)\big)}{\partial t^\ell}
=\left\{
\begin{aligned}
&\sum\limits_{p=\ell}^k(-1)^\ell{k\choose p}\frac{p!}{(p\!-\!\ell)!}t^{p-\ell}\,\psi^{k-p}(x,\xi)\ \mbox{if}\ \ell\le k,\\
&0\quad\mbox{if}\quad\ell>k.
\end{aligned}\right.
$$
Setting $t=0$, we obtain \eqref{Eq2.45}.

Now from \eqref{Eq2.45}, we have
\[
\langle \xi,\PD_x\rangle^{m-k} \psi^{m}=(-1)^{m-k}{m\choose k}(m-k)!\psi^{k}.
\]
Substituting this into \eqref{Eq2.22}, we obtain \eqref{Eq2.44}. Also from \eqref{Eq2.45},
$$
\psi^k=(-1)^{m-k}{m\choose k}^{-1}\frac{1}{(m-k)!}\l\xi,\partial_x\r^{m-k}\psi^m\quad(0\le k\le m).
$$
Substituting this value into \eqref{Eq2.44}, we obtain \eqref{Eq2.22}. Thus formulas \eqref{Eq2.22} and \eqref{Eq2.44} are equivalent.
\end{proof}

\begin{statement} \label{S2.10}
	For every $k=0,\dots,m$,
	\begin{equation}
		\left.\frac{\partial^{r+s}\psi^k}{\partial x^{i_1}\dots\partial x^{i_r}\partial \xi^{j_i}\dots\partial \xi^{j_s}}\right|_{T{\mathbb S}^{n-1}}
		\in{\mathcal S}(T{\mathbb S}^{n-1})\quad\mbox{for all}\quad 1\le i_1,\dots,i_r,j_1,\dots,j_s\le n.
		                            \label{Eq2.46}
	\end{equation}
\end{statement}

\begin{proof}
	By Statement \ref{S2.6}, every function $\psi^k(x,\xi)$ is positively homogeneous in $\xi$. By \eqref{Eq2.21},
	$\psi^k|_{T{\mathbb S}^{n-1}}=\varphi^k\in{\mathcal S}(T{\mathbb S}^{n-1})$.
	
	By \eqref{Eq2.45}, $\l\xi,\partial_x\r\psi^0=0$. Thus the function $\psi^0$ satisfies hypotheses of Lemma \ref{L2.5}. Applying this Lemma, we obtain \eqref{Eq2.46} for $k=0$.
	
	We proceed by induction in $k$. Assume \eqref{Eq2.46} to be valid for some $k$. By \eqref{Eq2.45} again,
	$\l\xi,\partial_x\r\psi^{k+1}=-(k+1)\psi^k$. Together with the induction hypothesis, this implies that $\psi^{k+1}$ satisfies hypotheses of Lemma \ref{L2.5}. Applying this Lemma, we obtain \eqref{Eq2.46} for $k:=k+1$.
\end{proof}

\begin{statement}\label{S2.11}
For every $m$-tuple $(i_1,\dots,i_m)$, the function $\psi_{i_1\dots i_m}$ satisfies
\begin{equation}
\psi_{i_1\dots i_m}(x,t\xi)=\frac{1}{|t|}\psi_{i_1\dots i_m}(x,\xi)\ \mbox{for}\ 0\neq t\in\R
                                         \label{Eq2.47}
\end{equation}
and
\begin{equation}
\psi_{i_1\dots i_m}(x+t\xi,\xi)=\psi_{i_1\dots i_m}(x,\xi)\ \mbox{for}\ t\in\R.
                                         \label{Eq2.47a}
\end{equation}
\end{statement}

\bpr
We say $\chi\in C^\infty\big({\R}^n\times({\R}^n\setminus\{0\})\big)$ is a {\it superhomogeneous} function of degree $\lambda$ if
$\chi(t\xi)=\frac{t^{\lambda+1}}{|t|}\chi(\xi)$ for $0\neq t\in\R$. By \eqref{Eq2.23}, $\psi^k(x,\xi)$ is a superhomogeneous function of degree $m-k-1$ in $\xi$. Every derivative $\frac{\partial}{\partial\xi^i}$ decreases the degree of superhomogeneity by one. Therefore all summands on the right-hand side of \eqref{Eq2.44} are superhomogeneous functions of degree $-1$ in $\xi$. This proves \eqref{Eq2.47}.

The property \eqref{Eq2.47a} is equivalent to $\frac{\partial}{\partial t}\big(\psi_{i_1\dots i_m}(x+t\xi,\xi)\big)=0$. Since
$$
\frac{\partial}{\partial t}\big(\psi_{i_1\dots i_m}(x+t\xi,\xi)\big)=(\l\xi,\partial_x\r\psi_{i_1\dots i_m})(x+t\xi,\xi),
$$
\eqref{Eq2.47a} is equivalent to
\begin{equation}
\l\xi,\partial_x\r\psi_{i_1\dots i_m}=0.
                                         \label{Eq2.48}
\end{equation}

Applying the operator $\l\xi,\partial_x\r$ to the equation \eqref{Eq2.22}, we obtain
$$
\begin{aligned}
\l\xi,\partial_x\r&\psi_{i_1\dots i_m}=\\
&=\frac{(-1)^m}{m!}\Big[\sigma(i_1\dots i_m)\sum\limits_{k=0}^m\frac{1}{(m-k)!}\,\l\xi,\partial_x\r\frac{\partial^m}
{\partial x^{i_1}\dots\partial x^{i_k}\partial\xi^{i_{k+1}}\dots\partial\xi^{i_m}}\l\xi,\partial_x\r^{m-k}\Big]\psi^m.
\end{aligned}
$$
By Corollary \ref{C2.1}, the operator in the brackets coincides with $\frac{1}{m!}\frac{\partial^m}{\partial\xi^{j_1}\dots\partial\xi^{j_m}}\l\xi,\partial_x\r^{m+1}$. Thus,
$$
\l\xi,\partial_x\r\psi_{i_1\dots i_m}=
\frac{(-1)^m}{(m!)^2}\frac{\partial^m}{\partial\xi^{j_1}\dots\partial\xi^{j_m}}\l\xi,\partial_x\r^{m+1}\psi^m.
$$
The right-hand side is equal to zero by \eqref{Eq2.45}. This proves \eqref{Eq2.47a}.
\epr

Together with Statement \ref{S2.10}, \eqref{Eq2.22}, or equivalently, \eqref{Eq2.44} implies that
\begin{equation}
\varphi_{i_1\dots i_m}(x,\xi)=\psi_{i_1\dots i_m}|_{T{\mathbb S}^{n-1}}\in{\mathcal S}(T{\mathbb S}^{n-1}).
                          \label{Eq2.49}
\end{equation}

As easily follows from \eqref{Eq2.47}--\eqref{Eq2.47a},
the functions $\psi_{i_1\dots i_m}$ can be recovered from $\varphi_{i_1\dots i_m}$ by
\begin{equation}
\psi_{i_1\dots i_m}(x,\xi)=\frac{1}{|\xi|}\varphi_{i_1\dots i_m}\Big(x-\frac{\l\xi,x\r}{|\xi|^2}\xi,\frac{\xi}{|\xi|}\Big).
\label{Eq2.50}
\end{equation}
Formulas \eqref{Eq2.29}, \eqref{Eq2.49} and \eqref{Eq2.50} mean that, for every $m$-tuple $(i_1,\dots,i_m)$ of integers satisfying $1\le i_1,\dots,i_m\le n$, the function $\varphi_{i_1\dots i_m}\in{\mathcal S}(T{\mathbb S}^{n-1})$ satisfies the hypotheses of Theorem \ref{Th1.1} with $m=0$. Applying this theorem, we obtain the function $f_{i_1\dots i_m}\in{\mathcal S}({\R}^n)$ such that
\begin{equation}
I^0f_{i_1\dots i_m}=\varphi_{i_1\dots i_m}.
                     \label{Eq2.51}
\end{equation}
We emphasize that the hypothesis $n\ge3$ of Theorem \ref{Th1.3} is essentially used on this step; Theorem \ref{Th1.1} is not true in the case of $n=2$.
Together with \eqref{Eq2.50}, \eqref{Eq2.51} implies
\begin{equation}
J^0f_{i_1\dots i_m}=\psi_{i_1\dots i_m}.
                     \label{Eq2.51a}
\end{equation}
Let $f\in{\mathcal S}({\R}^n;S^m{\R}^n)$ be the symmetric tensor field whose coordinates are $f_{i_1\dots i_m}$.

\begin{statement} \label{S2.12}
	For every $k=0,1,\dots,m$,
	\begin{equation}
	J^k\!f=\psi^k.
	              \label{Eq2.52}
	\end{equation}
\end{statement}

\begin{proof}
	We will prove that
	\begin{equation}
	\frac{\partial^k(J^k\!f-\psi^k)}{\partial x^{j_1}\dots\partial x^{j_k}}=0
	                                  \label{Eq2.53}
	\end{equation}
	for all integers $(j_1,\dots,j_k)$ satisfying $1\le j_1,\dots,j_k\le n$. This will imply \eqref{Eq2.52}. Indeed, for a fixed $0\neq\xi\in{\R}^n$, the restriction of the function $(J^k\!f-\psi^k)(\cdot,\xi)$ to the hyperplane $\xi^\bot=\{x\in{\R}^n\mid\l\xi,x\r=0\}$ belongs to the Schwartz space ${\mathcal S}(\xi^\bot)$. Equation \eqref{Eq2.53} means that all $k$th order derivatives of the latter function are identically equal to zero. This implies that the restriction itself must be identically equal to zero. Since $\xi$ is arbitrary, this proves \eqref{Eq2.52}.
	
	By Lemma \ref{L2.3} with $\ell=k$,
	$$
	\frac{\partial^k(J^k\!f_{i_1\dots i_m})}{\partial x^{j_1}\dots\partial x^{j_k}}=
	\frac{\partial^k(J^0\!f_{i_1\dots i_m})}{\partial \xi^{j_1}\dots\partial \xi^{j_k}}.
	$$
	Together with \eqref{Eq2.51a}, this gives
	\begin{equation}
	\frac{\partial^k(J^k\!f_{i_1\dots i_m})}{\partial x^{j_1}\dots\partial x^{j_k}}=
	\frac{\partial^k\psi_{i_1\dots i_m}}{\partial \xi^{j_1}\dots\partial \xi^{j_k}}.
	                               \label{Eq2.54}
	\end{equation}
	
	By the definition of the operator $J^k$,
	$$
	J^k\!f=\xi^{i_1}\dots\xi^{i_m}J^k\!f_{i_1\dots i_m}.
	$$
	Applying the operator $\frac{\partial^k}{\partial x^{j_1}\dots\partial x^{j_k}}$ to this equality, we obtain
	$$
	\frac{\partial^k(J^k\!f)}{\partial x^{j_1}\dots\partial x^{j_k}}=
	\xi^{i_1}\dots\xi^{i_m}\frac{\partial^k(J^k\!f_{i_1\dots i_m})}{\partial x^{j_1}\dots\partial x^{j_k}}.
	$$
	Together with \eqref{Eq2.54}, this gives
	\begin{equation}
	\frac{\partial^k(J^k\!f)}{\partial x^{j_1}\dots\partial x^{j_k}}=
	\xi^{i_1}\dots\xi^{i_m}\frac{\partial^k\psi_{i_1\dots i_m}}{\partial \xi^{j_1}\dots\partial \xi^{j_k}}.
	\label{Eq2.55}
	\end{equation}
	
	To compute the right-hand side of \eqref{Eq2.55}, we consider the expression for $\psi_{i_{1}\cdots i_{m}}$ from \eqref{Eq2.44}: 
	$$
	\psi_{i_1\dots i_m}=\frac{1}{m!}\sigma(i_1\dots i_m)\sum\limits_{\ell=0}^m(-1)^\ell{m\choose \ell}\frac{\partial^m \psi^\ell}
	{\partial x^{i_1}\dots\partial x^{i_\ell}\partial\xi^{i_{\ell+1}}\dots\partial\xi^{i_m}}
	$$
	and apply the operator $\frac{\partial^k}{\partial \xi^{j_1}\dots\partial \xi^{j_k}}$ to this equality
	$$
	\frac{\partial^k\psi_{i_1\dots i_m}}{\partial \xi^{j_1}\dots\partial \xi^{j_k}}=
	\frac{1}{m!}\sigma(i_1\dots i_m)\sum\limits_{\ell=0}^m(-1)^\ell{m\choose \ell}\frac{\partial^{m+k} \psi^\ell}
	{\partial x^{i_1}\dots\partial x^{i_\ell}\partial\xi^{i_{\ell+1}}\dots\partial\xi^{i_m}\partial \xi^{j_1}\dots\partial \xi^{j_k}}.
	$$
	We multiply this equality by $\xi^{i_1}\dots\xi^{i_m}$ and perform the summation over indices $(i_1,\dots,i_m)$. While doing this, we can omit the symmetrization $\sigma(i_1\dots i_m)$ since the factor $\xi^{i_1}\dots\xi^{i_m}$ is symmetric in these indices. In this way we obtain
	$$
	\xi^{i_1}\!\!\dots\xi^{i_m}\frac{\partial^k\psi_{i_1\dots i_m}}{\partial \xi^{j_1}\dots\partial \xi^{j_k}}=
	\frac{1}{m!}\sum\limits_{\ell=0}^m(-1)^\ell{m\choose \ell}\xi^{i_1}\!\!\dots\xi^{i_m}\frac{\partial^{m+k} \psi^\ell}
	{\partial x^{i_1}\dots\partial x^{i_\ell}\partial\xi^{i_{\ell+1}}\dots\partial\xi^{i_m}\partial \xi^{j_1}\dots\partial \xi^{j_k}}.
	$$
	This can be written in the form
	\begin{equation}
	\xi^{i_1}\!\!\dots\xi^{i_m}\frac{\partial^k\psi_{i_1\dots i_m}}{\partial \xi^{j_1}\dots\partial \xi^{j_k}}=
	\frac{1}{m!}\sum\limits_{\ell=0}^m(-1)^\ell{m\choose \ell}\l\xi,\partial_x\r^\ell\Big(\xi^{i_{\ell+1}}\dots\xi^{i_m}\frac{\partial^{m+k-\ell} \psi^\ell}
	{\partial\xi^{i_{\ell+1}}\!\!\dots\partial\xi^{i_m}\partial \xi^{j_1}\dots\partial \xi^{j_k}}\Big).
	\label{Eq2.56}
	\end{equation}
	
	By \eqref{Eq2.23}, the function $\frac{\partial^{k} \psi^\ell}{\partial \xi^{j_1}\dots\partial \xi^{j_k}}$ is positively homogeneous of degree $m-k-\ell-1$ in $\xi$.
	By Lemma \ref{L2.1},
	$$
	\begin{aligned}
	\xi^{i_{\ell+1}}\dots\xi^{i_m}&\frac{\partial^{m+k-\ell} \psi^\ell}
	{\partial\xi^{i_{\ell+1}}\dots\partial\xi^{i_m}\partial \xi^{j_1}\dots\partial \xi^{j_k}}=
	\xi^{i_{\ell+1}}\dots\xi^{i_m}\frac{\partial^{m-\ell}}{\partial\xi^{i_{\ell+1}}\dots\partial\xi^{i_m}}\Big(
	\frac{\partial^{k} \psi^\ell}{\partial \xi^{j_1}\dots\partial \xi^{j_k}}\Big)\\
	&=(m-k-\ell-1)(m-k-\ell-2)\dots(-k)\frac{\partial^{k} \psi^\ell}{\partial \xi^{j_1}\dots\partial \xi^{j_k}}.
	\end{aligned}
	$$
	If $m-k-\ell-1\ge0$, then the product $(m-k-\ell-1)(m-k-\ell-2)\dots(-k)$ contains the zero factor. Otherwise, if $\ell\ge m-k$,
	$$
	(m-k-\ell-1)(m-k-\ell-2)\dots(-k)=(-1)^{m-\ell}\frac{k!}{(k+\ell-m)!}.
	$$
	Thus,
	$$
	\xi^{i_{\ell+1}}\dots\xi^{i_m}\frac{\partial^{m+k-\ell} \psi^\ell}
	{\partial\xi^{i_{\ell+1}}\dots\partial\xi^{i_m}\partial \xi^{j_1}\dots\partial \xi^{j_k}}=
	\left\{
	\begin{aligned}
	&0\quad\mbox{if}\quad \ell\le m-k-1,\\
	&(-1)^{m-\ell}\frac{k!}{(k+\ell-m)!}\frac{\partial^{k} \psi^\ell}{\partial \xi^{j_1}\dots\partial \xi^{j_k}}
	\ \mbox{if}\ \ell\ge m-k.
	\end{aligned}\right.
	$$
	Substitute this value into \eqref{Eq2.56} to obtain
	$$
	\xi^{i_1}\!\!\dots\xi^{i_m}\frac{\partial^k\psi_{i_1\dots i_m}}{\partial \xi^{j_1}\dots\partial \xi^{j_k}}=
	\frac{(-1)^m}{m!}\sum\limits_{\ell=m-k}^m \frac{k!}{(k+\ell-m)!}{m\choose \ell}\l\xi,\partial_x\r^\ell\Big(
	\frac{\partial^{k} \psi^\ell}{\partial \xi^{j_1}\dots\partial \xi^{j_k}}\Big).
	$$
	Together with \eqref{Eq2.55}, this gives
	\begin{equation}
	\frac{\partial^k(J^k\!f)}{\partial x^{j_1}\dots\partial x^{j_k}}=
	\frac{(-1)^m}{m!}\sum\limits_{\ell=m-k}^m \frac{k!}{(k+\ell-m)!}{m\choose \ell}\l\xi,\partial_x\r^\ell\Big(
	\frac{\partial^{k} \psi^\ell}{\partial \xi^{j_1}\dots\partial \xi^{j_k}}\Big).
	                      \label{Eq2.57}
	\end{equation}
	
	By Lemma \ref{L2.2},
	\begin{equation}
	\l\xi,\partial_x\r^\ell\Big(\frac{\partial^{k} \psi^\ell}{\partial \xi^{j_1}\dots\partial \xi^{j_k}}\Big)
	=\sigma(j_1\dots j_k)\sum\limits_{p=0}^{\min(k,\ell)}(-1)^p{k\choose p}\frac{\ell!}{(\ell-p)!}\,
	\frac{\partial^{k}(\l\xi,\partial_x\r^{\ell-p} \psi^\ell)}{\partial x^{j_1}\dots\partial x^{j_p}\partial \xi^{j_{p+1}}\dots\partial \xi^{j_k}}.
	                       \label{Eq2.58}
	\end{equation}
	By \eqref{Eq2.45},
	$$
	\l\xi,\partial_x\r^{\ell-p} \psi^\ell=(-1)^{\ell-p}{\ell\choose p}(\ell-p)!\,\psi^p.
	$$
	Substituting this value into \eqref{Eq2.58}, we get,
	$$
	\l\xi,\partial_x\r^\ell\Big(\frac{\partial^{k} \psi^\ell}{\partial \xi^{j_1}\dots\partial \xi^{j_k}}\Big)
	=(-1)^\ell\sigma(j_1\dots j_k)\sum\limits_{p=0}^{\min(k,\ell)} \ell!{k\choose p}{\ell\choose p}\,
	\frac{\partial^{k}\psi^p}{\partial x^{j_1}\dots\partial x^{j_p}\partial \xi^{j_{p+1}}\dots\partial \xi^{j_k}}.
	$$
	Then substituting the latter expression into \eqref{Eq2.57}, we have
		$$
	\begin{aligned}
	\frac{\partial^k(J^k\!f)}{\partial x^{j_1}\dots\partial x^{j_k}}=
	\frac{(-1)^m}{m!}\sigma(j_1\dots j_k)\sum\limits_{\ell=m-k}^m& (-1)^\ell\frac{k!\ell!}{(k+\ell-m)!}{m\choose \ell}
	\sum\limits_{p=0}^{\min(k,\ell)}{k\choose p}{\ell\choose p} \times\\
	&\times\frac{\partial^{k}\psi^p}{\partial x^{j_1}\dots\partial x^{j_p}\partial \xi^{j_{p+1}}\dots\partial \xi^{j_k}}.
	\end{aligned}
	$$
	After changing the order of summations, this can be written as
	\begin{equation}
	\frac{\partial^k(J^k\!f)}{\partial x^{j_1}\dots\partial x^{j_k}}=
	\sigma(j_1\dots j_k)\sum\limits_{p=0}^k a(m,k,p)\,\frac{\partial^{k}\psi^p}{\partial x^{j_1}\dots\partial x^{j_p}\partial \xi^{j_{p+1}}\dots\partial \xi^{j_k}},
	                       \label{Eq2.59}
	\end{equation}
	where
		\begin{equation}
	a(m,k,p)=\frac{(-1)^m}{m!}{k\choose p}\sum\limits_{\ell=\max(m-k,p)}^m (-1)^\ell\frac{k!\ell!}{(k+\ell-m)!}{m\choose \ell}{\ell\choose p}.
	                     \label{Eq2.60}
	\end{equation}
	As we will show
	\begin{equation}
	a(m,k,p)=\left\{\begin{array}{l}0\quad\mbox{for}\quad p<k\le m,\\ 1\quad\mbox{for}\quad p=k\le m.\end{array}\right.
	                           \label{Eq2.61}
	\end{equation}
	Substituting this value into \eqref{Eq2.59}, we obtain
	$$
	\frac{\partial^k(J^k\!f)}{\partial x^{j_1}\dots\partial x^{j_k}}=
	\frac{\partial^{k}\psi^k}{\partial x^{j_1}\dots\partial x^{j_k}}.
	$$
	This coincides with \eqref{Eq2.53}.
	
	It remains to prove \eqref{Eq2.61}.
	As is seen from \eqref{Eq2.60}, $a(m,0,0)=1$; this agrees with \eqref{Eq2.61}. Therefore we assume $k>0$ in what follows.
	
	First of all we change the summation variable in \eqref{Eq2.60} as $\ell=r+m-k$
	$$
	a(m,k,p)=\frac{(-1)^k}{m!}{k\choose p}\sum\limits_{r=\max(0,k+p-m)}^k (-1)^r\,\frac{k!(r+m-k)!}{r!}{m\choose {k-r}}{{r+m-k}\choose p}.
	$$
	Simplifying this, we obtain
	\[
	a(m,k,p)=(-1)^{k}{k\choose p}c(m,k,p),
	\]
	where
	\[
	c(m,k,p)=\sum\limits_{r=0}^{k}(-1)^r {k\choose r}{r+m-k\choose p}.
	\]
	This expression for $c(k,m,p)$ is well-known; see \cite[Formula 47, Section 4.2.5]{PBM}. Namely,
	\[
	c(m,k,p)=
	\begin{cases}
	0\quad \mbox{if} \quad p<k\leq m,\\
	(-1)^k\quad \mbox{if} \quad p=k\leq m.
	\end{cases}
	\]
Substituting this value into the previous formula, we obtain \eqref{Eq2.61}.
\end{proof}

Statement \ref{S2.12} establishes that $I^{k}f=\vp^{k}$ for all $0\leq k\leq m$.  Indeed, comparing \eqref{Eq2.21} and \eqref{Eq2.52}, we see that
$$
(\varphi^0,\dots,\varphi^m)=(I^0\!f,\dots,I^m\!f).
$$
We have completed the proof of the sufficiency part of Theorem \ref{Th1.3}.

\section{Proof of Theorem \ref{Th1.4}}\label{S3}

A symmetric rank $m$ tensor field $f$ on the plane ${\R}^2=\{(x_1,x_2)\}$ is uniquely written as
$$
f=\sum\limits_{j=0}^m {m\choose j}f_{\underbrace{1\dots1}_{m-j}\underbrace{2\dots2}_j}(x_1,x_2)\,dx_1^{m-j}dx_2^j.
$$
Introducing the functions
$$
{\check f}_j(x_1,x_2)={m\choose j}f_{\underbrace{\small 1\dots1}_{m-j}\underbrace{2\dots2}_j}(x_1,x_2)\quad(j=0,1,\dots, m),
$$
we write this in the form
$$
f=\sum\limits_{j=0}^m{\check f}_j(x_1,x_2)\,dx_1^{m-j}dx_2^j.
$$
We refer to ${\check f}_j$ as {\it real coordinates} of the tensor field $f$ (although they are complex-valued functions in the general case).

We identify ${\R}^2$ with the complex plane $\C=\{z=x_1+ \textsl{i}\,x_2\}$ (hereafter $\textsl{i}$ is the imaginary unit). The covectors $dz=dx_1+\textsl{i}\,dx_2$ and $d\bar z=dx_1-\textsl{i}\,dx_2$ generate the algebra of symmetric tensor fields, i.e., every symmetric rank $m$ tensor field $f$ is uniquely represented in the form
$$
f=\sum\limits_{j=0}^m {\tilde f}_j(z)\,dz^{m-j}d\bar z^j.
$$
We refer to ${\tilde f}_j$ as {\it complex coordinates} of the tensor field $f$

Two bases $\{dx_1^{m-j}dx_2^j\mid0\le j\le m\}$ and $\{dz^{m-j}d\bar z^j\mid0\le j\le m\}$ of the space $S^m{\R}^2$ are related by
\begin{equation}
dz^{m-j}d\bar z^j=\sum\limits_{q=0}^m a^j_q\, dx_1^{m-q}dx_2^q,
                                         \label{Eq3.1}
\end{equation}
with some nondegenerate $(m+1)\times(m+1)$-matrix $A_m=(a^j_q)_{j,q=0}^m$. The components of a tensor field are transformed by
\begin{equation}
{\tilde f}_j(z)=\sum\limits_{q=0}^m b^q_j\,{\check f}_q(x_1,x_2)\quad(z=x_1+ \textsl{i}x_2),
                                         \label{Eq3.2}
\end{equation}
where $B_m=(b^q_j)_{j,q=0}^m$ is the inverse matrix of $A_m$.

We write a point $(x,\xi)\in{\R}^2\times({\R}^2\setminus\{0\})$ as $(z,\zeta)\in{\C}\times({\C}\setminus\{0\})$, where $z=x_1+\textsl{i}\,x_2$ and $\zeta=\xi_1+\textsl{i}\,\xi_2$.
The momentum ray transform
$$
J^k:{\mathcal S}({\R}^2;S^m{\R}^2)\rightarrow C^\infty\big({\C}\times({\C}\setminus\{0\})\big)
$$
is written as
\begin{equation}
(J^k\!f)(z,\zeta)=\int\limits_{-\infty}^\infty t^k\sum\limits_{j=0}^m
{\check f}_j(z+t\zeta)\,
\xi_1^{m-j}\xi_2^j\,dt\quad (\zeta=\xi_1+\textsl{i}\,\xi_2).
                                         \label{Eq3.3}
\end{equation}
This is written in terms of complex coordinates of $f$ as follows:
\begin{equation}
(J^k\!f)(z,\zeta)=\int\limits_{-\infty}^\infty t^k\sum\limits_{j=0}^m
{\tilde f}_j(z+t\zeta)\,\zeta^{m-j}\bar\zeta^j\,dt.
                                         \label{Eq3.4}
\end{equation}
Indeed, the integrands in \eqref{Eq3.3} and \eqref{Eq3.4} coincide as is seen from \eqref{Eq3.1}--\eqref{Eq3.2}.

Points of $T{\mathbb S}^1$ are uniquely written as $(\textsl{i}\,p\zeta,\zeta)$, where $\zeta\in{\C},|\zeta|=1$ and $p\in\R$. In complex variables, Theorem \ref{Th1.4} is as follows.

\begin{theorem} \label{Th4.1}
Let $m\ge0$. If an $(m+1)$-tuple
$$
(\varphi^0,\dots,\varphi^m)\in\underbrace{{\mathcal S}(T{\mathbb S}^{1})\times\dots\times{\mathcal S}(T{\mathbb S}^{1})}_{m+1}
$$
belongs to the range of the operator
$$
{\mathcal S}({\R}^2;S^m{\R}^2)\rightarrow\underbrace{{\mathcal S}(T{\mathbb S}^{1})\times\dots\times{\mathcal S}(T{\mathbb S}^{1})}_{m+1},\quad
f\mapsto(I^0\!f,\dots,I^m\!f),
$$
then the following conditions are satisfied.

{\rm (1)} $\varphi^k(\textsl{i}\,p\zeta,-\zeta)=(-1)^{m-k}\varphi^k(\textsl{i}\,p\zeta,\zeta)\ (0\le k\le m)$.

{\rm (2)} For every $r=0,1,2,\dots$ and for every $k=0,1,\dots,m$
\begin{equation}
\int\limits_{-\infty}^\infty p^r\,\varphi^k(\textsl{i}\,p\zeta,\zeta)\,dp=P^{rk}(\zeta)
\quad\mbox{for}\quad|\zeta|=1,
                                         \label{Eq3.5}
\end{equation}
where $P^{rk}(\zeta)$ are homogeneous polynomials of degree $r+k+m$ in $(\zeta,\bar\zeta)$.

{\rm (3)}  Polynomials $P^{rk}(\zeta)$ are not independent. They are described by the following construction. For
every pair $(\alpha,\beta)$ of non-negative integers there exists a symmetric $m$-tensor
$\mu^{\alpha\beta}=(\tilde\mu^{\alpha\beta}_j)\in S^m{\R}^2$ such that
\begin{equation}
P^{rk}(\zeta)=\frac{\textsl{i\,}^r}{2^{r+k}}\sum\limits_{j=0}^m\sum\limits_{\alpha=0}^r\sum\limits_{\beta=0}^k
(-1)^\alpha{r\choose\alpha}{k\choose\beta}\tilde\mu^{\alpha+\beta,r+k-\alpha-\beta}_j
\zeta^{m+r+k-j-\alpha-\beta}\bar\zeta^{j+\alpha+\beta}.
                                         \label{Eq3.6}
\end{equation}

Conversely, if functions
$\varphi^k\in{\mathcal S}(T{\mathbb S}^{1})\ (k=0,\dots m)$
satisfy conditions {\rm (1)}--{\rm (3)} with some tensors $\mu^{\alpha\beta}\in S^m{\R}^2$, then there exists a tensor field $f\in{\mathcal S}({\R}^2;S^m{\R}^2)$ such that $(\varphi^0,\dots,\varphi^m)=(I^0\!f,\dots,I^m\!f)$.
\end{theorem}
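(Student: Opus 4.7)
The plan is to treat necessity and sufficiency separately and to lean on the machinery already developed in Section \ref{S2} for Theorem \ref{Th1.3}.

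For necessity, starting from the complex-coordinate expression \eqref{Eq3.4}, condition (1) follows immediately by the substitution $t \mapsto -t$. For (2) and (3), I would compute $\int_{\mathbb{R}} p^r \varphi^k(\textsl{i}\,p\zeta,\zeta)\,dp$ by Fubini, then interpret the resulting integral over $(t,p)$ as a complex integral over $s = t + \textsl{i}\,p$ with $dt\,dp = dA(s)$, and change variable $w = s\zeta$ (which preserves $dA$ when $|\zeta|=1$). Expanding $(\operatorname{Re} s)^k (\operatorname{Im} s)^r$ via the binomial formulas for $(s+\bar s)/2$ and $(s-\bar s)/(2\textsl{i})$, and collecting powers of $\zeta$ and $\bar\zeta$, exhibits the integral as a homogeneous polynomial of degree $r+k+m$ in $(\zeta,\bar\zeta)$ whose coefficients are linear combinations of the complex moments $\tilde\mu^{\alpha\beta}_j := \int_{\mathbb{C}} w^\alpha \bar w^\beta \tilde f_j(w)\,dA(w)$. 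A routine bookkeeping of indices then produces precisely formula \eqref{Eq3.6}, and $\mu^{\alpha\beta}$ is identified as the moment tensor of $f$.

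For sufficiency, I would mimic the structure of Section \ref{S2}. Given functions $\varphi^k$ satisfying (1)--(3), define $\psi^k$ by \eqref{Eq1.14} and the reduced functions $\psi_{i_1\dots i_m}$ by \eqref{Eq2.22} (equivalently \eqref{Eq2.44}). Statements \ref{S2.6}, \ref{S2.8}, \ref{S2.10}, and \ref{S2.11} do not use $n \ge 3$ and carry over to $n=2$ without change; in particular each $\varphi_{i_1\dots i_m} := \psi_{i_1\dots i_m}|_{T\mathbb{S}^1}$ is an even Schwartz function on $T\mathbb{S}^1$. The only place Section \ref{S2} used $n \ge 3$ was in invoking Theorem \ref{Th1.1} to produce scalar components $f_{i_1\dots i_m}$; in the 2D case I would instead apply Theorem \ref{Th1.2} in the scalar case $m=0$. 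Assembling the resulting scalars into a tensor field $f \in \mathcal{S}(\mathbb{R}^2; S^m\mathbb{R}^2)$ and verifying $I^k f = \varphi^k$ then proceeds exactly as in Statement \ref{S2.12}, whose proof uses only Lemma \ref{L2.3} and algebraic identities independent of $n$.

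The main obstacle is therefore to verify the scalar moment condition required by Theorem \ref{Th1.2} (applied with $m=0$), namely that $\int_{-\infty}^\infty p^r \varphi_{i_1\dots i_m}(p\xi^\perp, \xi)\,dp$ is a homogeneous polynomial of degree $r$ in $x$ for every $r \ge 0$. Since $\varphi_{i_1\dots i_m}$ is a fixed differential combination of the $\psi^k$ on $T\mathbb{S}^1$ and each $\psi^k$ is built from $\varphi^0,\dots,\varphi^k$ via \eqref{Eq1.14}, one can express these scalar moments as linear combinations of the moments $\int p^s \varphi^k(p\xi^\perp,\xi)\,dp$ for $0 \le s \le r$ and $0 \le k \le m$, with coefficients polynomial in $\xi$, via repeated integration by parts in $p$. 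Conditions (2) and (3) of the theorem then identify each of these as the polynomial $P^{sk}$ of \eqref{Eq3.6} parameterized by the tensors $\mu^{\alpha\beta}$; the expected combinatorial simplifications should then collapse the final linear combination into a homogeneous polynomial of degree exactly $r$ in $x$, completing the hypothesis of Theorem \ref{Th1.2}. In effect, condition (3) plays the role, in dimension two, of the John system \eqref{Eq1.15} as the right compatibility relation among $(\varphi^0,\dots,\varphi^m)$, and pinning down this correspondence rigorously is the heart of the argument.
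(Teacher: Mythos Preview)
Your necessity argument is correct and essentially identical to the paper's: the change of variable $z=\textsl{i\,}p\zeta+t\zeta$, the binomial expansion, and the identification of $\tilde\mu^{\alpha\beta}_j$ with the complex moments of $\tilde f_j$ are exactly what the paper does.

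For sufficiency, however, you take a genuinely different route from the paper, and the route has a real gap. The paper does \emph{not} port the Section~\ref{S2} machinery to $n=2$; instead it argues by induction on $m$. It first applies Theorem~\ref{Th1.2} in full strength (for rank $m$ tensors) to $\varphi^0$ alone to obtain $g\in\mathcal S(\R^2;S^m\R^2)$ with $I^0g=\varphi^0$, then seeks $f=g+dv$ with $v\in\mathcal S(\R^2;S^{m-1}\R^2)$, reducing via $I^k(dv)=-kI^{k-1}v$ to the same problem at rank $m-1$ for the functions $\chi^k=-\tfrac{1}{k+1}(\varphi^{k+1}-I^{k+1}g)$. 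The technical heart is then purely algebraic: one must show that the difference $P^{r,k+1}(\zeta)-Q^{r,k+1}(\zeta)$ of the moment polynomials of $\varphi$ and of $g$ is divisible by $\zeta\bar\zeta$, i.e.\ representable on $|\zeta|=1$ by a polynomial of degree $m+r+k-1$. This is where condition~(3) is actually used, via the identities $\tilde\mu^{0r}_0=\tilde\nu^{0r}_0$ and $\tilde\mu^{k+1,0}_m=\tilde\nu^{k+1,0}_m$ extracted from $P^{r0}=Q^{r0}$.

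Your proposed alternative---reduce to the scalar case of Theorem~\ref{Th1.2} by verifying GHL moment conditions for each $\varphi_{i_1\dots i_m}$---is a coherent strategy, but the step you flag as ``the heart of the argument'' is not done, and your description of it is not quite right. The formula \eqref{Eq2.44} contains $\partial_\xi$-derivatives of $\psi^k$, and these do \emph{not} restrict to $T\mathbb S^1$ as purely $p$-operations; using the decomposition $\partial_{\xi^i}=\tilde\Xi_i+x_i\l\xi,\partial_x\r+\xi_i\l\xi,\partial_\xi\r$ from \eqref{Eq2.13} one sees that $\int p^r\varphi_{i_1\dots i_m}\,dp$ is a combination of the $P^{sk}(\zeta)$ \emph{and their $\theta$-derivatives}, not merely a linear combination with polynomial coefficients obtained by integrating by parts in $p$. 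Each ingredient is homogeneous of degree $s+k+m$ in $(\zeta,\bar\zeta)$, so to land at degree $r$ you must exhibit a cancellation that strips off a factor of $\zeta\bar\zeta$---which is exactly the same divisibility obstacle the paper confronts, now hidden inside a more complicated object. Without carrying out that cancellation explicitly from \eqref{Eq3.6}, the scalar hypotheses of Theorem~\ref{Th1.2} are not established, and the argument does not close.
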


The equivalence of Theorems \ref{Th1.4} and \ref{Th4.1} is easily proved with the help of \eqref{Eq3.1}--\eqref{Eq3.2}; we omit the details.

\begin{proof}[Proof of the necessity in Theorem \ref{Th4.1}]
For a tensor field $f\in{\mathcal S}({\R}^2;S^m{\R}^2)$, we introduce the {\it complex integral momenta}
\begin{equation}
{\tilde\mu}_j^{\alpha\beta}={\tilde\mu}_j^{\alpha\beta}(f)=
\int\limits_{\C}z^\alpha\bar z^\beta\,{\tilde f}_j(z)\,d\sigma(z).
                                        \label{Eq3.7}
\end{equation}
Here
$
d\sigma(z)=dx_1\wedge dx_2=\frac{\textsl{i}}{2}dz\wedge d\bar z
$
is the area form. We write \eqref{Eq3.4} in the form
$$
\varphi^k(\textsl{i}\,p\zeta,\zeta)=(I^k\!f)(\textsl{i}\,p\zeta,\zeta)=
\int\limits_{-\infty}^\infty t^k\sum\limits_{j=0}^m {\tilde f}_j(\textsl{i}\,p\zeta+t\zeta)\,\zeta^{m-j}\bar\zeta^j\,dt\quad(|\zeta|=1).
$$
Multiply this equality by $p^r$ and integrate with respect to $p$
\begin{equation}
\int\limits_{-\infty}^\infty p^r\,\varphi^k(\textsl{i\,}p\zeta,\zeta)\,dp=
\int\limits_{-\infty}^\infty \int\limits_{-\infty}^\infty p^rt^k\sum\limits_{j=0}^m {\tilde f}_j(\textsl{i\,}p\zeta+t\zeta)\,\zeta^{m-j}\bar\zeta^j\,dtdp.
                                        \label{Eq3.8}
\end{equation}
For a fixed $\zeta$ satisfying $|\zeta|=1$, we change integration variables in \eqref{Eq3.8} as $z=\textsl{i\,}p\zeta+t\zeta$. Then
$$
t=\Re(z\bar\zeta)=\frac{1}{2}(z\bar\zeta+\bar z\zeta),\quad
p=\Im(z\bar\zeta)=\frac{\textsl{i}}{2}(\bar z\zeta-z\bar\zeta),\quad
dt\wedge dp=\frac{\textsl{i}}{2}dz\wedge d\bar z= d\sigma.
$$
After the change, formula \eqref{Eq3.8} becomes
$$
\int\limits_{-\infty}^\infty p^r\,\varphi^k(\textsl{i\,}p\zeta,\zeta)\,dp=\frac{{\textsl{i\,}}^r}{2^{r+k}}
\int\limits_{\C} (\bar z\zeta-z\bar\zeta)^r(z\bar\zeta+\bar z\zeta)^k\sum\limits_{j=0}^m {\tilde f}_j(z)\,\zeta^{m-j}\bar\zeta^j\,d\sigma(z).
$$
Expanding $(\bar z\zeta-z\bar\zeta)^r$ and $(z\bar\zeta+\bar z\zeta)^k$ by Newton's binomial formula, we obtain
$$
\begin{aligned}
\int\limits_{-\infty}^\infty& p^r\,\varphi^k(\textsl{i\,}p\zeta,\zeta)\,dp=\\
&=\frac{\textsl{i\,}^r}{2^{r+k}}
\sum\limits_{j=0}^m\sum\limits_{\alpha=0}^r\sum\limits_{\beta=0}^k(-1)^\alpha{r\choose\alpha}{k\choose\beta}
\zeta^{m+r+k-j-\alpha-\beta}\bar\zeta^{j+\alpha+\beta}
\int\limits_{\C} z^{\alpha+\beta}\bar z^{r+k-\alpha-\beta} {\tilde f}_j(z)\,d\sigma(z).
\end{aligned}
$$
On using \eqref{Eq3.7}, this becomes
$$
\int\limits_{-\infty}^\infty p^r\,\varphi^k(\textsl{i\,}p\zeta,\zeta)\,dp=
\frac{\textsl{i\,}^r}{2^{r+k}}
\sum\limits_{j=0}^m\sum\limits_{\alpha=0}^r\sum\limits_{\beta=0}^k(-1)^\alpha{r\choose\alpha}{k\choose\beta}{\tilde\mu}_j^{\alpha+\beta,r+k-\alpha-\beta}
\zeta^{m+r+k-j-\alpha-\beta}\bar\zeta^{j+\alpha+\beta}.
$$
We write the result in the form
$$
\int\limits_{-\infty}^\infty p^r\,\varphi^k(\textsl{i}p\zeta,\zeta)\,dp= P^{rk}(\zeta)\quad(|\zeta|=1),
$$
where the polynomials $P^{rk}(\zeta)$ are given in \eqref{Eq3.6}.
\end{proof}

\begin{proof}[Proof of the sufficiency in Theorem \ref{Th4.1}]
The proof is by induction on $m$. In the case of $m=0$, Theorem \ref{Th4.1} actually coincides with Theorem \ref{Th1.2} (also for $m=0$).

Let functions $\varphi^k\in{\mathcal S}(T{\mathbb S}^{1})\ (k=0,\dots, m)$
satisfy conditions {\rm (1)}--{\rm (3)} of Theorem \ref{Th4.1} with some tensors $\mu^{\alpha\beta}\in S^m{\R}^2$.
In particular, the function $\varphi^0\in{\mathcal S}(T{\mathbb S}^{1})$ satisfies $\varphi^0(\textsl{i\,}p\zeta,-\zeta)=(-1)^m\varphi^0(\textsl{i\,}p\zeta,\zeta)$ and
\begin{equation}
\int\limits_{-\infty}^\infty p^r\,\varphi^0(\textsl{i\,}p\zeta,\zeta)\,dp =P^{r0}(\zeta)\quad (|\zeta|=1;\ r=0,1,\dots),
                                         \label{Eq3.9}
\end{equation}
where $P^{r0}(\zeta)$ is a homogeneous polynomial of degree $r+m$ in $(\zeta,\bar\zeta)$.
These are just the hypotheses of Theorem \ref{Th1.2}. Applying the theorem, we can state the existence of a tensor field
$g\in{\mathcal S}({\R}^2;S^m{\R}^2)$ such that
\begin{equation}
\varphi^0 =I^0g.
                                         \label{Eq3.10}
\end{equation}
We emphasize that such a tensor field is not unique, it is determined by \eqref{Eq3.10} up to an arbitrary potential field (the definition of a {\it potential tensor field} is presented in \cite{KMSS}, as well as the the definition of the {\it inner derivative} $d$ which is used in the next paragraph). Let us fix some tensor field $g\in{\mathcal S}({\R}^2;S^m{\R}^2)$ satisfying \eqref{Eq3.10}.

Now we look for a tensor field $f\in{\mathcal S}({\R}^2;S^m{\R}^2)$ of the form
\begin{equation}
f =g+dv,
                                         \label{Eq3.11}
\end{equation}
where $v\in{\mathcal S}({\R}^2;S^{m-1}{\R}^2)$. We are looking for a tensor field  $v\in{\mathcal S}({\R}^2;S^{m-1}{\R}^2)$ such that  the tensor field $f$ defined by \eqref{Eq3.11} satisfies
\begin{equation}
I^kf =\varphi^k\quad(k=0,\dots,m).
                                         \label{Eq3.12}
\end{equation}

Recall the relation of the inner derivative $d$ to momentum ray transforms \cite{KMSS}:
$$
I^k(dv) =-kI^{k-1}v.
$$
Together with \eqref{Eq3.11}--\eqref{Eq3.12}, this gives
\begin{equation}
I^kv =-\frac{1}{k+1}(\varphi^{k+1}-I^{k+1}g)\quad(k=0,\dots,m-1).
                                         \label{Eq3.13}
\end{equation}
Introducing the functions
\begin{equation}
\chi^k=-\frac{1}{k+1}(\varphi^{k+1}-I^{k+1}g)\quad(k=0,\dots,m-1),
                                         \label{Eq3.14}
\end{equation}
we write \eqref{Eq3.13} in the form
\begin{equation}
I^kv =\chi^k\quad(k=0,\dots,m-1).
                                         \label{Eq3.15}
\end{equation}

Observe that formula \eqref{Eq3.14} can be taken as the definition of the functions $\chi^k\in{\mathcal S}(T{\mathbb S}^1)\ (k=0,\dots,m-1)$; we do not need to know the tensor field $f$ for the definition. We are going to prove that the functions $(\chi^0,\dots,\chi^{m-1})$ defined by \eqref{Eq3.14} satisfy all hypotheses of Theorem \ref{Th4.1} with $m$ replaced with $m-1$. Then by the induction hypothesis, there exists a tensor field $v\in{\mathcal S}({\R}^2;S^{m-1}{\R}^2)$ satisfying \eqref{Eq3.15}. Given $g$ and $v$, we define the tensor field $f$ by \eqref{Eq3.11}. Inverting the above elementary arguments, we obtain \eqref{Eq3.12}. This would finish the induction step.

By the necessity part of Theorem \ref{Th4.1},
\begin{equation}
\int\limits_{-\infty}^\infty p^r(I^kg)(\textsl{i\,}p\zeta,\zeta)\,dp =Q^{rk}(\zeta)\quad (|\zeta|=1;\ r=0,1,\dots),
                                         \label{Eq3.16}
\end{equation}
where $Q^{rk}(\zeta)$ are homogeneous polynomials of degree $r+k+m$ in $(\zeta,\bar\zeta)$. These polynomials are expressed through the integral momenta
of the field $g$
$$
\tilde\nu_j^{\alpha\beta}=\tilde\nu_j^{\alpha\beta}(g)=
\int\limits_{\C}z^\alpha\bar z^\beta\,\tilde g_j(z)\,d\sigma(z)\quad(j=0,\dots,m)
$$
by
\begin{equation}
Q^{rk}(\zeta)=\frac{\textsl{i\,}^r}{2^{r+k}}\sum\limits_{j=0}^m\sum\limits_{\alpha=0}^r\sum\limits_{\beta=0}^k
(-1)^\alpha{r\choose\alpha}{k\choose\beta}\tilde\nu^{\alpha+\beta,r+k-\alpha-\beta}_j
\zeta^{m+r+k-j-\alpha-\beta}\bar\zeta^{j+\alpha+\beta}.
                                         \label{Eq3.17}
\end{equation}

Equations \eqref{Eq3.5}, \eqref{Eq3.14} and \eqref{Eq3.16} imply
\begin{equation}
\int\limits_{-\infty}^\infty p^r\,\chi^k(\textsl{i}\,p\zeta,\zeta)\,dp= R^{rk}(\zeta)
\quad\mbox{for}\quad|\zeta|=1,\ k=0,\dots,m-1,
                                         \label{Eq3.18}
\end{equation}
where
$$
R^{rk}(\zeta)=-\frac{1}{k+1}\big(P^{r,k+1}(\zeta)-Q^{r,k+1}(\zeta)\big).
$$
According to this equality, $R^{rk}(\zeta)$ is a homogeneous polynomial of degree $r+m+k+1$ in $(\zeta,\bar\zeta)$. But we need to represent the left-hand side of \eqref{Eq3.18} by a homogeneous polynomial of degree $r+m+k-1$ for $|\zeta|=1$. In other words, we have to prove that the difference $P^{r,k+1}(\zeta)-Q^{r,k+1}(\zeta)$ is actually of the form
$$
P^{r,k+1}(\zeta)-Q^{r,k+1}(\zeta)=\zeta\bar\zeta\, S^{rk}(\zeta)
$$
with some homogeneous polynomial $S^{rk}(\zeta)$ of degree $r+m+k-1$. This is the most difficult part of the proof. To prove this fact, we have equations \eqref{Eq3.9}--\eqref{Eq3.10} only.

From \eqref{Eq3.9}--\eqref{Eq3.10}, we see that
$$
P^{r0}(\zeta)=\int\limits_{-\infty}^\infty p^r\,\varphi^0(\textsl{i\,}p\zeta,\zeta)\,dp =
\int\limits_{-\infty}^\infty p^r\,(I^0g)(\textsl{i\,}p\zeta,\zeta)\,dp.
$$
Together with \eqref{Eq3.16}, this gives
\begin{equation}
P^{r0}(\zeta)-Q^{r0}(\zeta)=0\quad (|\zeta|=1;\ r=0,1,\dots).
                                         \label{Eq3.19}
\end{equation}

By \eqref{Eq3.6} and \eqref{Eq3.17},
\begin{equation}
\begin{aligned}
&P^{rk}(\zeta)-Q^{rk}(\zeta)=\\
&=\frac{\textsl{i\,}^r}{2^{r+k}}\sum\limits_{j=0}^m\sum\limits_{\alpha=0}^r\sum\limits_{\beta=0}^k
(-1)^\alpha{r\choose\alpha}{k\choose\beta}(\tilde\mu^{\alpha+\beta,r+k-\alpha-\beta}_j-\tilde\nu^{\alpha+\beta,r+k-\alpha-\beta}_j)
\zeta^{m+r+k-j-\alpha-\beta}\bar\zeta^{j+\alpha+\beta}.
\end{aligned}
                                         \label{Eq3.20}
\end{equation}
From \eqref{Eq3.19} and \eqref{Eq3.20},
$$
\sum\limits_{j=0}^m\sum\limits_{\alpha=0}^r
(-1)^\alpha{r\choose\alpha}(\tilde\mu^{\alpha,r-\alpha}_j-\tilde\nu^{\alpha,r-\alpha}_j)
\zeta^{m+r-j-\alpha}\bar\zeta^{j+\alpha}=0.
$$
Change the summation variable in the inner sum as $s=j+\alpha$
$$
\sum\limits_{j=0}^m\sum\limits_{s=j}^{r+j}
(-1)^{s-j}{r\choose{s-j}}(\tilde\mu^{s-j,r+j-s}_j-\tilde\nu^{s-j,r+j-s}_j)
\zeta^{m+r-s}\bar\zeta^{s}=0.
$$
After changing the order of summations, this becomes
$$
\sum\limits_{s=0}^{r+m}(-1)^s\Big[\sum\limits_{j=\max(0,s-r)}^{\min(s,m)}
(-1)^j{r\choose{s-j}}(\tilde\mu^{s-j,r+j-s}_j-\tilde\nu^{s-j,r+j-s}_j)\Big]
\zeta^{m+r-s}\bar\zeta^{s}=0.
$$
This equality holds identically in $\zeta\in{\mathbb S}^1$. Since the left-hand side is a homogeneous polynomial, all its coefficients must be equal to zero. We have thus obtained
\begin{equation}
\sum\limits_{j=\max(0,s-r)}^{\min(s,m)}
(-1)^j{r\choose{s-j}}(\tilde\mu^{s-j,r+j-s}_j-\tilde\nu^{s-j,r+j-s}_j)=0\quad (r=0,1,\dots;\ 0\le s\le r+m).
                                         \label{Eq3.21}
\end{equation}
In particular, setting $s=0$ in \eqref{Eq3.21}, we have
\begin{equation}
\tilde\mu^{0r}_0-\tilde\nu^{0r}_0=0\quad (r=0,1,\dots).
                                         \label{Eq3.22}
\end{equation}
On the other hand, setting $s=m+k+1$ and $r=k+1$ in \eqref{Eq3.21}, we obtain
\begin{equation}
\tilde\mu^{k+1,0}_m-\tilde\nu^{k+1,0}_m=0\quad (k=0,1,\dots).
                                         \label{Eq3.23}
\end{equation}
In what follows, we will use \eqref{Eq3.22} and \eqref{Eq3.23} only.

Increase $k$ by 1 in \eqref{Eq3.20}
\begin{equation}
\begin{aligned}
&P^{r,k+1}(\zeta)-Q^{r,k+1}(\zeta)=\\
&=\frac{\textsl{i\,}^r}{2^{r\!+\!k\!+\!1}}\sum\limits_{j=0}^m\sum\limits_{\alpha=0}^r\sum\limits_{\beta=0}^{k+1}
(-1)^\alpha{r\choose\alpha}{{k\!+\!1}\choose\beta}(\tilde\mu^{\alpha\!+\!\beta,r\!+\!k\!-\!\alpha\!-\!\beta\!+\!1}_j
-\tilde\nu^{\alpha\!+\!\beta,r\!+k\!-\!\alpha\!-\!\beta\!+\!1}_j)
\zeta^{m\!+\!r\!+\!k\!-\!j\!-\!\alpha\!-\!\beta\!+\!1}\bar\zeta^{j\!+\!\alpha\!+\!\beta}.
\end{aligned}
                                         \label{Eq3.24}
\end{equation}

For two polynomials $P(\zeta)$ and $Q(\zeta)$, let us denote
$$
P\equiv Q\ (\mbox{mod}\,{\mathcal P}^{m+r+k-1})
$$
if the difference $P(\zeta)-Q(\zeta)$ can be represented by a homogeneous polynomial of degree $m+r+k-1$ on the unit circle $|\zeta|=1$.
Observe that
\begin{equation}
\zeta^a\bar\zeta^b\equiv0\ (\mbox{mod}\,{\mathcal P}^{m+r+k-1})\quad\mbox{if}\ a>0,\,b>0,\,a+b=m+r+k+1.
                                         \label{Eq3.25}
\end{equation}

We are going to prove that $P^{r,k+1}-Q^{r,k+1}\equiv0\ (\mbox{mod}\,{\mathcal P}^{m+r+k-1})$. Thus, we can delete all monomials on the right-hand side of \eqref{Eq3.24} which are of the form \eqref{Eq3.25}.

First of all, all summands on the right-hand side of \eqref{Eq3.24} which correspond to $0<\beta<k+1$ are of the form \eqref{Eq3.25}. Deleting such summands, we obtain
\begin{equation}
\begin{aligned}
\textsl{i\,}^{-r}2^{r\!+\!k\!+\!1}&(P^{r,k+1}-Q^{r,k+1})\equiv
\sum\limits_{j=0}^m\sum\limits_{\alpha=0}^r
(-1)^\alpha{r\choose\alpha}(\tilde\mu^{\alpha,r\!+\!k\!-\!\alpha\!+\!1}_j
-\tilde\nu^{\alpha,r\!+k\!-\!\alpha\!+\!1}_j)
\zeta^{m\!+\!r\!+\!k\!-\!j\!-\!\alpha\!+\!1}\bar\zeta^{j\!+\!\alpha}\\
&+\sum\limits_{j=0}^m\sum\limits_{\alpha=0}^r
(-1)^\alpha{r\choose\alpha}(\tilde\mu^{\alpha\!+\!k\!+\!1,r\!-\!\alpha}_j
-\tilde\nu^{\alpha\!+\!k\!+\!1,r\!-\!\alpha}_j)
\zeta^{m\!+\!r\!-\!j\!-\!\alpha}\bar\zeta^{j\!+\!\alpha\!+\!k\!+\!1}\ \ (\mbox{mod}\,{\mathcal P}^{m+r+k-1}).
\end{aligned}
                                         \label{Eq3.26}
\end{equation}
On the right-hand side of \eqref{Eq3.26}, the first line contains terms from \eqref{Eq3.24} which correspond to $\beta=0$, and the second line contains terms from \eqref{Eq3.24} which correspond to $\beta=k+1$.

Second, all summands on the right-hand side of \eqref{Eq3.26} which correspond to $0<\alpha<r$ are of the form \eqref{Eq3.25}. Deleting such summands, we obtain
\begin{equation}
\begin{aligned}
\textsl{i\,}^{-r}2^{r\!+\!k\!+\!1}(P^{r,k+1}-Q^{r,k+1})&\equiv
\sum\limits_{j=0}^m(\tilde\mu^{0,r\!+\!k\!+\!1}_j-\tilde\nu^{0,r\!+k\!+\!1}_j)\zeta^{m\!+\!r\!+\!k\!-\!j\!+\!1}\bar\zeta^{j}\\
&+(-1)^r\sum\limits_{j=0}^m(\tilde\mu^{r,k\!+\!1}_j-\tilde\nu^{r,k\!+\!1}_j)\zeta^{m\!+\!k\!-\!j\!+\!1}\bar\zeta^{j+r}\\
&+\sum\limits_{j=0}^m(\tilde\mu^{k\!+\!1,r}_j-\tilde\nu^{k\!+\!1,r}_j)\zeta^{m\!+\!r\!-\!j}\bar\zeta^{j\!+\!k\!+\!1}\\
&+(-1)^r\sum\limits_{j=0}^m(\tilde\mu^{r\!+\!k\!+\!1,0}_j-\tilde\nu^{r\!+\!k\!+\!1,0}_j)
\zeta^{m\!-\!j}\bar\zeta^{r\!+\!k\!+\!j\!+\!1}\ (\mbox{mod}\,{\mathcal P}^{m+r+k-1}).
\end{aligned}
                                         \label{Eq3.27}
\end{equation}

On the right-hand side of \eqref{Eq3.27}, all summands of the first and second sums which correspond to $j>0$ are of the form \eqref{Eq3.25}. All summands of the third and fourth sums which correspond to $j<m$ are of the form \eqref{Eq3.25}. Deleting such summands, we obtain
\begin{equation}
\begin{aligned}
\textsl{i\,}^{-r}2^{r\!+\!k\!+\!1}&(P^{r,k+1}-Q^{r,k+1})\equiv
(\tilde\mu^{0,r\!+\!k\!+\!1}_0-\tilde\nu^{0,r\!+k\!+\!1}_0)\zeta^{m\!+\!r\!+\!k\!+\!1}
+(-1)^r(\mu^{r,k\!+\!1}_0-\nu^{r,k\!+\!1}_0)\zeta^{m\!+\!k\!+\!1}\bar\zeta^{r}\\
&+(\tilde\mu^{k\!+\!1,r}_m-\tilde\nu^{k\!+\!1,r}_m)\zeta^{r}\bar\zeta^{m\!+\!k\!+\!1}
+(-1)^r(\tilde\mu^{r\!+\!k\!+\!1,0}_m-\tilde\nu^{r\!+\!k\!+\!1,0}_m)
\bar\zeta^{m\!+\!r\!+\!k\!+\!1}\ (\mbox{mod}\,{\mathcal P}^{m+r+k-1}).
\end{aligned}
                                         \label{Eq3.28}
\end{equation}

In the case of $r=0$, \eqref{Eq3.28} looks as follows:
\begin{equation}
2^{k}(P^{0,k+1}-Q^{0,k+1})\equiv
(\tilde\mu^{0,k\!+\!1}_0-\tilde\nu^{0,k\!+\!1}_0)\zeta^{m\!+\!k\!+\!1}
+(\tilde\mu^{k\!+\!1,0}_m-\tilde\nu^{k\!+\!1,0}_m)
\bar\zeta^{m\!+\!k\!+\!1}\ (\mbox{mod}\,{\mathcal P}^{m+k-1}).
                                         \label{Eq3.29}
\end{equation}
In the case of $r>0$, we can delete the second and third terms on the right-hand side of \eqref{Eq3.28} since they are of the form \eqref{Eq3.25}. We thus obtain
\begin{equation}
\left.
\begin{aligned}
\textsl{i\,}^{-r}2^{r\!+\!k\!+\!1}&(P^{r,k+1}-Q^{r,k+1})\equiv
(\tilde\mu^{0,r\!+\!k\!+\!1}_0-\tilde\nu^{0,r\!+k\!+\!1}_0)\zeta^{m\!+\!r\!+\!k\!+\!1}\\
&+(-1)^r(\tilde\mu^{r\!+\!k\!+\!1,0}_m-\tilde\nu^{r\!+\!k\!+\!1,0}_m)
\bar\zeta^{m\!+\!r\!+\!k\!+\!1}\ (\mbox{mod}\,{\mathcal P}^{m+r+k-1})
\end{aligned}
\right\}\quad (r>0).
                                         \label{Eq3.30}
\end{equation}

By \eqref{Eq3.22}--\eqref{Eq3.23}, right-hand sides of \eqref{Eq3.29} and \eqref{Eq3.30} are equal to zero. Thus
$$
P^{r,k+1}-Q^{r,k+1}\equiv0\ (\mbox{mod}\,{\mathcal P}^{m+r+k-1})
\quad (r=0,1,\dots).
$$
This finishes the induction step.
\end{proof}

\end{document}